\newcommand{\commentout}[1]{}
\newcommand{\R}{\mathbb{R}}
\newcommand{\N}{\mathbb{N}}
\newcommand{\Z}{\mathbb{Z}}
\newcommand {\Chi} {{\bf \raise 2pt \hbox{$\chi$}} }
\newcommand {\cae} { {\mathcal E} }
\newcommand {\f}   {\frac}
\newcommand {\p}   {\partial} 
\newcommand {\ov}  {\overline}
\newcommand{\beq}{\begin{equation}}
\newcommand{\eeq}{\end{equation}}
\newcommand{\bea} {\begin{array}{rl}}
\newcommand{\eea} {\end{array}}
\newcommand{\bepa}{\left\{ \begin{array}{l}}
\newcommand{\eepa} {\end{array}\right.}
\newcommand{\norm}[1]{\left\lVert#1\right\rVert}
\newcommand{\abs}[1]{\left\lvert#1\right\rvert}
\newcommand*{\dd}{\mathop{\kern0pt\mathrm{d}}\!{}}
\newcommand{\scal}[2]{\left(#1 , #2\right)}
\newcommand{\lscal}[2]{\left(#1 , #2\right)}
\newcommand\restr[2]{{
  \left.\kern-\nulldelimiterspace 
  #1 
  \vphantom{\big|} 
  \right|_{#2} 
  }}
\newcommand{\kk}{{k+1}}
\newcommand{\kud}{{k+1/2}}
\newlength\eqnspace
\newtheorem{theorem}{Theorem}[section]
\newtheorem{lemma}[theorem]{Lemma}
\newtheorem{remark}[theorem]{Remark}
\newtheorem{proposition}[theorem]{Proposition}
 \newcommand{\AP}[1]{{{#1}}}
\numberwithin{equation}{section}
\newcommand{\qed}{{ \hfill
                       {\unskip\kern 6pt\penalty 500 \raise -2pt\hbox{\vrule\vbox to 6pt{\hrule width 6pt
                       \vfill\hrule}\vrule} \par}   }}
\title{Convergence, error analysis and longtime behavior of the Scalar Auxiliary Variable method for the nonlinear Schrödinger equation}
\date{\today}
\author{ Alexandre Poulain \thanks{Sorbonne Université, CNRS, Université de Paris, Inria, Laboratoire Jacques-Louis Lions (LJLL), F-75005 Paris, France.  email: {alexandre.poulain@sorbonne-universite.fr}} 
 \thanks{The author has received funding from the European Research Council (ERC) under the European Union's Horizon 2020 research and innovation programme (grant agreement No 740623)}
 \and 
 Katharina Schratz \thanks{Sorbonne Université, CNRS, Université de Paris, Laboratoire Jacques-Louis Lions (LJLL), F-75005 Paris, France.  email: {katharina.schratz@sorbonne-universite.fr}} 
   \thanks{The author has received funding from the European Research Council (ERC) under the European Union's Horizon 2020 research and innovation programme (grant agreement No 850941)}
}
\begin{document}
\maketitle
\pagestyle{plain}
\pagenumbering{arabic}

\begin{abstract} 
We carry out the convergence analysis of the Scalar Auxiliary Variable (SAV) method applied to the nonlinear Schr\"odinger equation which preserves a modified Hamiltonian on the discrete level. We derive  a weak and strong convergence result, establish  second-order global error bounds and  present long time error estimates on the modified Hamiltonian. In addition, we illustrate the favorable  energy conservation of the SAV method compared to classical splitting schemes in certain applications.

\end{abstract} 
\vskip .7cm

\noindent{\makebox[1in]\hrulefill}\newline
2010 \textit{Mathematics Subject Classification. 35Q55, 65M70, 65M12, 65M15} 
\newline\textit{Keywords and phrases. Nonlinear Schr\"odinger equation;  Scalar Auxiliary Variable; Pseudospectral method, Error bounds.} 
%

\section{Introduction}
\begin{sloppypar}
We consider the  Gross-Pitaevskii \cite{gross-struct-1961} equation (NLS) set on the $d$-dimensional torus ${\Omega = \mathbb{T}^d = (\R/2\pi\Z)^d}$ (where $d\le 3$)
\begin{equation}
    i\p_t u(t,x) = -\Delta u(t,x) + V(x)u(t,x) +  f\left(\abs{u(t,x)}^2\right) u(t,x), \quad t\in(0,T]  
    \label{eq:schro}
\end{equation}
with initial conditions $u(0,x)=u^0(x)$, a real-valued interaction potential $V(x)$  and nonlinearity $f(\abs{u}^2)$. 
\end{sloppypar}
The Hamiltonian energy associated to  equation \eqref{eq:schro} takes the form
\begin{equation*}
    H(u,\ov u) = \f12 \int_{\Omega} \left(\abs{\nabla u}^2 + V(x)\abs{u}^2 + F\left(\abs{u}^2\right) \right) \dd x,
\end{equation*}
where $F\left(\abs{u}^2\right)$ is defined by $F'\left(\abs{u}^2\right) =  f\left( \abs{u}^2 \right) $. Note that the Hamiltonian $H(u(t),\ov u(t))$ as well as the probability density $\norm{u(t,\cdot)}^2_{L^2(\Omega)}$ is preserved by the system \eqref{eq:schro}.

In the following we will denote by  $ \cae_1$ the sum of the nonlinear and potential part of the Hamiltonian  
\[
    \cae_1 =  \f12 \int_{\Omega} V(x)\abs{u}^2 +  F\left(\abs{u}^2\right) \dd x.
\]

Using the decomposition $u(t,x) = p(t,x) + i q(t,x) $,   equation \eqref{eq:schro} can be furthermore rewritten as the Hamiltonian system

\begin{equation}
    \begin{cases}
        \p_t p &= -\Delta q + \frac{\delta \cae_1[t]}{\delta q},\\
        \p_t q &= \Delta p - \frac{\delta \cae_1[t]}{\delta p},
    \end{cases}
    \label{eq:SAV-ham}
\end{equation}
with the associated Hamiltonian
\begin{equation}
    H(p,q) = \f12\int_\Omega \abs{\nabla p}^2+ \abs{\nabla q}^2 + V(x)\left(\abs{p}^2+\abs{q}^2\right) + F\left(\abs{p}^2,\abs{q}^2\right) \dd x.
    \label{eq:hamiltonian-energy}
\end{equation}
In this notation, $ \cae_1$  takes the form
\[
    \cae_1 =  \f12 \int_{\Omega} V(x)\left(\abs{p}^2+\abs{q}^2\right) + F\left(\abs{p}^2,\abs{q}^2\right) \dd x.
\]

Due to their importance in numerous applications, reaching from Bose-Einstein condensation over nonlinear optics up to plasma physics, nonlinear Schr\"odinger equations  are nowadays very well studied numerically. In the last decades a large variety of different numerical schemes has been proposed~\cite{Bao-Cai,Antoine-Computational, Bao-numerical,Gonzales,Gonzales-coupled}.
Thanks to their simplicity and accuracy, a popular choice thereby lies in so-called splitting methods, where the right hand side of \eqref{eq:schro} is split into the linear and nonlinear part, respectively, see, e.g.,  \cite{Bao-time-2002,Besse-order-2002,Bao-nonlinear-2007} and the references therein. The  popularity of splitting methods also stems from their structure preservation. They conserve exactly the $L^2$ norm of the solution and  allow for near energy conservation over long times, see, e.g., \cite{Faou-geometric-2012}. 
However, in \cite{schratz-low-2018} the authors show that  in certain applications  splitting methods   suffer from severe order reduction such as  in case of  non-linearities with non-integer exponents. The latter arises for instance in context of optical dark and power law solitons with surface plasmonic interactions \cite{crutcher_derivation_2011}. As a solution to that issue, the authors proposed in  \cite{schratz-low-2018} a new class of low regularity exponential-type integrators for NLS. 
In this article we use a different approach based on the so-called  Scalar Auxiliary Variable (SAV) method which was originally proposed   to design structure-preserving numerical schemes for gradient flows \cite{shen_2018_sav,shen_new_2019}. Very recently it also became popular in context of  Hamiltonian systems  \cite{antoine-SAV-2020,fu-structure-2019,cai-structure-2019,Feng}. The main advantage of the SAV method lies in the fact that it preserves a modified Hamiltonian on the discrete level. Due to its generality, it can be applied to a large class of equations involving any kind of nonlinearity. The resulting numerical schemes are linearly implicit and allow for efficient calculations.

{
The main idea behind the SAV method is to introduce a scalar variable $r(t)=\sqrt{\cae_1+ \cae_c}$ that will become an unknown at the discrete level and where the arbitrary constant $\cae_c >0$ is used to obtain $\cae_1+ \cae_c>0$. We must stress that one as to be very careful with the choice of the constant $\cae_c$. Indeed, it is well known that even for the cubic non-linearity, \textit{i.e.} $f(\abs{p}^2,\abs{q}^2) = \beta \abs{p^2+q^2}^2$ with $\beta<0$ (focussing NLSE), the hamiltonian energy~\eqref{eq:hamiltonian-energy} is not bounded from below a priori. In the following analysis, we implicit assume that it exists a constant $\cae_c$ such that $\cae_1+ \cae_c>0$, which is often the case in the study of Bose-Einstein condensate as pointed out by Antoine \textit{et al.} \cite{antoine-SAV-2020}. In practice, we compute the term $\cae_1$ explicitly and therefore one can adapt the constant $\cae_c$ during the simulation. 
}
The system is supplemented by an equation describing the time evolution of $r(t)$. 
In case of the nonlinear Schr\"odinger equation \eqref{eq:schro} the continuous SAV model takes the form
\begin{equation}
    \begin{cases}
        \p_t p &= -\Delta q + r(t) g_1(p,q),\\
        \p_t q &= \Delta p - r(t)g_2(p,q), \\
        \p_t r(t)&= \frac{1}{2} \left[\scal{g_1(p,q)}{\p_t q} + \scal{g_2(p,q)}{\p_t p} \right],
    \end{cases}
    \label{eq:SAV-schro}
\end{equation}
where $\scal{\cdot}{\cdot}$ denotes the standard $L^2$ scalar product and
\[
    g_1(p,q) = \frac{1}{\sqrt{\cae_1[t]+\cae_c }} \frac{\delta \cae_1[t]}{\delta q}, \quad   g_2(p,q) = \frac{1}{\sqrt{\cae_1[t]+\cae_c}} \frac{\delta \cae_1[t]}{\delta p}.
\]
Associated to this SAV model we find the Hamiltonian 
\begin{equation*}
    \tilde{H}(p,q) = \f12\int_\Omega \abs{\nabla p}^2+ \abs{\nabla q}^2 \dd x + \abs{r}^2,
\end{equation*}
which is conserved by the SAV model \eqref{eq:SAV-schro}. In the following, we assume that for $i=1,2$ 
\begin{equation}
    \begin{aligned}
    \abs{g_i'(p,q)} \le C\left((\abs{p}+\abs{q})^\beta +1\right),\quad
    \abs{g_i''(p,q)} \le C\left((\abs{p}+\abs{q})^{\beta'} +1\right),
    \end{aligned}
    \label{eq:assum-bound-g'}
\end{equation}
for some $\beta,\beta' >0$.

\begin{remark}
In this paper, we focus on the Gross-pitaevskii equation under the form \eqref{eq:schro}.
Even though the choice of the nonlinearity and, therefore, the precise form of $\cae_1$, depends on the structure of the considered Schr\"odinger equation, we highlight that the SAV scheme is, in its design, general enough to work for a large number of applications. Indeed, as long  as there exists a constant $\cae_c$ such that for all times $t\ge 0$ it holds that $\cae_1 + \cae_c >0$ we can apply the SAV method. Therefore, modifications such as the effect of dipole-dipole interactions, rotating GPE (see Antoine \textit{et al.}~\cite{antoine-SAV-2020}), or even time dependent potentials $V = V(t,x)$ can be taken into account. For an extensive overview on applications and generalisations of the nonlinear Schr\"odinger equation, we refer the  interested reader to the review article of Bao and Cai~\cite{Bao-Cai} and the references therein. 
\end{remark}

Following the works of Antoine \textit{et al.} \cite{antoine-SAV-2020} and Fu \textit{et al.} \cite{fu-structure-2019}, we  analyze a fully discrete SAV scheme for the nonlinear Schr\"odinger equation \eqref{eq:schro}  based on a   Crank-Nicholson time discretization of the NLS SAV model \eqref{eq:SAV-schro} coupled with a pseudo-spectral discretization for the spatial discretization.  Energy conservation properties of the SAV method for nonlinear Schr\"odinger equations were recently derived in  \cite{antoine-SAV-2020,fu-structure-2019} and their  convergence was   extensively tested numerically. 
{ {Very recently}, Feng \textit{et al.}~\cite{Feng} use the SAV method to design arbitrary high order space-time finite element scheme for the nonlinear Schr\"odinger equation. While their method uses a finite element discretization in space, we propose in this work to use a Fourier pseudospectral discretization. 
}
The  main contribution of this article lies in establishing global error estimates on the fully discrete Fourier-PseudoSpectral Crank-Nicholson NLS SAV scheme (CN-SAV-SP in short).  More precisely, we derive weak and strong convergence and prove  second order error estimates for the fully discrete scheme.   Our theoretical convergence analysis is inspired by the analysis of the SAV method in the context of gradient flows \cite{shen_2018_convergence}. We   underline our convergence results with numerical experiments and  compare  the SAV scheme with classical splitting methods. 
Our numerical findings suggest that in certain cases, such as in case of non-linearities involving a non-integer exponent, the SAV scheme preserves its second order energy conservation property while classical splitting methods suffer from sever order reduction. We also conduct numerical experiments showing that the SAV scheme is able to compute correctly ground states of Bose-Einstein condensates.\vskip0.2cm 

{\bf Outline of the paper.}
 In the first part of the paper, we carry out a fully discrete error analysis of the SAV scheme and establish second order convergence estimates, see Theorem \ref{thm:main}. Our theoretical convergence results are then numerically underlined in the second part of the paper, see Section \ref{sec:num}.  \vskip0.2cm
 
 {\bf Notations.}
 Let $L^p(\Omega)$, $W^{m,p}(\Omega)$ with $H^m(\Omega) = W^{m,2}(\Omega)$, where $1 \le p \le +\infty$ and $m \in  \mathbb{N}$, denote the standard Lebesgue and Sobolev spaces equipped with the corresponding  norms  $||\cdot||_{m,p}$, $||\cdot||_{m}$ and semi-norms $|\cdot|_{m,p}$, $|\cdot|_{m}$. We also denote by $H^m_p(\Omega)$ the subset of $H^m(\Omega)$ that consists of $2\pi$-periodic functions that are in $H^m(\Omega)$.  
We denote by $L^p\left(0,T;V\right)$ the Bochner spaces i.e. the spaces with values in Sobolev spaces \cite{adams-sobolev-1975}. The norm in these spaces is defined for all Bochner measurable functions $\eta$ by
\[
    \norm{\eta}_{L^p(0,T;V)} = \left(\int_0^T \norm{\eta}^p_V \, \dd t\right)^{1/p}, \qquad
    \norm{\eta}_{L^\infty(0,T;V)} = \text{ess} \sup_{t\in (0,T)} \norm{\eta}_V. 
\]
The standard $L^2$ inner product is denoted by $(\cdot,\cdot)_\Omega$ and the duality pairing between ${\left(H^1(\Omega) \right)' = H^{-1}(\Omega)}$ and $H^1(\Omega)$ by $<\cdot,\cdot>_\Omega$. The dual space $H^{-1}(\Omega)$ is endowed with the norm 
\[
    \norm{\phi}_{H^{-1}(\Omega)} = \sup_{\eta\in H^1(\Omega)}\{ <\phi,\eta>_\Omega,\quad \norm{\eta}^2_{1} \le 1  \}.
\]

\begin{remark}
Even though our model problem~\eqref{eq:schro} is equipped with periodic boundary conditions, our analysis holds for homogeneous Dirichlet or Neumann boundary conditions.
\end{remark}

\section{Numerical scheme}
\label{sec:schemes}

\subsection{Time and space discretisation of the SAV model}

{
We use a standard Fourier pseudospectral method~\cite{Quarteroni-spectral-2006, Fornberg, Gottlieb} for the spatial  discretization of the SAV model  \eqref{eq:SAV-schro}. We refer the reader to the book of Trefethen~\cite{Trefethen} for details of the implementation of such scheme in MATLAB.
We emphasize that our paper presents numerical simulation in dimension $d=1$. However, the method can  be adapted to higher dimensions. Our convergence and error analysis holds in  dimensions $1\le d\le3$.
}

Thereby, for the sake of clarity, we here give the details of the space discretization for $d=1$. We denote by  $X_N$ the space spanned by the trigonometric functions up to degree $N/2$ 
\begin{equation*}
    X_N := \text{span}\{e^{ikx/L}: -N/2\le k \le N/2-1 \}.
\end{equation*}
For the time discretisation of the SAV system  \eqref{eq:SAV-schro}  we   apply a Crank-Nicholson discretisation  with  time step $\tau$ such that $t^k = k\tau$ for $k\in \N$. 
At each grid point we thereby approximate the time derivative by 
\[
    \p_t u(t^{k+1},x) \approx \f{u(t^{k+1},x) - u(t^{k},x)}{\tau}. 
\]

Let us give the details of the approximation in dimension $d=1$, where the domain is defined by $\Omega = [-\pi, \pi]$ with a mesh size $h$. In this case the collocation points are $x_a = \f{2\pi a}{N}$ where $a\in \mathcal B$ with 
\[
    \mathcal{B} := \begin{cases}
            \{-P,\dots,P-1\}\quad \text{if}\quad N=2P \quad \text{is even},\\
            \{-P,\dots,P\}\quad \text{if}\quad N=2P+1 \quad \text{is odd}.
    \end{cases}  
\]
We denote by $U^k(x_a)$ the approximation of $u(t^k,x_a)$. The Fourier pseudo-spectral discretization is given by
\[
    U^k(x_a) = \sum_{p\in \mathcal B} \hat{u}_p^k \exp\left(2i \pi a p / N\right)
\]
with the Fourier coefficients  defined by 
\[
    \hat{u}_p^k = \f{1}{N} \sum_{b\in \mathcal B} U^k(x_b) \exp\left(- 2i \pi b p / N\right). 
\]

We approximate the Laplacian by the Fourier differentiation matrix $D^{(2)}$ which  for $j,l=0,\dots,N-1$ takes the form  
\[
    \left(D^{(2)}\right)_{jl} = \begin{cases}
        \f14 (-1)^{j+1}N+\f{(-1)^{j+l+1}}{2\sin^2\left( \f{(j-l)\pi}{N}\right)}, \quad &\text{if}\quad j\neq l\\
        -\f{(N-1)(N-2)}{12}, \quad &\text{otherwise}.
    \end{cases}
\]

\AP{
However, to avoid the need to use the symmetric matrix $D^{(2)}$ in the previous form and gain in computational time, it can be preferable to use the method proposed in \cite{antoine-SAV-2020} that uses the fact that the previous differentiation matrix is diagonal in Fourier space. Therefore, inverting this matrix has a very low cost. In our work, since we use the Hamiltonian system~\eqref{eq:SAV-ham} to analyze the properties of the SAV scheme we will use the previously defined differentiation matrix $D^{(2)}$.   
}


For the $N$ collocation points $x_a$, we define the interpolation operation $I_N$  by
\begin{equation*}
    (I_N u)(x) = \sum_{p\in \mathcal{B}} \tilde u_p e^{2 i \pi x p/N }.
\end{equation*}
We have the following interpolation error (see Section 5.8.1 in \cite{Quarteroni-spectral-2006}):
\begin{lemma}[Interpolation error]
    \label{lem:interp-error}
    For any $u\in C(0,T;H^m_p(\Omega)$ with $d\le3$, we have
    \begin{equation*}
        \begin{cases}
            \norm{I_N u - u }_{H^l_p(\Omega))}\le C N^{l-m}\abs{u}_m,\quad 0\le l \le m,\\
            \norm{I_N \p_t u - \p_t u }_{H^l_p(\Omega))}\le C N^{l-m}\abs{\p_t u}_m,\quad 0\le l \le m.
        \end{cases}
    \end{equation*}
\end{lemma}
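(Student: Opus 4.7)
The plan is to follow the standard Fourier-analysis argument for trigonometric interpolation, as laid out in Section 5.8.1 of Quarteroni-Sacco-Valli \cite{Quarteroni-spectral-2006}. The starting point is the decomposition of the interpolation error through the orthogonal $L^2$ projection $P_N : L^2(\Omega) \to X_N$:
$$
I_N u - u = (I_N u - P_N u) + (P_N u - u).
$$
The virtue of this splitting is that the two pieces are controlled by different mechanisms: plain tail truncation of the Fourier series for the second, and the aliasing identity for the first. In each case the $H^l_p$-norm is characterised by $\|v\|_{H^l_p}^2 \sim \sum_k (1+|k|^2)^l |\hat v_k|^2$, which turns everything into a bookkeeping exercise on Fourier coefficients.

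For the truncation term, Parseval's identity gives
$$
\|P_N u - u\|_{H^l_p}^2 = \sum_{|k|_\infty > N/2} (1+|k|^2)^l |\hat u_k|^2 \le C N^{2(l-m)} \sum_{|k|_\infty > N/2} (1+|k|^2)^m |\hat u_k|^2 \le C N^{2(l-m)} |u|_m^2,
$$
where I use $l \le m$ and $(1+|k|^2)^{l-m} \le C N^{2(l-m)}$ on the tail $|k|_\infty > N/2$.

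For the aliasing term, I would invoke the Poisson-type identity $\tilde u_p = \hat u_p + \sum_{j \in \Z^d \setminus \{0\}} \hat u_{p+jN}$ for $|p|_\infty \le N/2$, so that $I_N u - P_N u = \sum_{|p|_\infty \le N/2} \bigl(\sum_{j \ne 0} \hat u_{p+jN}\bigr) e^{ip \cdot x}$. Since this function lives in the finite-dimensional space $X_N$, the inverse inequality $\|v\|_{H^l_p} \le C N^l \|v\|_{L^2}$ reduces the $H^l$-bound to an $L^2$-bound on the aliasing sum; a Cauchy-Schwarz argument of the form
$$
\Bigl|\sum_{j \ne 0} \hat u_{p+jN}\Bigr|^2 \le \Bigl(\sum_{j \ne 0} (1+|p+jN|^2)^{-m}\Bigr) \Bigl(\sum_{j \ne 0} (1+|p+jN|^2)^m |\hat u_{p+jN}|^2\Bigr)
$$
combined with summation over $|p|_\infty \le N/2$ then yields the bound $C N^{l-m} |u|_m$ after observing that the first factor is $O(N^{-2m})$ uniformly in $p$ when $2m > d$ (which holds for $d \le 3$ and $m$ large enough).

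The second inequality is immediate: the interpolation operator $I_N$ acts only in the spatial variable and therefore commutes with $\partial_t$, so applying the first bound pointwise in time to $\partial_t u(t,\cdot) \in H^m_p(\Omega)$ gives the stated estimate. The main technical step is the aliasing estimate of the previous paragraph; the dimension restriction $d \le 3$ and the smoothness hypothesis ensure that the tail sums $\sum_{j\ne 0}(1+|p+jN|^2)^{-m}$ converge with the correct $N^{-2m}$ scaling, and all constants depend on $d$ and $m$ but not on $N$.
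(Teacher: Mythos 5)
The paper offers no proof of this lemma at all---it simply points to Section 5.8.1 of the cited spectral-methods monograph---and your argument is precisely the standard truncation-plus-aliasing proof found there (split through the $L^2$ projection $P_N$, Parseval on the tail, Poisson-type aliasing identity plus Cauchy--Schwarz on $I_Nu-P_Nu$), so it is correct and matches the intended source. The only point worth making explicit is that the aliasing identity and the convergence of $\sum_{j\ne 0}(1+|p+jN|^2)^{-m}$ require $m>d/2$, a hypothesis the lemma leaves implicit but which you correctly identify as the reason for the dimension/smoothness restriction.
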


\subsection{The fully discrete SAV scheme}

Applying the time discretization described in the previous section, for $k=0\to N_T$, the semi-discrete model of \eqref{eq:SAV-schro} reads
\begin{equation}
    \begin{cases}
        \frac{p^{k+1}-p^k}{\tau} &= -\Delta q^{k+1/2} + r^{k+1/2}\tilde g_1^{k+1/2}, \\
        \frac{q^{k+1}-q^k}{\tau} &= \Delta p^{k+1/2} - r^{k+1/2} \tilde g_2^{k+1/2}, \\
        {r^{k+1}-r^k} &= \frac{1}{2} \left[\scal{\tilde g_1^{k+1/2}}{q^{k+1}-q^k} + \scal{\tilde g_2^{k+1/2}}{p^{k+1}-p^k} \right],
    \end{cases}
    \label{eq:SAV-schro-discr}
\end{equation}
where $\phi^{k+1/2} = \left(\phi^{k+1}+\phi^k\right)/2$ and $\tilde g_i^{k+1/2}$ is a second order extrapolation of $g_i$ at time $t=t^{k+1/2}$.

Denoting by capital letters the vectors of unknowns $P^k,Q^k$ that are approximations at each collocation nodes of the continuous (in space) unknowns $p^k,q^k$, the fully discrete space-time scheme then takes the form
\begin{equation}
    \begin{cases}
        \frac{P^{k+1}-P^k}{\tau} &= -D^{(2)} Q^{k+1/2} + R^{k+1/2}\tilde G_1^{k+1/2}, \\
        \frac{Q^{k+1}-Q^k}{\tau} &= D^{(2)} P^{k+1/2} - R^{k+1/2} \tilde G_2^{k+1/2}, \\
        {r^{k+1}-r^k} &= \frac{1}{2} \left[\scal{\tilde G_1^{k+1/2}}{Q^{k+1}-Q^k} + \scal{\tilde G_2^{k+1/2}}{P^{k+1}-P^k} \right],
    \end{cases}
    \label{eq:SAV-schro-fulldiscr}
\end{equation}
where $\tilde G_1, \tilde G_2$ are the vectors associated to the functions $\tilde g_1$ and $\tilde g_2$.

Let us now present two algorithms for the efficient solution of  the fully discrete SAV system \eqref{eq:SAV-schro-fulldiscr}. 
The two methods are equivalent and reduce the problem to the solving of two linear systems that involves only real variables.
\subsubsection*{Algorithm 1}
 The algorithm below was originally proposed for solving  the fully discrete SAV system arising in gradient flows \cite{shen_2018_sav}.
 
Let us give the procedure on how to solve the system \eqref{eq:SAV-schro-fulldiscr}. First, we need to replace $r^{k+1}$ in the first two equations using the third equation. This yields that 
\[
    \begin{cases}
        \left(P^\kk-P^k \right) &= -\tau D^{(2)} \f{(Q^{k+1}+Q^k)}{2} + \tau\left(r^k + \f14 \left[\scal{\tilde{G}_1^{k+1/2}}{Q^{k+1}-Q^k}+\scal{\tilde G^{k+1/2}_2}{P^\kk-P^k} \right] \right)\tilde G_1^{k+1/2},\\
        \left(Q^\kk-Q^k\right) &= \tau D^{(2)}\f{\left(P^\kk-P^k \right)}{2} - \tau \left(r^k + \f14 \left[\scal{\tilde{G}_1^{k+1/2}}{Q^{k+1}-Q^k}+\scal{\tilde G^{k+1/2}_2}{P^\kk-P^k} \right] \right)\tilde G_2^{k+1/2}.
    \end{cases}
\]
Next we set
\[
    Z^k = \begin{pmatrix}
        P^k\\
        Q^k
    \end{pmatrix}, \tilde{G}^{k+1/2} = \begin{pmatrix}
        \tilde{G}_2^{k+1/2}\\
        \tilde{G}_1^{k+1/2}
    \end{pmatrix}  , \tilde{B}^{k+1/2} = \begin{pmatrix}
        -\tilde{G}_1^{k+1/2}\\
        \tilde{G}_2^{k+1/2}
    \end{pmatrix}.
\]
This allows us to rewrite the system into a matrix form 
\begin{equation}
    A Z^{k+1} + \f\tau4\scal{\tilde{G}^{k+1/2}}{Z^{k+1}}\tilde{B}^{k+1/2} = C^k, 
    \label{eq:discrete-matrix-form}
\end{equation}
where 
\[
    A = \begin{bmatrix}
        I & \f\tau2 D^{(2)}\\
        -\f{\tau}{2}D^{(2)} & I 
    \end{bmatrix}, \quad \text{and} \quad C^k=  \begin{pmatrix}
        I&-\f\tau2 D^{(2)} \\
        \f\tau2 D^{(2)}&I
    \end{pmatrix}Z^k-\tau r^k\tilde{B}^{k+1/2} + \f\tau4 \scal{\tilde{G}^{k+1/2}}{Z^k}\tilde{B}^{k+1/2},
\]
with $I$ the identity matrix. 
Multiplying \eqref{eq:discrete-matrix-form} by $A^{-1}$ and taking the discrete inner product with $\tilde G^{k+1/2}$, we finally obtain 
\begin{equation}
    \scal{\tilde{G}^{k+1/2}}{Z^{k+1}} = \f{\scal{\tilde G^{k+1/2}}{A^{-1}C^k}}{1+\f\tau4 \scal{\tilde G^{k+1/2}}{A^{-1}\tilde B^{k+1/2}}}.
    \label{eq:scal-g-sav}
\end{equation}   
Then, knowing $\scal{\tilde{G}^{k+1/2}}{Z^{k+1}}$, $Z^{k+1}$ is computed using \eqref{eq:discrete-matrix-form} and $r^{k+1}$ is calculated from the third equation of \eqref{eq:SAV-schro-fulldiscr}.
Therefore, solving the fully discrete SAV model \eqref{eq:SAV-schro-fulldiscr} reduces to solving the linear system constituted by the equations \eqref{eq:scal-g-sav} and \eqref{eq:discrete-matrix-form}. 

\subsubsection*{Algorithm 2}
Below we describe a second algorithm recently proposed in  \cite{antoine-SAV-2020} for the numerical solution of the fully discrete NLS SAV scheme \eqref{eq:SAV-schro-fulldiscr}. Rewriting the scheme in its matrix form we have 
\begin{equation}
    \begin{cases}
        \f{Z^{k+1}-Z^k}{\tau} &= -J Z^{k+1/2} - r^{k+1/2} \tilde{B}^{k+1/2},\\
        {r^{k+1}-r^k} &= \frac{1}{2} \left[\scal{\tilde G_1^{k+1/2}}{Q^{k+1}-Q^k} + \scal{\tilde G_2^{k+1/2}}{P^{k+1}-P^k} \right],
    \end{cases}
    \label{eq:discr-system-compact}
\end{equation}
where 
\[
    J=\begin{bmatrix}
        0&D^{(2)}\\ -D^{(2)}&0
    \end{bmatrix}  .
\]
Using the decomposition 
\begin{equation}
    Z^{k+1/2} =  Z^{k+1/2}_1 + r^{k+1/2} Z^{k+1/2}_2,   
    \label{eq:decomp}
\end{equation}
and adding $\f2\tau Z^k$ on both sides of the first equation of \eqref{eq:discr-system-compact}, we furthermore obtain that
\begin{equation}
    \f2\tau\left[Z^{\kud}+r^\kud Z^\kud_2\right] = \f2\tau Z^k - J \left[Z_1^\kud+r^\kud Z_2^\kud \right]-r^\kud\tilde B^\kud.
    \label{eq:decomp-sys}
\end{equation}
Applying the same decomposition to the second equation of \eqref{eq:discr-system-compact} and adding $2r^k$ on both sides, we get 
\begin{equation}
    2r^\kud = 2r^k + \scal{\tilde{G}^\kud}{\left[Z_1^\kud+r^\kud Z_2^\kud\right]-Z^k}.
    \label{eq:equation-r-decomp}
\end{equation}
    
Hence, denoting by $I$ the identity matrix, we first solve the equation \eqref{eq:decomp-sys} using the system 
\begin{equation}
\begin{cases}
    \left[\f2\tau I +J\right]Z_1^\kud = \f2\tau Z^k\\
    \left[\f2\tau I +J\right]Z_2^\kud = - \tilde B^\kud.
\end{cases}    
\label{eq:discrete-algo-2}
\end{equation}
Then we compute $r^\kud$ by solving  equation \eqref{eq:equation-r-decomp} which yields that
\begin{equation*}
    r^\kud = \f{2 r^k+\scal{\tilde G^\kud}{Z^\kud_1-Z^k}}{2-\scal{\tilde{G}^\kud}{Z_2^\kud}}.
\end{equation*}
From the decomposition \eqref{eq:decomp} we get $Z^\kud$ from which we compute $Z^{k+1}$ and $r^{k+1}$.

\AP{
\begin{remark}
    Since in practice the computation and storage of the invert of a non-diagonal matrix has to be avoided, Algorithm 2 is a preferable choice. Indeed, the main step in Algorithm 2 lies in  solving   two decoupled linear equations~\eqref{eq:discrete-algo-2}. To do so, standard tools of linear systems can be applied such as matrix-free preconditioned Krylov solvers. We refer to Appendix C in \cite{Shen-spectral} for a description of iterative solvers of linear system and preconditioning.
\end{remark}
Even though the previous remark already highlights the main advantage of Algorithm 2, we emphasize that the inversion of the main matrix in Algorithm 1 can be carried out efficiently. 
\begin{remark}
    Referring to \cite{shen_2018_sav}, we remark that the inversion of the matrix $A$ in the first algorithm and the matrix  $ \left[\f2\tau +J\right]$ in the second Algorithm can be carried out efficiently using the Sherman-Morrison-Woodbury formula \cite{Golub-matrix-2013}   
    \[
        (A+UV^{T})^{-1} = A^{-1}-A^{-1}U(I+V^TA^{-1}U)^{-1}V^TA^{-1},   
    \]
    where $A$ is a $n\times n$ and $U,V$ are $n\times k$ matrices, and $I$ is the $k\times k$ identity matrix. 
\end{remark}
\begin{remark}
    Referring to \cite{fu-structure-2019}, a fast solver for solving  the linear system~\eqref{eq:discrete-matrix-form} (in Algorithm~1) and~\eqref{eq:discrete-algo-2} (in Algorithm 2) exists. It uses the fact that the differentiation matrix $D^{(2)}$ can be decomposed into $D^{(2)} = F^{-1} \Lambda F$ where $F$ and $F^{-1}$ are the corresponding matrices for the discrete Fourier transformation and $\Lambda$ is a diagonal matrix with eigenvalues of $D^{(2)}$ as its entries. 
    Therefore, the matrix $A$ from Equation~\eqref{eq:discrete-matrix-form} admits the decomposition 
    \[
        A = F^{-1} M F, \quad\text{with}\quad M = \begin{bmatrix} I & \f{\tau}{2} \Lambda\\ - \f{\tau}{2} \Lambda & I \end{bmatrix}.
    \]
    We note that a similar decomposition exists for the matrix $[\f{2}{\tau}I +J]$ in equation~\eqref{eq:discrete-algo-2}. Thanks to the above decomposition, the inverse of the matrix $A$ can be computed explicitly in an efficient manner since
    \[
        A^{-1} = F^{-1} M^{-1} F,\quad \text{and}\quad M^{-1} = M^T \begin{bmatrix}(I+\f{\tau^2}{4}\Lambda^2)^{-1}& 0 \\
        0 & (I+\f{\tau^2}{4}\Lambda^2)^{-1}\end{bmatrix},
    \]
    where $(I+\f{\tau^2}{4}\Lambda^2)$ is a diagonal matrix, such that its inverse is fast to compute. 
\end{remark}
}

\section{Conservation properties and inequalities}
\label{sec:apriori}
\AP{
In this section we outline the  conserved quantities of the SAV method. It is well known that due to its design the SAV scheme preserves a modified version of the underlying Hamiltonian. In addition, to the conservation of energy, there is a wide variety of properties in the continuous equation which is feasible to preserve also on the numerical (discrete) level, we refer to  Bao and Cai~\cite{Bao-Cai} as well as Antoine \textit{et al.}~\cite{Antoine-Computational}: \textit{i)}  \textit{time-reversibility or symmetry}, \textit{i.e.} the system is unchanged when $\tau\to -\tau$, \textit{ii)}  \textit{gauge-invariance}, \textit{i.e.} if the potential $V$ is changed such that $V\to V+\alpha$ with $\alpha$ a real constant then the density $\abs{u}^2$ remains unchanged, \textit{iii)} conservation of mass, \textit{i.e.} $\norm{u(t)}_{L^2(\Omega)} = \norm{u(0)}_{L^2(\Omega)}$, and the Hamiltonian energy, \textit{i.e.} $H(t) = H(0)$, \textit{iv)} preservation of the dispersion relation
\[
    \omega(k) = \frac{\abs{k}^2}{2} + f(\abs{A}^2),
\]
for the plane wave solutions $u(t,x) = Ae^{i k\cdot x-\omega t}$.
}

\AP{
Proving analytically that the SAV scheme for the NLS equation meets the points $ii)$ and $iv)$ (over long time scales) is up to our knowledge not possible with current techniques. However, the other points can be verified for a large number of nonlinearities. 
Here, we briefly recall the proofs of the conservation  properties  and refer to \cite{fu-structure-2019,antoine-SAV-2020}, where they have been first set in context of nonlinear Schr\"odinger equations and our Theorems~\ref{th:preservation-ham} and~\ref{th:mass-cons} are found by a combination of the results from~\cite{fu-structure-2019} and~\cite{antoine-SAV-2020}.
}
\begin{theorem}[Conservation of the modified discrete energy]
    \label{th:preservation-ham}
    The scheme \eqref{eq:SAV-schro-discr} is associated to the discrete modified Hamiltonian
    \begin{equation}
        \tilde{H}^{k+1}= \f12\left(\abs{ Q^{k+1}}^2_1  + \abs{ P^{k+1}}^2_1 \right) + \abs{r^{k+1}}^2, 
        \label{eq:discrete-ham}
    \end{equation}
    and conserves the modified Hamiltonian energy through time i.e.
    \begin{equation}
        \tilde{H}^{k+1} = \tilde{H}^k.
        \label{eq:pres-ham}
    \end{equation}
\end{theorem}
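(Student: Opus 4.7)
The plan is to verify the discrete identity $\tilde H^{k+1}=\tilde H^k$ by a direct algebraic manipulation that mimics the continuous chain rule. Starting from
\[
\tilde H^{k+1}-\tilde H^k=\tfrac12\bigl(|P^{k+1}|_1^2-|P^k|_1^2\bigr)+\tfrac12\bigl(|Q^{k+1}|_1^2-|Q^k|_1^2\bigr)+\bigl(|r^{k+1}|^2-|r^k|^2\bigr),
\]
I would apply the elementary factorizations $a^2-b^2=(a+b)(a-b)$ to each piece. For the seminorms this gives, after a periodic integration by parts (justified by the boundary conditions),
\[
\tfrac12\bigl(|X^{k+1}|_1^2-|X^k|_1^2\bigr)=\scal{\nabla X^{k+1/2}}{\nabla(X^{k+1}-X^k)}=-\scal{\Delta X^{k+1/2}}{X^{k+1}-X^k}
\]
for $X\in\{P,Q\}$, while $|r^{k+1}|^2-|r^k|^2=2r^{k+1/2}(r^{k+1}-r^k)$.

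Next I would substitute the three equations of \eqref{eq:SAV-schro-discr} into these three difference terms. The gradient part produces
\[
-\tau\scal{\Delta P^{k+1/2}}{-\Delta Q^{k+1/2}+r^{k+1/2}\tilde g_1^{k+1/2}}-\tau\scal{\Delta Q^{k+1/2}}{\Delta P^{k+1/2}-r^{k+1/2}\tilde g_2^{k+1/2}},
\]
in which the two cross terms $\pm\tau\scal{\Delta P^{k+1/2}}{\Delta Q^{k+1/2}}$ cancel, leaving exactly
\[
-\tau r^{k+1/2}\Bigl[\scal{\Delta P^{k+1/2}}{\tilde g_1^{k+1/2}}-\scal{\Delta Q^{k+1/2}}{\tilde g_2^{k+1/2}}\Bigr].
\]
For the $r$-contribution, using the third equation and substituting the scheme for $P^{k+1}-P^k$ and $Q^{k+1}-Q^k$ produces the same bracket with the opposite sign, together with a symmetric pair $\mp\tau r^{k+1/2}r^{k+1/2}\scal{\tilde g_1^{k+1/2}}{\tilde g_2^{k+1/2}}$ that cancels by commutativity of the $L^2$ inner product. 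Summing the two pieces yields $\tilde H^{k+1}-\tilde H^k=0$.

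Conceptually there is no real obstacle here: the structure of the scheme is engineered so that everything telescopes. The only thing that needs care is the bookkeeping of signs and the choice of testing each equation against the correct midpoint quantity ($\Delta P^{k+1/2}$, $\Delta Q^{k+1/2}$, and $r^{k+1/2}$ respectively), which is precisely what the symmetric Crank--Nicolson discretization together with the discrete chain-rule form of the $r$-equation is designed to make work. One small point worth stating explicitly is that the periodic setting (or Dirichlet/Neumann as in the remark) is used to eliminate boundary terms in the integration by parts; for the fully discrete counterpart with capital letters $P^k,Q^k$ the analogous identity holds because the Fourier differentiation matrix $D^{(2)}$ is symmetric, so the discrete analogue of integration by parts $(D^{(2)}X,Y)=(X,D^{(2)}Y)$ gives the same cancellation structure.
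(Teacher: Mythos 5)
Your proof is correct and is essentially the paper's own argument: both rest on the identities $\f12\left(\abs{X^{k+1}}_1^2-\abs{X^k}_1^2\right)=-\scal{\Delta X^{k+1/2}}{X^{k+1}-X^k}$ and $\abs{r^{k+1}}^2-\abs{r^k}^2=2r^{k+1/2}(r^{k+1}-r^k)$, the skew-symmetric cancellation of the cross terms, and the third equation tying the $\tilde g$-pairings to the increment of $r$. The only (harmless) difference is organizational: the paper tests the first two equations against $Q^{k+1}-Q^k$ and $-(P^{k+1}-P^k)$ and then identifies the resulting bracket directly with $2(r^{k+1}-r^k)$, whereas you substitute the scheme into every difference, which produces the extra $\scal{\tilde g_1^{k+1/2}}{\tilde g_2^{k+1/2}}$ terms that you then correctly cancel by symmetry of the inner product.
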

\begin{proof}
    Taking the inner product with $Q^{k+1}-Q^k$ for the first equation of \eqref{eq:SAV-schro-fulldiscr} and for the second with $-\left(P^{k+1}-P^k\right)$, then summing the results we get
    \[
        0 = \f12\left(\abs{Q^{k+1}}^2_1 - \abs{Q^k}^2_1 + \abs{P^{k+1}}^2_1 - \abs{P^k}^2_1 \right) +r^{k+1/2} \left[\scal{\tilde G_1^{k+1/2}}{Q^{k+1}-Q^k} + \scal{\tilde G_2^{k+1/2}}{P^{k+1}-P^k} \right],
    \]   
    where $\abs{\cdot}_1 = \norm{\nabla \cdot}_0$ is the $H^1-$seminorm. 
    Then, multiplying the third equation of \eqref{eq:SAV-schro-fulldiscr} by $2 R^{k+1/2}$ and using the result in the previous equation, we obtain 
    \[
        0 = \f12\left(\abs{ Q^{k+1}}^2_1 - \abs{ Q^k}^2_1 + \abs{ P^{k+1}}^2_1 - \abs{ P^k}^2_1 \right) +\left( \abs{r^{k+1}}^2 - \abs{r^k}^2\right),
    \]  
    from which we can conclude both \eqref{eq:discrete-ham} and \eqref{eq:pres-ham}.
    \hfill \qed
\end{proof}

The SAV scheme also preserves the mass up to an error of order $ \mathcal{O}(\tau^3)$, where the latter error is introduced by the second-order extrapolation.
\begin{theorem}[Conservation of the $L^2$ norm]
    The scheme \eqref{eq:SAV-schro} conserves the $L^2$ norm of the solution up to an order $ \mathcal{O}(\tau^3)$ i.e.
    \begin{equation}
        \norm{U^{k+1}}^2_{0} = \norm{U^{k}}^2_{0} + \mathcal{O}(\tau^3),
        \label{eq:conservation-mass}
    \end{equation}
    with $U^k = P^k +i Q^k$.
    \label{th:mass-cons}
\end{theorem}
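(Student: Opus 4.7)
The plan is to reproduce the algebraic manipulations of the proof of Theorem~\ref{th:preservation-ham}, but now testing the first two equations of~\eqref{eq:SAV-schro-fulldiscr} against $\tau(P^{k+1}+P^k)$ and $\tau(Q^{k+1}+Q^k)$ rather than against the increments. This choice produces telescoping differences of $L^2$ norms instead of $H^1$ seminorms. After summation, the two Laplacian contributions $\pm 2\tau(D^{(2)}Q^{k+1/2}, P^{k+1/2})$ cancel by the symmetry of $D^{(2)}$, leaving
\begin{equation*}
    \|U^{k+1}\|_0^2 - \|U^k\|_0^2 \;=\; 2\tau\, r^{k+1/2}\left[(\tilde G_1^{k+1/2},\, P^{k+1/2}) - (\tilde G_2^{k+1/2},\, Q^{k+1/2})\right],
\end{equation*}
since $\|U^{k+1}\|_0^2 = \|P^{k+1}\|_0^2 + \|Q^{k+1}\|_0^2$. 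The claim thus reduces to bounding the bracket on the right by $\mathcal{O}(\tau^2)$.

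To do so I would exploit the structural form of $g_1$ and $g_2$. Both carry the common prefactor $(\cae_1+\cae_c)^{-1/2}$, and one computes $\delta \cae_1/\delta q = (V + f(|u|^2))\,q$ and $\delta \cae_1/\delta p = (V + f(|u|^2))\,p$. Consequently, whenever $g_1$ and $g_2$ are evaluated on the same state, the pointwise identity $g_1(p,q)\,p \equiv g_2(p,q)\,q$ holds at every grid point, and in particular
\begin{equation*}
    \bigl(g_1(P^{k+1/2}, Q^{k+1/2}),\, P^{k+1/2}\bigr) - \bigl(g_2(P^{k+1/2}, Q^{k+1/2}),\, Q^{k+1/2}\bigr) = 0.
\end{equation*}
The scheme, however, uses the second-order time extrapolant $\tilde g_i^{k+1/2}$. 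Writing $\tilde g_i^{k+1/2} = g_i(P^{k+1/2}, Q^{k+1/2}) + \tau^2 \rho_i^{k+1/2}$ via a Taylor expansion in $\tau$—justified by the growth bounds~\eqref{eq:assum-bound-g'} on $g_i'$ and $g_i''$ together with the $H^1$ a priori bounds on $(P^k, Q^k)$ inherited from Theorem~\ref{th:preservation-ham}—one obtains
\begin{equation*}
    (\tilde G_1^{k+1/2},\, P^{k+1/2}) - (\tilde G_2^{k+1/2},\, Q^{k+1/2}) = \tau^2\left[(\rho_1^{k+1/2},\, P^{k+1/2}) - (\rho_2^{k+1/2},\, Q^{k+1/2})\right] = \mathcal{O}(\tau^2).
\end{equation*}

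To close the argument, observe that Theorem~\ref{th:preservation-ham} also yields $|r^{k+1/2}|^2 \le 2\tilde H^0$, so $r^{k+1/2}$ is controlled uniformly in $k$ and independently of $\tau$. Multiplying the bracket bound by $2\tau\, r^{k+1/2}$ then produces $\|U^{k+1}\|_0^2 - \|U^k\|_0^2 = \mathcal{O}(\tau^3)$, which is the stated near-conservation property. The delicate step is the Taylor-expansion estimate for the extrapolation error: turning it into a fully rigorous $\mathcal{O}(\tau^2)$ bound requires $L^\infty$-in-time control of the discrete time differences of $g_i(P^k, Q^k)$, which in a self-contained treatment is most naturally obtained by coupling this identity with the global error analysis summarised in Theorem~\ref{thm:main}.
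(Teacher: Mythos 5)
Your proposal is correct and follows essentially the same route as the paper: testing the first two equations against $2\tau P^{k+1/2}$ and $2\tau Q^{k+1/2}$, cancelling the Laplacian contributions by the symmetry of $D^{(2)}$, exploiting the structural identity $g_1(p,q)\,p \equiv g_2(p,q)\,q$ (both $g_i$ being the same scalar field times $q$ and $p$ respectively), and attributing the residual $\mathcal{O}(\tau^2)$ to the second-order extrapolation error before multiplying by $\tau$. Your closing remark that a fully rigorous bound on the extrapolation error requires additional control of the discrete solution is a fair observation; the paper simply invokes the second-order accuracy of $\tilde G_i^{k+1/2}$ without elaborating.
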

\begin{proof}
    Taking the inner product of first equation of \eqref{eq:SAV-schro-fulldiscr} with $2P^{k+1/2}$, the second equation with $2Q^{k+1/2}$, and summing the two we get 
    \begin{equation*}
        \f{1}{\tau} \left(\norm{P^{k+1}}_0^2 - \norm{P^{k}}_0^2 + \norm{Q^{k+1}}_0^2 - \norm{Q^{k}}_0^2\right) = 2 r^{k+1/2}\left(-\scal{\tilde G_2^{k+1/2}}{Q^{k+1/2}} +\scal{\tilde G_1^{k+1/2}}{P^{k+1/2}}  \right).   
    \end{equation*}
    
    Since $\tilde G_i^{k+1/2}$ is a second-order approximation of $G_i^{k+1/2}$, we can write
    \begin{equation*}
        \f 1{\tau} \left(\norm{U^{k+1}}_0^2 - \norm{U^{k}}_0^2 \right) = 2 r^{k+1/2}\left(-\scal{G_2^{k+1/2}}{Q^{k+1/2}}     +\scal{ G_1^{k+1/2}}{P^{k+1/2}}  \right) + \mathcal{O}(\tau^2).
    \end{equation*}
    \AP{
    Then, we find that
    \[
    \begin{aligned}
        G_1^{k+1/2} &= \f{1}{\sqrt{\cae_1\left(P^{k+1/2},Q^{k+1/2} \right) + \cae_c}} \f{\p \cae_1\left(P^{k+1/2},Q^{k+1/2}\right)}{\p Q^{k+1/2}} \\
        &= V(x) Q^{k+1/2} + f\left(\abs{P^{k+1/2}}^2,\abs{Q^{k+1/2}}^2\right)Q^{k+1/2},
    \end{aligned}
    \]
    and
    \[
    \begin{aligned}
        G_2^{k+1/2} &= \f{1}{\sqrt{\cae_1\left(P^{k+1/2},Q^{k+1/2} \right) + \cae_c}} \f{\p \cae_1\left(P^{k+1/2},Q^{k+1/2}\right)}{\p P^{k+1/2}} \\
        &= V(x) P^{k+1/2} + f\left(\abs{P^{k+1/2}}^2,\abs{Q^{k+1/2}}^2\right)P^{k+1/2},
    \end{aligned}
    \]
    from which we easily obtain
    \[
        -\scal{G_2^{k+1/2}}{Q^{k+1/2}}    +\scal{G_1^{k+1/2}}{P^{k+1/2}}=0.    
    \]
    Consequently, we obtain \eqref{eq:conservation-mass}.
    }
    \hfill \qed
\end{proof}
\AP{
To derive $H^2$-bound for the solution of the SAV scheme, we use the following proposition. The proof of this technical result can be found in Lemma 2.3 in \cite{shen_2018_convergence}. 
\begin{proposition}[Bound for $\norm{\nabla G_i^{k+1/2}}_0$]
\label{prop:bound-nablag}
    Assume that the functions $g_i$ (i=1,2) satisfy \eqref{eq:assum-bound-g'} and let  $\norm{U}_1 \le {M}$ for some constant $M>0$. Then there exists $0\le \sigma<1$ such that 
    \begin{equation}
        \norm{ \nabla G_i^{k+1/2}} \le C(M)\left(1+\norm{\nabla\Delta P^{k+1/2}}_0^{2\sigma}+\norm{\nabla\Delta Q^{k+1/2}}_0^{2\sigma}\right).
    \end{equation}
\end{proposition}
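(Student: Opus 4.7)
The plan is to convert a pointwise chain-rule bound on $\nabla G_i^{k+1/2}$ into an $L^2$ estimate by H\"older's inequality and Gagliardo--Nirenberg interpolation. The low-order norms are controlled by the $H^1$-bound $\norm{U}_1\le M$, while $\norm{\nabla\Delta P^{k+1/2}}_0$ and $\norm{\nabla\Delta Q^{k+1/2}}_0$ enter with sub-critical exponents thanks to the Sobolev embeddings available in $d\le 3$. This mirrors the strategy used in Lemma 2.3 of \cite{shen_2018_convergence} for the SAV gradient-flow setting.

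First, by the chain rule
\[
\nabla G_i^{k+1/2} = (\partial_p g_i)(P^{k+1/2},Q^{k+1/2})\,\nabla P^{k+1/2} + (\partial_q g_i)(P^{k+1/2},Q^{k+1/2})\,\nabla Q^{k+1/2},
\]
and the growth assumption \eqref{eq:assum-bound-g'} gives the pointwise bound $|\nabla G_i^{k+1/2}|\le C\bigl((|P^{k+1/2}|+|Q^{k+1/2}|)^\beta+1\bigr)\bigl(|\nabla P^{k+1/2}|+|\nabla Q^{k+1/2}|\bigr)$. Taking $L^2$ norms and applying H\"older's inequality with exponents $s,s'$ satisfying $\tfrac{1}{s}+\tfrac{1}{s'}=\tfrac{1}{2}$ separates a zero-order factor depending on $P^{k+1/2},Q^{k+1/2}$ from a first-order factor depending on $\nabla P^{k+1/2},\nabla Q^{k+1/2}$; the additive lower-order remainder is absorbed into $C(M)$ via $\norm{U}_1\le M$.

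Second, I would interpolate each factor between a norm controlled by $M$ and the third-order norm $\norm{\nabla\Delta P^{k+1/2}}_0$ (resp.\ $\norm{\nabla\Delta Q^{k+1/2}}_0$). By Gagliardo--Nirenberg in $d\le 3$,
\[
\norm{P^{k+1/2}}_{L^{\beta s}} \le C\,\norm{P^{k+1/2}}_{0}^{1-\theta_1}\,\norm{\nabla\Delta P^{k+1/2}}_0^{\theta_1}, \qquad \norm{\nabla P^{k+1/2}}_{L^{s'}} \le C\,\norm{\nabla P^{k+1/2}}_0^{1-\theta_2}\,\norm{\nabla\Delta P^{k+1/2}}_0^{\theta_2},
\]
with analogous bounds for $Q^{k+1/2}$; intermediate $H^j$ norms of $P^{k+1/2}$ that appear along the way are themselves controlled by interpolation between $H^1$ and $H^3$, hence by $C(M)$ plus a strictly lower power of $\norm{\nabla\Delta P^{k+1/2}}_0$. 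Combining these estimates leads to
\[
\norm{\nabla G_i^{k+1/2}}_0 \le C(M)\bigl(1+\norm{\nabla\Delta P^{k+1/2}}_0^{\beta\theta_1+\theta_2}+\norm{\nabla\Delta Q^{k+1/2}}_0^{\beta\theta_1+\theta_2}\bigr),
\]
and one sets $2\sigma:=\beta\theta_1+\theta_2$.

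The main obstacle is to verify that $\sigma<1$ can actually be achieved by an appropriate choice of the H\"older index $s$. This reduces to an elementary one-parameter optimization: the Gagliardo--Nirenberg scaling relations fix $\theta_1=\tfrac{d}{6}\bigl(1-\tfrac{2}{\beta s}\bigr)$ and $\theta_2=\tfrac{d}{4}\bigl(1-\tfrac{2}{s'}\bigr)$, both monotone in $s$, so that as $s\to 2^{+}$ one has $\theta_1\to 0$ while $\theta_2$ stays bounded away from $d/4\le 3/4$. Hence for $d\le 3$ an admissible $s\in(2,\infty)$ can always be chosen so that $\beta\theta_1+\theta_2<2$, which yields $\sigma\in[0,1)$ as required. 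Finally, since the scheme uses a second-order extrapolation $\tilde G_i^{k+1/2}$ rather than $G_i^{k+1/2}$ itself, one needs the analogous bound at the previous time levels and a triangle-inequality transfer, which only modifies constants and does not affect the admissible exponent $\sigma$.
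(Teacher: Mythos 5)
The paper does not actually prove this proposition---it is imported verbatim from Lemma~2.3 of \cite{shen_2018_convergence}---so your reconstruction via the chain rule, H\"older's inequality and Gagliardo--Nirenberg interpolation is the right \emph{type} of argument. The setup (pointwise bound from \eqref{eq:assum-bound-g'}, splitting off the zero-order factor, interpolating against $\norm{\nabla\Delta P^{k+1/2}}_0$, and the closing remark about transferring the bound to the extrapolant $\tilde G_i^{k+1/2}$) is all sound.

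The gap is in the exponent bookkeeping of your last step, which is precisely the step the lemma hinges on. With your own scaling relations, $\theta_1=\frac{d}{3}\left(\frac12-\frac{1}{\beta s}\right)$ and $\theta_2=\frac{d}{2s}$, so that $\beta\theta_1+\theta_2=\frac{d}{6}\left(\beta+\frac1s\right)$. Two things go wrong: (i) the claim that $\theta_1\to 0$ as $s\to 2^{+}$ is false unless $\beta\le 1$; the limit is $\frac{d}{6}\left(1-\frac1\beta\right)$, and $\theta_1$ vanishes only when $\beta s\le 2$, which for $\beta>1$ forces $s<2$ and is incompatible with the H\"older pairing $\frac1s+\frac1{s'}=\frac12$; (ii) the combined exponent $\frac{d}{6}\left(\beta+\frac1s\right)$ is, up to the harmless $\frac1s$ term, independent of $s$, and it reaches $2$ already at $\beta\approx 4$ when $d=3$, so ``an admissible $s$ can always be chosen'' is not true. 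The repair is to interpolate the zero-order factor between $H^1$ (which is what the hypothesis $\norm{U}_1\le M$ controls) and $H^3$, rather than between $L^2$ and $H^3$: for $d\le 2$ this gives $\norm{P^{k+1/2}}_{L^{\beta s}}\le C(M)$ for every finite $\beta s$ and leaves only $\theta_2=\frac{d}{2s}$, which can be made arbitrarily small, while for $d=3$ the total exponent becomes $\frac{\beta}{4}$ for every admissible $s$, i.e.\ $\sigma=\frac{\beta}{8}<1$ only under the growth restriction $\beta<8$. Such a restriction on $\beta$ in dimension three is needed (and is present in the cited Lemma~2.3 of \cite{shen_2018_convergence}); without it neither your choice of interpolation nor any other closes the argument, so the statement as written, with ``some $\beta>0$'' unconstrained, cannot be established this way.
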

}

\AP{
We have the following result on the $H^2$-norm of $P^{k+1/2}$ and $Q^{k+1/2}$.
\begin{proposition}[$H^2$-bound on the numerical solution]
\label{prop:H2bound}
    The solution $\{P^\kk,Q^\kk\}$ of \eqref{eq:SAV-schro-discr} satisfies 
    \begin{equation}
        \max_{k=1,\dots,N_T-1} \norm{\Delta P^\kk}_{0}^2 + \norm{\Delta Q^\kk}_{0}^2 \le CT + \norm{\Delta P^0}_{0}^2 + \norm{\Delta Q^0}_{0}^2 .
        \label{eq:H2-estim-num}
    \end{equation}
\end{proposition}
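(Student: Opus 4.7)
The plan is to upgrade the energy–conservation calculation of Theorem~\ref{th:preservation-ham} by one spatial derivative. Concretely, I would take the $L^2$ inner product of the first equation of \eqref{eq:SAV-schro-discr} with $-\Delta(Q^{k+1}-Q^k)$ and of the second with $\Delta(P^{k+1}-P^k)$, then add. Integrating by parts, the cross term coming from the discrete time derivatives cancels by the skew-symmetric coupling (exactly as in the proof of \eqref{eq:pres-ham}) and the $\Delta$-coupling telescopes. After one further integration by parts on the nonlinear contributions this yields the identity
\begin{equation*}
\|\Delta P^{k+1}\|_0^2+\|\Delta Q^{k+1}\|_0^2 = \|\Delta P^k\|_0^2+\|\Delta Q^k\|_0^2 - 2 r^{k+1/2}\bigl[(\nabla\tilde G_1^{k+1/2},\nabla(Q^{k+1}-Q^k)) + (\nabla\tilde G_2^{k+1/2},\nabla(P^{k+1}-P^k))\bigr].
\end{equation*}

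The core task is then to bound the nonlinear remainder by something of order $\tau$, so that summation over $k=0,\dots,N_T-1$ produces the $CT$ factor in \eqref{eq:H2-estim-num}. I would first invoke Theorem~\ref{th:preservation-ham} to treat $|r^{k+1/2}|$, $\|\nabla P^k\|_0$ and $\|\nabla Q^k\|_0$ as uniformly bounded by $\tilde H^0$, so that in particular $\|U^k\|_1 \le M$ and Proposition~\ref{prop:bound-nablag} applies to $\tilde G_i^{k+1/2}$. Substituting the first two equations of \eqref{eq:SAV-schro-discr} into $\nabla(P^{k+1}-P^k)$ and $\nabla(Q^{k+1}-Q^k)$ extracts one explicit factor of $\tau$ and rewrites the differences in terms of $\nabla \Delta P^{k+1/2}$, $\nabla\Delta Q^{k+1/2}$ and $r^{k+1/2}\nabla\tilde G_i^{k+1/2}$. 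Combining Cauchy–Schwarz with the bound of Proposition~\ref{prop:bound-nablag}, each resulting product is then controlled by $\tau$ times a sub-quadratic expression of $\|\Delta P^{k+1/2}\|_0$ and $\|\Delta Q^{k+1/2}\|_0$, where ``sub-quadratic'' is precisely where the restriction $\sigma<1$ matters.

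With Young's inequality I can absorb the genuinely high-order pieces into a small multiple of $\|\Delta P^{k+1}\|_0^2+\|\Delta Q^{k+1}\|_0^2$ on the left and reduce the identity to
\begin{equation*}
\|\Delta P^{k+1}\|_0^2+\|\Delta Q^{k+1}\|_0^2 \le \|\Delta P^k\|_0^2+\|\Delta Q^k\|_0^2 + C\tau\bigl(1+\|\Delta P^{k+1/2}\|_0^2+\|\Delta Q^{k+1/2}\|_0^2\bigr),
\end{equation*}
at which point a standard discrete Gr\"onwall lemma combined with $N_T\tau\le T$ delivers \eqref{eq:H2-estim-num}.

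The step I expect to be most delicate is the handling of the nonlinear term: the bound of Proposition~\ref{prop:bound-nablag} involves a seminorm of a formally higher Sobolev order than the $H^2$ quantity being estimated, and the only mechanism that keeps the argument self-contained is the combination of the explicit factor $\tau$ extracted from the discrete equations and the strictness of $\sigma<1$, which together allow the Young split to close the estimate into the $\|\Delta P^{k+1/2}\|_0^2+\|\Delta Q^{k+1/2}\|_0^2$ already present in the identity. The uniform control of $r^{k+1/2}$ and of the $H^1$ seminorms inherited from Theorem~\ref{th:preservation-ham} is essential for the constants to be independent of $k$, but it is the interplay with Proposition~\ref{prop:bound-nablag} that carries the actual $H^2$ bound.
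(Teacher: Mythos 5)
Your telescoping identity is correct and is, up to the choice of test functions, equivalent to the second half of the paper's own argument (the paper pairs the equations with $\Delta^2 P^{k+1/2}$ and $\Delta^2 Q^{k+1/2}$; testing with $-\Delta(Q^{k+1}-Q^k)$ and $\Delta(P^{k+1}-P^k)$ gives the same relation). The gap is in how you propose to close the nonlinear remainder. After substituting the scheme into $\nabla(P^{k+1}-P^k)$ and $\nabla(Q^{k+1}-Q^k)$ you are left with terms of the form $\tau\, r^{k+1/2}\scal{\nabla\tilde G_i^{k+1/2}}{\nabla\Delta P^{k+1/2}}$, and Proposition~\ref{prop:bound-nablag} bounds $\norm{\nabla \tilde G_i^{k+1/2}}$ by $C\bigl(1+\norm{\nabla\Delta P^{k+1/2}}_0^{2\sigma}+\norm{\nabla\Delta Q^{k+1/2}}_0^{2\sigma}\bigr)$. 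The resulting product is therefore of size $\norm{\nabla\Delta P^{k+1/2}}_0^{1+2\sigma}$, i.e., it lives at the level of the $H^3$ seminorm, not the $H^2$ norm. No Young split, with or without the explicit factor $\tau$ and the strictness $\sigma<1$, can dominate $\norm{\nabla\Delta P^{k+1/2}}_0$ by $\norm{\Delta P^{k+1/2}}_0$: these are seminorms of different order, and an inverse inequality would cost a factor of $N$ and destroy the estimate. Hence the differential inequality you feed into Gr\"onwall does not follow from what precedes it.

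What is missing is a separate, prior estimate of the $H^3$-type quantity itself. The paper first tests the two equations with $\Delta^2 Q^{k+1/2}$ and $\Delta^2 P^{k+1/2}$ in the pairing for which the Laplacian terms produce $+\norm{\nabla\Delta Q^{k+1/2}}_0^2+\norm{\nabla\Delta P^{k+1/2}}_0^2$ on the left-hand side; there the condition $\sigma<1$ in Proposition~\ref{prop:bound-nablag} is precisely what lets Young's inequality absorb $\norm{\nabla\tilde G_i^{k+1/2}}_0^2$ into an $\epsilon$-fraction of those same $H^3$ seminorms, yielding the uniform bound \eqref{eq:boundnabladelta}, namely $\norm{\nabla\Delta Q^{k+1/2}}_0^2+\norm{\nabla\Delta P^{k+1/2}}_0^2\le C$. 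Only with this bound in hand does the right-hand side of your telescoping identity become $\le C\tau$, after which summation over $k=0,\dots,N_T-1$ gives \eqref{eq:H2-estim-num}. In short: the role of $\sigma<1$ is to close an estimate at the $H^3$ level against itself, not to push an $H^3$ quantity down to the $H^2$ level, and without that preliminary step your argument does not close.
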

\begin{proof}
    First, we multiply the first equation of \eqref{eq:SAV-schro-discr} by $\Delta^2 (Q^{k+1/2})$, the second equation by $\Delta^2 (P^{k+1/2})$ and integrate over $\Omega$. Then, by summing the two, we obtain,  after integration by parts, that
    \[
        \begin{aligned}
          \norm{\nabla \Delta Q^{k+1/2}}^2_0 +  \norm{\nabla \Delta P^{k+1/2}}^2_0 =     \scal{r^{k+1/2}  \nabla \tilde G_1^{k+\f12}}{\nabla \Delta Q^{k+1/2})} +  \scal{r^{k+1/2} \nabla \tilde G_2^{k+\f12}}{\nabla \Delta P^{k+1/2}}.
        \end{aligned}
    \]
    From the conservation of the modified Hamiltonian~\eqref{eq:discrete-ham}--\eqref{eq:pres-ham} and assuming a finite initial Hamiltonian, we have 
    \[
    \begin{aligned}
     &\scal{r^{k+1/2}  \nabla \tilde G_1^{k+\f12}}{\nabla \Delta Q^{k+1/2}} +  \scal{r^{k+1/2} \nabla \tilde G_2^{k+\f12}}{\nabla \Delta P^{k+1/2}} \\
     &\le \frac{C}{2} \left( \norm{\nabla \tilde G_1^{k+\f12}}_0^2 + \norm{\nabla \Delta Q^{k+1/2}}_0^2 + \norm{\nabla \tilde G_2^{k+\f12}}^2_0  + \norm{\nabla \Delta P^{k+1/2}}^2_0 \right).
     \end{aligned}
    \]
    Then, from the result of Proposition~\ref{prop:bound-nablag}, for any $\epsilon>0$, we have 
    \[
        \norm{\nabla \tilde G_1^{k+\f12}}_0^2 + \norm{\nabla \tilde G_2^{k+\f12}}^2_0  \le \epsilon  \norm{\nabla \Delta Q^{k+1/2}}_0^2 + \epsilon\norm{\nabla \Delta P^{k+1/2}}^2_0   + C(\epsilon).
    \]
    Therefore, combining the two previous inequalities, we obtain 
    \begin{equation}
        \norm{\nabla \Delta Q^{k+1/2}}^2_0 +  \norm{\nabla \Delta P^{k+1/2}}^2_0 \le C.
        \label{eq:boundnabladelta}
    \end{equation}
    Secondly, by multiplying the first equation of \eqref{eq:SAV-schro-discr} with $\Delta^2 (P^{k+1/2})$, the second equation with $\Delta^2 (Q^{k+1/2})$, integrating over $\Omega$, and summing the two, we obtain after integration by parts that
    \[
        \begin{aligned}
        \norm{\Delta P^\kk}_0^2&-\norm{\Delta P^k}^2_0 + \norm{\Delta Q^\kk}_0^2-\norm{\Delta Q^k}^2_0  \\
        &=  \tau r^{k+\f12} \scal{\nabla \tilde G_1^{k+\f12}}{\nabla \Delta Q^{k+1/2}} - \tau r^{k+\f12} \scal{\nabla \tilde G_2^{k+\f12}}{\nabla \Delta P^{k+1/2}}.
        \end{aligned}
    \]
    Then, combining the result of Proposition~\ref{prop:bound-nablag} and the inequality \eqref{eq:boundnabladelta}, we have
    \[
        \norm{\Delta P^\kk}_0^2-\norm{\Delta P^k}^2_0 + \norm{\Delta Q^\kk}_0^2-\norm{\Delta Q^k}^2_0  \le \tau C,
    \]
     and summing from $k=0\to N_T$, we obtain \eqref{eq:H2-estim-num}.
    \hfill \qed
\end{proof}
}

\AP{
\begin{remark}
From the fact that $H^2(\Omega) \subseteq L^\infty(\Omega) $ for $d\le 3$, we can conclude from the previous proposition that for $k=1,\dots N_T-1$, 
\begin{equation}
    \norm{P^{k+1}}_{L^\infty} + \norm{Q^{k+1}}_{L^\infty} \le C .
    \label{eq:linf-bound}
\end{equation}
\label{rem:linf}
\end{remark}
}

Next, we present the stability inequality that will be useful in the convergence analysis.  
\begin{proposition}[Stability inequality]
    The solution of \eqref{eq:SAV-schro-discr} satisfies the stability inequality
    \begin{equation}
        \max_{k=0,\dots,N_T-1}\left[ \norm{P^\kk}_0^2+ \norm{Q^\kk}_0^2\right] + \tau^2 \sum_{k=0}^{N_T-1}\left[\norm{\frac{P^\kk - P^k}{\tau}}_0^2  +  \norm{\frac{Q^\kk - Q^k}{\tau}}_0^2\right] \le C(\tau, H^0, N_T).
        \label{eq:stab-ineq}
    \end{equation}
    \label{prop:H2-bound}
\end{proposition}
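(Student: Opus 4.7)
The plan is to handle the two parts of \eqref{eq:stab-ineq} separately, using the conservation laws already established and the $H^2$ and $L^\infty$ bounds of Proposition~\ref{prop:H2bound} together with Remark~\ref{rem:linf}.

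For the first term, the mass conservation result in Theorem~\ref{th:mass-cons} gives $\norm{U^{k+1}}_0^2 = \norm{U^{k}}_0^2 + \mathcal{O}(\tau^3)$, and iterating from $k=0$ up to at most $N_T-1$ steps yields
\[
\norm{P^{k+1}}_0^2 + \norm{Q^{k+1}}_0^2 = \norm{U^{k+1}}_0^2 \le \norm{U^0}_0^2 + C N_T \tau^3,
\]
which is uniformly bounded in $k$ by a constant depending on $H^0$, $N_T$ and $\tau$. Taking the maximum over $k$ therefore controls the first term.

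For the second term I would use the scheme \eqref{eq:SAV-schro-discr} directly to rewrite the discrete time derivatives. From the first equation,
\[
\frac{P^{k+1}-P^k}{\tau} = -\Delta Q^{k+1/2} + r^{k+1/2}\tilde g_1^{k+1/2},
\]
and analogously for $Q^{k+1}-Q^k$. Squaring and using $(a+b)^2 \le 2a^2+2b^2$ gives
\[
\Big\lVert\tfrac{P^{k+1}-P^k}{\tau}\Big\rVert_0^2 + \Big\lVert\tfrac{Q^{k+1}-Q^k}{\tau}\Big\rVert_0^2 \le 2\bigl(\norm{\Delta Q^{k+1/2}}_0^2 + \norm{\Delta P^{k+1/2}}_0^2\bigr) + 2\abs{r^{k+1/2}}^2\bigl(\norm{\tilde g_1^{k+1/2}}_0^2 + \norm{\tilde g_2^{k+1/2}}_0^2\bigr).
\]
Using the triangle inequality to pass from $\phi^{k+1/2}$ to $\phi^{k+1},\phi^k$, the $H^2$-bound \eqref{eq:H2-estim-num} uniformly controls the first bracket by a constant depending on $T$ and the initial data. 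The factor $\abs{r^{k+1/2}}^2$ is uniformly bounded by $2\tilde H^0$ through Theorem~\ref{th:preservation-ham}. Finally, since Remark~\ref{rem:linf} yields a uniform $L^\infty$-bound on $P^k,Q^k$, the growth assumption \eqref{eq:assum-bound-g'} on the $g_i$ (and hence on the second-order extrapolates $\tilde g_i^{k+1/2}$) provides a uniform bound on $\norm{\tilde g_i^{k+1/2}}_0$. Summing over $k=0,\dots,N_T-1$ and multiplying by $\tau^2$ produces $\tau^2 N_T \cdot C = \tau T\cdot C$, giving the required bound.

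The only delicate step is the treatment of $\tilde g_i^{k+1/2}$: because this is a second-order extrapolation involving previous iterates, one must be careful to verify that its $L^2$-norm admits a uniform bound independent of $k$. This follows by applying the extrapolation formula to the already-bounded values $P^{k-j},Q^{k-j}$ from earlier steps, combined with \eqref{eq:assum-bound-g'} and the $L^\infty$-estimate \eqref{eq:linf-bound}; for the first step one uses a one-sided approximation or the exact value from the initial data. Once this uniform control is in place, the estimate \eqref{eq:stab-ineq} follows immediately from the summation argument above.
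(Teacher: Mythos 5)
Your proof is correct, but it follows a genuinely different route from the paper's. The paper obtains both terms of \eqref{eq:stab-ineq} in a single computation: it tests the $P$-equation of \eqref{eq:SAV-schro-discr} with $2\tau P^{k+1}$ (and the $Q$-equation with $2\tau Q^{k+1}$) and invokes the identity $2(a-b)a=a^2-b^2+(a-b)^2$, so that $\norm{P^{k+1}}_0^2-\norm{P^k}_0^2$ and $\tau^2\norm{(P^{k+1}-P^k)/\tau}_0^2$ appear together on the left; the cross term $-2\tau\scal{\nabla Q^{k+1/2}}{\nabla P^{k+1}}$ is then bounded by $4\tau H^0$ using only the conserved modified Hamiltonian, the nonlinear term is bounded via $\norm{G_i^{k+1/2}}_0\le C$, and one sums the resulting recursion $\norm{P^{k+1}}_0^2+\tau^2\norm{(P^{k+1}-P^k)/\tau}_0^2\le C\tau+\norm{P^k}_0^2$. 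You instead decouple the two terms: the maximum is controlled by iterating the near-conservation of mass from Theorem~\ref{th:mass-cons}, and the difference quotients are read off directly from the scheme and bounded term by term using the $H^2$-estimate of Proposition~\ref{prop:H2bound}, the Hamiltonian bound on $r^{k+1/2}$, and the $L^\infty$-bound \eqref{eq:linf-bound}. Both arguments rest on the same a priori machinery, so neither is circular. Your version buys a slightly sharper statement for the second term (the sum is $O(\tau)$, not merely bounded) at the price of needing the full $H^2$-bound to control $\norm{\Delta Q^{k+1/2}}_0$, whereas the paper's integration-by-parts trick handles the Laplacian contribution with only the $H^1$-level information contained in the conserved Hamiltonian. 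You also rightly flag the uniform $L^2$-control of the extrapolated terms $\tilde g_i^{k+1/2}$ (which involve earlier iterates and require a special treatment at the first step) as a point needing verification; the paper passes over this silently, so your remark is a genuine, if minor, improvement in rigor.
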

\begin{proof}
    Multiplying the first equation with $2\tau P^{k+1}$, integrating over $\Omega$ and using $2(a-b)a = a^2-b^2+(a-b)^2$, we obtain 
    \[
        \norm{P^\kk}_0^2 + \tau^2 \norm{\frac{P^\kk - P^k}{\tau}}_0^2 - \norm{P^k}^2_0 = - 2 \tau \scal{\nabla Q^{k+1/2}}{\nabla P^\kk} + 2\tau r^{k+1/2} \scal{\tilde G_1^{k+1/2}}{P^\kk}.
        \label{eq:proof-stab1}
    \]
    \AP{
    Using the Cauchy-Schwartz inequality and \eqref{eq:discrete-ham}--\eqref{eq:pres-ham}, we obtain 
    \[
        - 2 \tau \scal{\nabla Q^{k+1/2}}{\nabla P^\kk} \le 2 \tau \norm{\nabla Q^{k+1/2}}_0 \norm{\nabla P^\kk}_0 \le 4 \tau H^0.
    \]
    Then, from the conservation of the Hamiltonian~\eqref{eq:discrete-ham}--\eqref{eq:pres-ham}, and the conservation of the $L^2$-norm of the solution~\eqref{eq:conservation-mass}, we obtain using the Cauchy-Schwartz inequality
    \[
    r^{k+1/2} \scal{\tilde G_1^{k+1/2}}{P^\kk} \le C \left(\norm{\tilde G_1^{k+1/2}}_0\norm{P^\kk}_0 \right)\le C \norm{ G_1^{k+1/2}}_0 + \mathcal{O}(\tau^2).  
    \]
    Since from Proposition~\ref{prop:H2bound} and \eqref{eq:linf-bound}, we have that $\norm{G_i^{k+1/2}}_0\le C$ with $i=1,2$, for a large number of nonlinearities. 
    Therefore, combining the previous inequalities for the right-hand side of~\eqref{eq:proof-stab1}, we obtain
    \[
        \norm{P^\kk}_0^2 + \tau^2 \norm{\frac{P^\kk - P^k}{\tau}}_0^2 \le C\tau + \norm{P^k}_0^2.
    \]
    The same can be found for the second equation by repeating the same calculations. Summing for $k=0\to N_T-1$, we find \eqref{eq:stab-ineq}.}
    \hfill \qed
\end{proof}

\section{Convergence analysis}
\subsection{Notations}
To study the convergence of the scheme, we introduce the following notation: For $k = 0, \dots, N_T - 1$ we set
\begin{equation*}
	U(t,x) := \frac{t-t^k}{\tau } U^{k+1} + \frac{t^{k+1}-t}{\tau } U^{k}, \quad t \in (t^k, t^{k+1}],
\end{equation*}
and
\begin{equation*}
	\frac{\partial U}{\partial t} := \frac{U^{k+1}-U^k}{\tau } \quad t \in (t^k, t^{k+1}].
\end{equation*}
We also define
\[
    U^+ :=  U^{k+1},\quad U^- := U^k,
\]
and
\begin{equation*}
	U - U^+ = (t-t^{k+1}) \frac{\partial U}{\partial t},\quad U - U^- = (t-t^{k}) \frac{\partial U}{\partial t} \quad t \in (t^k, t^{k+1}], \quad k\ge 0.
\end{equation*}

In addition, we take analogous definitions for $P$ and $Q$: For $k = 0, \dots, K_T - 1$ we set
\begin{equation*}
	P(t,x) := \frac{t-t^k}{\tau} P^{k+1} + \frac{t^{k+1}-t}{\tau} P^{k}, \quad t \in (t^k, t^{k+1}],
\end{equation*}
\begin{equation*}
	\frac{\partial P}{\partial t} := \frac{P^{k+1}-P^k}{\tau} \quad t \in (t^k, t^{k+1}], 
\end{equation*}
\[
    P^+ :=  P^{k+1},\quad P^- := P^k,
\]
and
\begin{equation*}
	P - P^+ = (t-t^{k+1}) \frac{\partial P}{\partial t}, \quad \text{and} \quad P - P^{-} = (t-t^{k}) \frac{\partial P}{\partial t} \quad t \in (t^k, t^{k+1}], \quad k\ge 0.
\end{equation*}
\subsection{Convergence theorem} Now we are in the position to establish time  convergence for the semi discrete SAV scheme \eqref{eq:SAV-schro-discr}.

\begin{theorem}[Convergence]
Let $\{p,q\}$ be a pair of functions such that 
\begin{equation*}
    \begin{cases}
        p(t,x) \in L^2\left([0,T];H^1(\Omega) \right) \bigcap H^1\left([0,T];\left(H^1(\Omega)\right)' \right) \\
        q(t,x) \in L^2\left([0,T];H^1(\Omega) \right) \bigcap H^1\left([0,T];\left(H^1(\Omega)\right)' \right).
    \end{cases}
\end{equation*}
    Then for $\tau \to 0$ we can extract a subsequence of solutions of \eqref{eq:SAV-schro-discr}, such that 
    \begin{align}
        P, P^\pm \rightarrow p\quad \text{strongly in} \quad L^2\left([0,T]; L^2(\Omega)\right),\label{eq:strong-p}\\
        Q, Q^\pm \rightarrow q\quad \text{strongly in} \quad L^2\left([0,T]; L^2(\Omega)\right),\label{eq:strong-q}\\
        P, P^\pm \rightharpoonup p \quad \text{weakly in} \quad L^2\left([0,T]; H^1(\Omega)\right), \label{eq:weak-p}\\ 
        Q, Q^\pm \rightharpoonup q \quad \text{weakly in} \quad L^2\left([0,T]; H^1(\Omega)\right), \label{eq:weak-q}\\
        \frac{\partial P}{\partial t} \rightharpoonup \frac{\partial p}{\partial t} \quad \text{weakly in} \quad L^2\left([0,T]; \left(H^1(\Omega)\right)'\right), \label{eq:weak-dtp}\\
        \frac{\partial Q}{\partial t} \rightharpoonup \frac{\partial q}{\partial t} \quad \text{weakly in} \quad L^2\left([0,T]; \left(H^1(\Omega)\right)'\right), \label{eq:weak-dtq}\\
        r^\kk \rightharpoonup r(t) = \sqrt{\cae_1[t]+\cae_c}\quad \text{weak-star in} \quad L^\infty\left(0,T\right). \label{eq:weakstar-r}
    \end{align}
    The limit $\{p,q\}$ satisfies the nonlinear Schrödinger model \eqref{eq:SAV-ham} in the following weak sense
    \begin{equation}
        \begin{cases}
            \int_0^T \left< \f{\p p}{\p t}, \eta\right>\dd t &=  \int_0^T \int_\Omega \nabla q \nabla \eta + \left(V(x)q + \f{\p F(\abs{p}^2,\abs{q}^2)}{\p q} \right) \eta \dd x \dd t \\
            \int_0^T \left< \f{\p q}{\p t}, \eta\right>\dd t &=  \int_0^T \int_\Omega -\nabla p \nabla \eta -  \left(V(x)p + \f{\p F(\abs{p}^2,\abs{q}^2)}{\p p} \right)\eta \dd x \dd t, \\
        \end{cases}
        \label{eq:limit-sys}
    \end{equation}
    for all $\eta \in L^2\left([0,T]; H^1(\Omega) \right)$.
\end{theorem}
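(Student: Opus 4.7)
The strategy is the classical compactness-and-passage-to-the-limit argument: gather uniform a priori bounds on the piecewise-affine interpolants, extract subsequences by Banach--Alaoglu, upgrade weak to strong convergence via an Aubin--Lions type argument, and then pass to the limit in the variational form of the scheme. The only genuinely delicate point is handling the nonlinearity $r^{k+1/2}\tilde G_i^{k+1/2}$, for which we exploit the identity $r\cdot g_i = \delta\cae_1/\delta(\cdot)$ at the continuous level together with the strong $L^2$ convergence of $P^{\pm},Q^{\pm}$.

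\textbf{Step 1: Uniform bounds.} From Theorem~\ref{th:preservation-ham} and Theorem~\ref{th:mass-cons} we obtain uniform bounds on $|P^{k}|_1,|Q^{k}|_1,|r^{k}|,\|P^k\|_0,\|Q^k\|_0$ independent of $\tau$. Proposition~\ref{prop:H2-bound} then yields a uniform $L^2([0,T];H^1(\Omega))$ bound on the piecewise-affine interpolants $P,Q$ and their one-sided values $P^\pm,Q^\pm$, together with a control of $\|\partial_t P\|_{L^2((H^1)')}$ and $\|\partial_t Q\|_{L^2((H^1)')}$ obtained by testing the scheme \eqref{eq:SAV-schro-discr} against an arbitrary $\eta\in L^2([0,T];H^1(\Omega))$ and using the $L^\infty$ bound \eqref{eq:linf-bound} to control $\tilde G_i^{k+1/2}$ in $L^2$. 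The sequence $\{r^{k+1/2}\}$ is uniformly bounded in $\ell^\infty$, hence the piecewise-constant extension is bounded in $L^\infty(0,T)$.

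\textbf{Step 2: Extraction and identification of the limit.} By Banach--Alaoglu we extract a subsequence satisfying the weak convergences \eqref{eq:weak-p}--\eqref{eq:weak-dtq} and the weak-star convergence of (a suitable piecewise-constant extension of) $r^{k+1/2}$ to some $r^\star\in L^\infty(0,T)$. The bound $\|P-P^\pm\|_{L^2(L^2)}^2\le \tau^2 \|\partial_t P\|_{L^2(L^2)}^2$ (and similarly for $Q$), combined with Proposition~\ref{prop:H2-bound}, shows that $P,P^+,P^-$ (resp.\ $Q,Q^+,Q^-$) share the same limits. Strong convergence \eqref{eq:strong-p}--\eqref{eq:strong-q} in $L^2([0,T];L^2(\Omega))$ then follows from the Aubin--Lions compactness lemma applied with $H^1\hookrightarrow L^2\hookrightarrow (H^1)'$. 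Up to a further subsequence we may assume a.e.\ convergence of $P^{\pm},Q^{\pm}$ on $(0,T)\times\Omega$.

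\textbf{Step 3: Passing to the limit in the scheme.} For a test function $\eta\in L^2([0,T];H^1(\Omega))$ we test \eqref{eq:SAV-schro-discr} at each time level, multiply by $\tau$, and sum in $k$ to obtain the piecewise identity
\begin{equation*}
    \int_0^T\Bigl\langle \tfrac{\partial P}{\partial t},\eta\Bigr\rangle\dd t
    =\int_0^T\!\!\int_\Omega \nabla Q^{\sharp}\cdot\nabla\eta\,\dd x\,\dd t
    +\int_0^T\!\!\int_\Omega r^\sharp\,\tilde G_1^{\sharp}\,\eta\,\dd x\,\dd t,
\end{equation*}
where $\sharp$ denotes the midpoint $k+1/2$ extended as a piecewise-constant function of $t$ (and analogously for the second equation). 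The linear terms pass to the limit immediately by \eqref{eq:weak-p}--\eqref{eq:weak-dtp}. For the nonlinear term we use the pointwise identity
\begin{equation*}
    r^{k+1/2}\,\tilde G_1^{k+1/2} \;=\; \sqrt{\cae_1\!\left(P^{k+1/2},Q^{k+1/2}\right)+\cae_c}\;\frac{1}{\sqrt{\cae_1+\cae_c}}\;\frac{\partial\cae_1}{\partial Q^{k+1/2}}+\mathcal O(\tau^2),
\end{equation*}
i.e.\ modulo the $\mathcal O(\tau^2)$ extrapolation error the product equals $V(x)Q^{k+1/2}+\tfrac{\partial F}{\partial q}(|P^{k+1/2}|^2,|Q^{k+1/2}|^2)$. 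Combining the a.e.\ convergence of $P^\pm,Q^\pm$, the growth bound \eqref{eq:assum-bound-g'}, the uniform $L^\infty$ control \eqref{eq:linf-bound} (which provides an equi-integrable dominant), and Vitali's theorem yields the strong $L^2$ convergence of this product to $V(x)q+\partial_qF(|p|^2,|q|^2)$, which pairs with the weakly convergent $\eta$ to give the right-hand side of \eqref{eq:limit-sys}. Identifying $r^\star=\sqrt{\cae_1[t]+\cae_c}$ a posteriori (by continuity of $\cae_1$ along the strongly convergent sequence) completes \eqref{eq:weakstar-r}.

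\textbf{Main obstacle.} Everything reduces to one issue: justifying the passage to the limit in the product $r^{k+1/2}\tilde G_i^{k+1/2}$, where $r^{k+1/2}$ is only known to converge weakly-$*$ while $\tilde G_i^{k+1/2}$ is nonlinear in $P^{k+1/2},Q^{k+1/2}$. The trick is not to treat the two factors separately but to use the algebraic cancellation $r\,g_i=\delta\cae_1/\delta(\cdot)$, so that the limit is obtained from the strong $L^2$ (and a.e.) convergence of $P^\pm,Q^\pm$ alone; the $L^\infty$ bound from Remark~\ref{rem:linf} then provides the domination needed to invoke Vitali and close the argument.
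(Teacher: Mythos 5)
Your Steps 1 and 2 follow the paper's route (a priori bounds from the conserved modified Hamiltonian and the stability inequality, Banach--Alaoglu, Aubin--Lions), and the linear terms in Step 3 are handled as in the paper. The problem is the ``pointwise identity'' on which your whole treatment of the nonlinear term rests. In the SAV scheme \eqref{eq:SAV-schro-discr} the quantity $r^{k+1/2}$ is \emph{not} $\sqrt{\cae_1(P^{k+1/2},Q^{k+1/2})+\cae_c}$ evaluated at the discrete solution: it is an independent discrete unknown, propagated by the third equation of the scheme, and the two quantities coincide only at the continuous level (and at $t=0$ if so initialized). Consequently the cancellation
\[
  r^{k+1/2}\,\tilde G_1^{k+1/2}=\frac{\partial \cae_1}{\partial Q^{k+1/2}}+\mathcal O(\tau^2)
\]
is not an algebraic fact about the scheme; it is equivalent to the statement that the auxiliary variable stays $\mathcal O(\tau^2)$-close to $\sqrt{\cae_1+\cae_c}$ along the discrete trajectory, which is precisely (a quantitative strengthening of) the convergence \eqref{eq:weakstar-r} you are trying to prove. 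You acknowledge identifying $r^\star=\sqrt{\cae_1[t]+\cae_c}$ only ``a posteriori,'' yet you have already used that identification inside Step 3 to pass to the limit: the argument is circular, and the ``main obstacle'' you single out is not actually resolved by your trick.

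The paper avoids this by keeping the two factors separate. It first shows that $\cae_1(U^{k+1/2})\rightharpoonup\cae_1(u)$ weak-star in $L^\infty(0,T)$ (using the strong convergences \eqref{eq:strong-p}--\eqref{eq:strong-q} and a Lipschitz-type bound on $\cae_1$), and likewise for the functional derivatives $\delta\cae_1/\delta q$, $\delta\cae_1/\delta p$; this gives convergence of $G_1^{k+1/2}$ to $g_1$ driven entirely by the discrete solution $(P,Q)$. This is then paired with the weak-star convergence of the scalar sequence $r^{k+1/2}$, so that the product converges as (weak-star scalar in time) $\times$ (strongly convergent function), and the weak-star limit of $r$ is identified through the convergence of $\cae_1(U^{k+1/2})$ rather than assumed. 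To repair your proof you would need to replace the claimed pointwise identity by this two-factor argument (or by an independent proof that $|r^{k+1/2}-\sqrt{\cae_1(P^{k+1/2},Q^{k+1/2})+\cae_c}|\to 0$, which is essentially the content of the later error analysis and is not available at this stage).
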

\begin{proof}

    \textit{Step 1: Weak and strong convergences. }
    First, the weak convergences \eqref{eq:weak-p}, \eqref{eq:weak-q}, \eqref{eq:weak-dtp} and \eqref{eq:weak-dtq} follow from the assumption that the initial Hamiltonian energy is bounded and the stability inequality~\eqref{eq:stab-ineq}.

    Then, the weak-star convergence \eqref{eq:weakstar-r} also holds true by the conservation of the modified Hamiltonian and the boundedness of  the initial state. 

    From the compact embedding $H^1(\Omega) \subset L^2(\Omega) \equiv \left(L^2(\Omega)\right)'$, we can apply the Lions-Aubin Lemma~\cite{lions-nonlineaire} to find both convergences \eqref{eq:strong-p} and \eqref{eq:strong-q}.

    \textit{Step 2: Limit system. }
    Let us work on the first equation of the discrete system. We use a test function $\eta \in L^2\left([0,T]; H^1(\Omega) \right)$ and analyze the convergence of the terms separately. First, from the weak convergence \eqref{eq:weak-q}, we have 
    \[
        \int_0^T \int_\Omega \nabla \left(\frac{Q^+ + Q^-}{2} \right)\nabla \eta\dd x \dd t \to \int_0^T \int_\Omega \nabla q \nabla \eta\dd x \dd t.
    \]
    Secondly, from the fact that $\tilde G_1^{k+1/2}$ is a second-order approximation of $G_1^{k+1/2}$, we have 
    \[
        \int_0^T \int_\Omega r^{k+1/2} \tilde G_1^{k+1/2} \eta \dd x \dd t = \int_0^T \int_\Omega r^{k+1/2} G_1^{k+1/2} \eta \dd x \dd t + \int_0^T \int_\Omega r^{k+1/2} \mathcal{O}(\tau^2) \eta \dd x \dd t.
    \]
    From the inequality 
    \[
        \abs{\cae_1(U^{k+1/2}) - \cae_1(u)} \le C \norm{U^{k+1/2}}^2_{L^1(\Omega)}, 
    \]
    and the fact that 
    \[
        P^\pm,Q^\pm \rightharpoonup p,q \quad \text{weak-star in} \quad L^\infty\left(0,T;H^1(\Omega)\right),  
    \]
    which follows from the conservation of both the Hamiltonian energy and the $L^2$ norm, we have 
    \[
        \cae_1(U^{k+1/2})\rightharpoonup \cae_1(u) \quad \text{weak-star in} \quad L^\infty\left(0,T\right).
    \]
    The same holds true for $\f{\delta \cae_1^{k+1/2}}{\delta q}$ and $\f{\delta \cae_1^{k+1/2}}{\delta p}$ using similar arguments. 
    Then, using also the strong convergences \eqref{eq:strong-p} and \eqref{eq:strong-q}, together with the weak-star convergence \eqref{eq:weakstar-r}, we obtain 
    \[
        \int_0^T \int_\Omega r^{k+1/2} G_1^{k+1/2} \eta \dd x \dd t + \int_0^T \int_\Omega r^{k+1/2} \mathcal{O}(\tau^2) \eta \dd x \dd t \to \int_0^T \int_\Omega r(t) g_1(t) \eta \dd x \dd t.
    \]
    Finally, for any $\eta \in H^1([0,T];H^1(\Omega))$, by integration by parts we have  
    \[
        \int_0^T\lscal{\f{\p P}{\p t}}{\eta}\,\dd t = -\int_0^T \lscal{P}{\f{\p \eta }{\p t}}\,\dd t + \lscal{P(T)}{\eta(T)} - \lscal{P(0)}{\eta(0)}.  
    \] 
    Hence, from the regularity of $\eta$ and the convergence \eqref{eq:strong-p} , we obtain
    \[
        \int_0^T \lscal{P}{\f{\p \eta}{\p t}}\,\dd t  \to \int_0^T \scal{p}{\f{\p \eta}{\p t}}\,\dd t\quad \text{as}\quad \tau \to 0\quad\text{and }\quad \forall \eta  \in H^1([0,T];H^1(\Omega)).
    \]
    Gathering the previous convergences, we have
    \[
        \lscal{p(T)}{\eta(T)} - \lscal{p(0)}{\eta(0)} -  \int_0^T \scal{p}{\f{\p \eta}{\p t}}\,\dd t =  \int_0^T \int_\Omega \nabla q \nabla \eta + \left(V(x)q + \f{\p F(\abs{p}^2,\abs{q}^2)}{\p q}\right) \eta \dd x \dd t.
    \]
    Since $\nabla q + \left(V(x)q+ \f{\p F(\abs{p}^2,\abs{q}^2)}{\p q}\right)\in L^2(\Omega)$ which follow from the conservation of the Hamiltonian energy, we know that $p\in H^1\left([0,T]; H^{-1}(\Omega)  \right) $. 
    Finally, we find the first equation of the limit system~ \eqref{eq:limit-sys} and the same arguments can be applied to the second equation. This yields the result. 
    
    \hfill \qed
\end{proof}

\section{Error analysis}

In this section we analyse the difference between the exact and modified Hamiltonian, and establish a bound on
\[
    \abs{H[p(t^k),q(t^k)] - \tilde{H}[P^k,Q^k]}.  
\]
In addition we prove second-order convergence of the fully discrete SAV scheme \eqref{eq:SAV-schro-fulldiscr} approximating the solution of the nonlinear Schr\"odinger equation \eqref{eq:schro}. We introduce the following notation to study the error 
\begin{equation}
    e_u^k = \theta_u^k + \rho_u^k,
    \label{eq:decomp-error}
\end{equation}
where 
\[
    \theta_u^k = U^k - (I_N u)(t^k,x),\quad \rho_u^k = (I_N u)(t^k,x)- u(t^k,x).  
\]
For our convergence result we assume that the solution $u$  of \eqref{eq:schro} is sufficiently smooth satisfying
\begin{equation}
    \norm{\p_{ttt}u}_{L^\infty\left(0,T;H^1(\Omega)\right)} + \norm{u}_{L^\infty\left(0,T;H^2(\Omega)\right)} \le C.
    \label{eq:assumption-sol}
\end{equation} 
We define the different truncation errors by 
\begin{align*}
    T^{k+\f12}_u &= \f{u^\kk-u^k}{\Delta t}-\p_t u(t^{k+\f12}),\\
    \ov T^{k+\f12}_u &= u^{k+\f12}-u(t^{k+\f12}) = \f{u^\kk+u^k}{2} -u(t^{k+\f12}). \\
\end{align*}
We commence with two important lemma that will be useful in the global error analysis.
\begin{lemma}[Boundedness of nonlinear functions]
    If $(p,q)$ is a solution of \eqref{eq:SAV-schro} satisfying \eqref{eq:assumption-sol}, we have for $i=1,2$
    \[
        \abs{g_i(p,q)}, \abs{\f{\p g_i}{\p p}}, \abs{\f{\p g_i}{\p q}}, \abs{\f{\p^2 g_i}{\p p \p q}} ,  \abs{\f{\p^2 g_i}{\p p^2}}, \abs{\f{\p^2 g_i}{\p q^2}} \le C.
    \]
    \label{lem:bound-continuous-g}
\end{lemma}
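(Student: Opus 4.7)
My plan is to derive all of the stated bounds as a direct combination of Sobolev embedding applied to the regularity hypothesis \eqref{eq:assumption-sol} and the a priori growth assumption \eqref{eq:assum-bound-g'}. The statement is pointwise in $(t,x)$, so the argument reduces to producing a uniform $L^\infty$ bound on $(p,q)$ and plugging it into the assumed polynomial control on $g_i'$ and $g_i''$.

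First, I would convert the $H^2$-regularity of $u$ into a uniform pointwise bound on $p$ and $q$. Writing $u = p + i q$, the assumption \eqref{eq:assumption-sol} gives
\[
\norm{p}_{L^\infty(0,T;H^2(\Omega))} + \norm{q}_{L^\infty(0,T;H^2(\Omega))} \le C.
\]
Since $d \le 3$, the Sobolev embedding $H^2(\Omega) \hookrightarrow L^\infty(\Omega)$ then yields a constant $M>0$ such that
\[
\norm{p(t,\cdot)}_{L^\infty(\Omega)} + \norm{q(t,\cdot)}_{L^\infty(\Omega)} \le M, \qquad \forall\, t \in [0,T].
\]

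Second, the bounds on $\partial_p g_i$, $\partial_q g_i$, $\partial_p^2 g_i$, $\partial_q^2 g_i$ and $\partial_p\partial_q g_i$ follow at once by inserting this pointwise control into \eqref{eq:assum-bound-g'}. At any space-time point $(t,x)$,
\[
\left|\tfrac{\partial g_i}{\partial p}\right| + \left|\tfrac{\partial g_i}{\partial q}\right| \le C\bigl((|p|+|q|)^\beta + 1\bigr) \le C(M^\beta + 1),
\]
and each second-order partial derivative is bounded analogously by $C(M^{\beta'}+1)$.

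Third, to control $|g_i|$ itself I would apply the fundamental theorem of calculus along the segment from $(0,0)$ to $(p(t,x),q(t,x))$:
\[
g_i(p,q) = g_i(0,0) + \int_0^1 \bigl[p\,\partial_p g_i(sp,sq) + q\,\partial_q g_i(sp,sq)\bigr]\,ds,
\]
and estimate the integrand using the first-derivative bound of Step~2 together with $|p|+|q| \le M$. Since $g_i(0,0)$ is a fixed scalar (determined by $V$ and $f$ at zero, divided by a positive constant coming from the standing assumption $\mathcal{E}_1+\mathcal{E}_c>0$), this produces $|g_i(p,q)| \le C$. I do not foresee any serious obstacle: the argument is essentially a Sobolev embedding plus the chain rule, and all of the genuine structural content is already packaged into the growth hypothesis \eqref{eq:assum-bound-g'} and the regularity hypothesis \eqref{eq:assumption-sol}.
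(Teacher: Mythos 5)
Your proposal is correct and follows essentially the same route as the paper, whose proof is the single sentence that the result follows from $u\in L^\infty(0,T;H^2(\Omega))$, the $L^\infty$ bound of Remark 3.5 (i.e.\ the embedding $H^2(\Omega)\hookrightarrow L^\infty(\Omega)$ for $d\le 3$), and the growth assumption \eqref{eq:assum-bound-g'}. Your Step 3, using the fundamental theorem of calculus to bound $g_i$ itself (which \eqref{eq:assum-bound-g'} does not directly control), is a detail the paper leaves implicit, and it is a sensible way to fill that gap.
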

\begin{proof}
\AP{
This result is found by a combination of the fact that $u\in L^\infty(0,T;H^2(\Omega))$, Remark~\ref{eq:linf-bound}, and assumption~\eqref{eq:assum-bound-g'}.}
\end{proof}
\begin{remark}
\label{rem:born-r}
From Lemma \ref{lem:bound-continuous-g}, and the hypothesis \eqref{eq:assumption-sol}, we know that
\[
    \abs{\p_{ttt} r} \le C \left(\norm{\p_{ttt} p}_0^2 + \norm{\p_{ttt} q}_0^2  \right).
\] 
\end{remark}
\AP{We have the following Lemma on the norm of the truncation errors (see Lemma 4.7 in~\cite{Wang} for example).}
\begin{lemma}[Truncation errors]
    \label{lem:error-trunc}
    For $\alpha = -1,0,1,2$, we have 
    \begin{equation*}
        \begin{aligned}
            \norm{T^{k+\f12}_\psi}_{H^\alpha(\Omega)}^2 \le \tau^3 \int_{t^k}^{t^\kk}\norm{\p_{ttt}\psi(s)}^2_{H^\alpha(\Omega)}\dd s ,\\
            \norm{\ov T^{k+\f12}_\psi}_{H^\alpha(\Omega)}^2 \le \tau^3 \int_{t^k}^{t^\kk} \norm{\p_{ttt}\psi(s)}^2_{H^\alpha(\Omega)}\dd s ,\\
        \end{aligned}
    \end{equation*}
\end{lemma}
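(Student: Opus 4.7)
The plan is to derive both estimates from Taylor's theorem with integral remainder applied around the midpoint $t^{k+1/2}$, followed by Minkowski's inequality to pass the $H^\alpha(\Omega)$-norm through the time integrals, and finally Cauchy--Schwarz in the time variable to extract the $\tau^{3}$ prefactor.

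For the first bound, I would start from $\psi^{k+1} - \psi^k = \int_{t^k}^{t^{k+1}} \partial_t \psi(s)\,ds$ and Taylor-expand $\partial_t\psi(s)$ to order two around $t^{k+1/2}$:
\[
\partial_t \psi(s) = \partial_t \psi(t^{k+1/2}) + (s - t^{k+1/2})\,\partial_{tt}\psi(t^{k+1/2}) + \int_{t^{k+1/2}}^{s}(s-\sigma)\,\partial_{ttt}\psi(\sigma)\,d\sigma.
\]
Integrating over $[t^k,t^{k+1}]$ and dividing by $\tau$, the constant piece reproduces $\partial_t\psi(t^{k+1/2})$, while the linear piece vanishes by symmetry of the interval about $t^{k+1/2}$, so that
\[
T^{k+1/2}_\psi = \frac{1}{\tau}\int_{t^k}^{t^{k+1}}\int_{t^{k+1/2}}^{s}(s-\sigma)\,\partial_{ttt}\psi(\sigma)\,d\sigma\,ds,
\]
a clean $\partial_{ttt}$-remainder. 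For the midpoint averaging error I would use the dual representation
\[
\overline{T}^{k+1/2}_\psi = \tfrac12\int_{t^{k+1/2}}^{t^{k+1}}\partial_t\psi(s)\,ds - \tfrac12\int_{t^k}^{t^{k+1/2}}\partial_t\psi(s)\,ds,
\]
then Taylor-expand $\partial_t\psi(s)$ around $t^{k+1/2}$ again; a single integration by parts in $s$ is used to convert the surviving $\partial_{tt}\psi$-contribution into a kernel against $\partial_{ttt}\psi$, yielding an analogous double integral of $\partial_{ttt}\psi$ supported on an interval of length $O(\tau)$ with kernel of order $O(\tau)$.

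Once both expressions are in the form of a double time integral with a kernel of size $O(\tau)$ on a domain of length $O(\tau)$, the last step is routine: passing the spatial norm inside the integrals by Minkowski gives an $H^\alpha$-norm on $\partial_{ttt}\psi(\sigma)$, and Cauchy--Schwarz in $\sigma$ together with the $O(\tau^{1/2})$ square-root length of the interval produces a factor $\tau^{3/2}\|\partial_{ttt}\psi\|_{L^{2}(t^k,t^{k+1};H^\alpha)}$. Squaring yields precisely the advertised $\tau^{3}\int_{t^k}^{t^{k+1}}\|\partial_{ttt}\psi\|_{H^\alpha}^{2}\,ds$ on the right-hand side, uniformly in $\alpha \in \{-1,0,1,2\}$ since no spatial structure is ever used beyond the $H^\alpha$-norm commuting with time integration.

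The main technical obstacle is the second estimate: a direct symmetric Taylor expansion of $(\psi^{k+1}+\psi^k)/2$ around $t^{k+1/2}$ leaves an $O(\tau^{2})$ term proportional to $\partial_{tt}\psi(t^{k+1/2})$ that does \emph{not} cancel automatically, so the expected naive bound involves only $\partial_{tt}\psi$. Recovering the stronger form stated here requires either the extra integration by parts in time indicated above or an equivalent refined symmetric argument, and this is exactly where the care in Lemma~4.7 of \cite{Wang}, which the authors cite as the model for the statement, is concentrated. The remainder of the proof then reduces to purely routine Minkowski and Cauchy--Schwarz manipulations.
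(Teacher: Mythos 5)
The paper does not actually prove this lemma --- it is imported from Lemma~4.7 of the cited reference --- so your proposal has to stand on its own. For the first estimate it does: the midpoint Taylor expansion of $\p_t\psi$ with integral remainder kills the linear term by symmetry of $[t^k,t^{k+1}]$ about $t^{k+1/2}$, leaves a pure $\p_{ttt}$ remainder with an $O(\tau)$ kernel on an $O(\tau)$ interval, and Minkowski plus Cauchy--Schwarz in time give the stated bound (with constant at most $1$), uniformly in $\alpha$ since only the Bochner structure of the $H^\alpha$-norm is used. That part is correct and is the standard argument.

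The second estimate is where the gap is, and you have located it but not closed it. The surviving term $\f{\tau^2}{8}\p_{tt}\psi(t^{k+1/2})$ cannot be traded for a $\p_{ttt}$ contribution by integration by parts: starting from the exact representation
$\ov T^{k+\f12}_\psi=\f12\int_{t^{k+1/2}}^{t^{k+1}}(t^{k+1}-s)\,\p_{tt}\psi(s)\,\dd s+\f12\int_{t^{k}}^{t^{k+1/2}}(s-t^{k})\,\p_{tt}\psi(s)\,\dd s$
and integrating by parts in $s$ simply regenerates the boundary terms $\f{\tau^2}{8}\p_{tt}\psi(t^{k+1/2})$ from each half, and they add rather than cancel. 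In fact the second inequality as printed is false: for $\psi(t)=t^2$ one has $\ov T^{k+\f12}_\psi=\tau^2/4\neq 0$ while $\p_{ttt}\psi\equiv 0$, so the right-hand side vanishes. The correct (and standard) form, which is what the cited lemma provides and what the error analysis actually needs, is $\norm{\ov T^{k+\f12}_\psi}^2_{H^\alpha(\Omega)}\le C\tau^3\int_{t^k}^{t^\kk}\norm{\p_{tt}\psi(s)}^2_{H^\alpha(\Omega)}\dd s$, obtained from the representation above by exactly the Minkowski and Cauchy--Schwarz steps you describe; this is harmless downstream since $\p_{tt}u$ is controlled under \eqref{eq:assumption-sol} via the equation. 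So keep your proof of the first bound, replace $\p_{ttt}$ by $\p_{tt}$ in the second, and drop the claim that a refined symmetric argument recovers the $\p_{ttt}$ version --- the counterexample shows none exists.
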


\begin{theorem}[Error analysis]\label{thm:main}
    Assume that the solution of \eqref{eq:SAV-ham} satisfies \eqref{eq:assumption-sol} with initial condition 
    $
        u^0 \in H^3(\Omega).
    $
    Then the discrete solution $\{P^\kk,Q^\kk\}$ of the fully discrete SAV scheme  \eqref{eq:SAV-schro-fulldiscr} satisfies the error estimate
    \begin{equation*}
        \f12 \norm{\nabla e_q^{k+1}}^2_{0} + \f12 \norm{\nabla e_p^{k+1}}^2_{0} + \abs{e_r^{\kk}}^2 \le C \exp\left(\left[1-C\tau \right]^{-1} t^{\kk}\right)\left( \tau^4 + N^{-4}\right),  
    \end{equation*}
    where the constant $C$ depends on the smoothness of the solution \eqref{eq:assumption-sol}.

\end{theorem}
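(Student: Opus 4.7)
The plan is to mimic the conservation identity of Theorem~\ref{th:preservation-ham}, but applied at the level of the error equations obtained by subtracting the fully discrete scheme~\eqref{eq:SAV-schro-fulldiscr} from the continuous system~\eqref{eq:SAV-schro} evaluated near $t=t^{k+1/2}$. Using the decomposition~\eqref{eq:decomp-error}, Lemma~\ref{lem:interp-error} handles the interpolation part $\rho_u^k$ and produces the $N^{-4}$ term of the final bound, so it suffices to control the purely temporal errors $\theta_p$, $\theta_q$, and $e_r$.

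\medskip

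First I would derive the error equations for $(\theta_p^k,\theta_q^k,e_r^k)$. Their left-hand sides inherit exactly the Crank--Nicolson structure of~\eqref{eq:SAV-schro-fulldiscr}; their right-hand sides contain three kinds of consistency defects: the truncation errors $T_p^{k+1/2}$, $T_q^{k+1/2}$, $T_r^{k+1/2}$ and $\overline T^{k+1/2}$, bounded in $H^\alpha$ by Lemma~\ref{lem:error-trunc}; the interpolation defects of $\rho$, controlled by Lemma~\ref{lem:interp-error}; and the nonlinear consistency term
\[
r^{k+1/2}\tilde g_i^{k+1/2} - r(t^{k+1/2})\, g_i(p,q)(t^{k+1/2}).
\]
I would split this last quantity as $e_r^{k+1/2} g_i(p,q)(t^{k+1/2}) + r^{k+1/2}(\tilde g_i^{k+1/2} - g_i(p,q)(t^{k+1/2}))$; the second piece is in turn decomposed into an extrapolation error of order $\tau^2$ (by the defining property of $\tilde g_i^{k+1/2}$) plus a Lipschitz part $C(\norm{\theta_p}_0 + \norm{\theta_q}_0 + \norm{\rho_p}_0 + \norm{\rho_q}_0)$ at the two preceding time levels, where the Lipschitz constant is uniform thanks to Lemma~\ref{lem:bound-continuous-g} and the $L^\infty$ bound of Remark~\ref{rem:linf}.

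\medskip

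Second, I apply the test-function combination of Theorem~\ref{th:preservation-ham}: the first error equation is tested against $\theta_q^{k+1}-\theta_q^k$, the second against $-(\theta_p^{k+1}-\theta_p^k)$, and the third is multiplied by $2 e_r^{k+1/2}$. The mixed time-derivative terms on the left-hand sides cancel as in the conservation proof, and integration by parts against $-\Delta$ produces the telescoping identity
\[
\tfrac12\bigl(\abs{\theta_p^{k+1}}_1^2 + \abs{\theta_q^{k+1}}_1^2\bigr) + \abs{e_r^{k+1}}^2 - \tfrac12\bigl(\abs{\theta_p^{k}}_1^2 + \abs{\theta_q^{k}}_1^2\bigr) - \abs{e_r^{k}}^2 \,=\, \Sigma^{k+1/2},
\]
where $\Sigma^{k+1/2}$ collects the consistency defects tested against $\theta^{k+1}-\theta^k$ and against $e_r^{k+1/2}$. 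Cauchy--Schwarz and Young's inequality bound each summand in $\Sigma^{k+1/2}$ by $C\tau(\tau^4 + N^{-4}) + C\tau(\abs{\theta_p^{k+1/2}}_1^2 + \abs{\theta_q^{k+1/2}}_1^2 + \abs{e_r^{k+1/2}}^2)$, where terms involving the discrete differences $\theta^{k+1}-\theta^k$ are first bounded through the stability inequality of Proposition~\ref{prop:H2-bound} and then absorbed.

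\medskip

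Summing from $0$ to $k$ and applying the discrete Gronwall inequality finally yields
\[
\tfrac12\abs{\theta_p^{k+1}}_1^2 + \tfrac12\abs{\theta_q^{k+1}}_1^2 + \abs{e_r^{k+1}}^2 \,\le\, C\exp\bigl([1-C\tau]^{-1}t^{k+1}\bigr)(\tau^4 + N^{-4}),
\]
and adding back the interpolation contribution bounded by Lemma~\ref{lem:interp-error} converts $\theta$ to $e$ and completes the proof. The main technical obstacle is the control of the nonlinear consistency term $r^{k+1/2}(\tilde g_i^{k+1/2} - g_i(p,q)(t^{k+1/2}))$: it combines the second-order accuracy of the extrapolation $\tilde g_i^{k+1/2}$ (which involves the previous two time levels, so an induction on $k$ is required to close the estimate) with a Lipschitz-type bound on $g_i$, and the latter relies essentially on Lemma~\ref{lem:bound-continuous-g} together with the uniform $L^\infty$ control from Remark~\ref{rem:linf} to keep the arguments of $g_i$ in a fixed bounded set throughout the induction.
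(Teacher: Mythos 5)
Your proposal follows essentially the same route as the paper: subtract the continuous SAV system evaluated at $t^{k+1/2}$ from the fully discrete scheme, split the nonlinear consistency defect as $e_r^{k+1/2}\tilde G_i^{k+1/2}+r(t^{k+1/2})e_{g,i}^{k+1/2}$ plus truncation terms, test with the antisymmetric combination of $\theta_q^{k+1}-\theta_q^k$ and $\theta_p^{k+1}-\theta_p^k$ together with $2\tau e_r^{k+1/2}$ so that the cross terms cancel and the $H^1$ seminorms telescope, bound $\norm{e_{g,i}^{k+1/2}}_0$ by a Lipschitz argument resting on Lemma~\ref{lem:bound-continuous-g} and Remark~\ref{rem:linf}, and close with the discrete Gronwall lemma. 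All of that matches Steps 1--3, 5 and 6 of the paper's proof.

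There is, however, one concrete gap: your treatment of the terms $\norm{\theta^{k+1}-\theta^k}_0^2$ left over from Young's inequality. You propose to bound these "through the stability inequality of Proposition~\ref{prop:H2-bound} and then absorb" them. That inequality controls the discrete time differences of the \emph{numerical solution} $(P^{k+1}-P^k,Q^{k+1}-Q^k)$, not of the error $\theta=U-I_Nu$, and even granting an analogous bound for $\theta$ it would only yield $\tau^2\sum_k\norm{(\theta^{k+1}-\theta^k)/\tau}_0^2\le C$, i.e.\ an $O(1)$ quantity after summation. Feeding an $O(1)$ (or even $O(\tau)$) bound into terms like $\f1{16}\norm{D^1\theta_q^{k+1}}_0^2$ destroys the $\tau^4$ rate: for Gronwall to deliver the stated estimate, each such term must be bounded by $C\tau\bigl(\norm{\nabla\theta_p^{k+1}}_0^2+\norm{\nabla\theta_q^{k+1}}_0^2+\cdots\bigr)+C\tau\bigl(\tau^4+N^{-4}\bigr)$. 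The paper obtains exactly this in its Step 4 by testing the error equations themselves with $2\tau\theta_p^{k+1}$ and $2\tau\theta_q^{k+1}$ and using $\scal{D^1_\tau\theta_p^{k+1}}{2\tau\theta_p^{k+1}}\ge\norm{D^1\theta_p^{k+1}}_0^2$, which produces the crucial extra factor of $\tau$ and the correct error structure. Your argument needs this dedicated estimate; the stability inequality cannot substitute for it.
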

\begin{proof}

    \textit{Step 1: Error equations. }We begin by evaluating the model \eqref{eq:SAV-schro} at time $t^{k+1/2}$
    \begin{equation*}
        \begin{cases}
            \p_t p(t^{k+1/2}) &= -\Delta q(t^{k+1/2}) + r(t^{k+1/2}) g_1(t^{k+1/2}),\\
            \p_t q(t^{k+1/2}) &= \Delta p(t^{k+1/2}) - r(t^{k+1/2})g_2(t^{k+1/2}), \\
            \f{\dd r}{\dd t}(t^{k+1/2})&= \frac{1}{2} \left[\scal{g_1(t^{k+1/2})}{\p_t q(t^{k+1/2})} + \scal{g_2(t^{k+1/2})}{\p_t p(t^{k+1/2})} \right].
        \end{cases}
    \end{equation*}
    Subtracting the above equations from  \eqref{eq:SAV-schro-fulldiscr} yields 
    \begin{equation}
        \begin{cases}
            \f{e_p^{\kk}-e_p^k}{\tau} + T_p^{k+1/2} &= -\Delta\left(e_q^{k+1/2} +  \ov T^{k+1/2}_q \right)  +R^{k+1/2} \tilde{G}^{k+1/2}_1 - r(t^{k+1/2}) g_1(t^{k+1/2}),\\
            \f{e_q^{\kk}-e_q^k}{\tau} + T_q^{k+1/2} &= \Delta \left(e_p^{k+1/2}+\ov T^{k+1/2}_p \right)- R^{k+1/2}\tilde G_2^{k+1/2} + r(t^{k+1/2})g_2(t^{k+1/2}), \\
            \f{e_r^\kk-e_r^k}{\tau} + T^{k+1/2}_r&= \frac{1}{2} \big[ \scal{\tilde G_1^{k+1/2}}{\f{Q^{k+1}-Q^k}{\tau}} + \scal{\tilde G_2^{k+1/2}}{\f{P^{k+1}-P^k}{\tau}}\\ & -\scal{g_1(t^{k+1/2})}{\p_t q(t^{k+1/2})} - \scal{g_2(t^{k+1/2})}{\p_t p(t^{k+1/2})} \big].
        \end{cases}
        \label{eq:step-1-eqs}
    \end{equation}
    We introduce the error
    \[
        e_{g,1}^{k+1/2} = \tilde G_1^{k+1/2}-g_1(t^{k+1/2}).  
    \]
    The rightmost terms of the two first equations of \eqref{eq:step-1-eqs} can be replaced by 
    \begin{equation}
        R^{k+1/2}\tilde{G}_1^{k+1/2} - r(t^{k+1/2})g_1(t^{k+1/2}) = \tilde{G}_1^{k+1/2}\left(e^{k+1/2}_r  + \ov T^{k+1/2}_r \right) + r(t^{k+1/2})e_{g,1}^{k+1/2},  
        \label{eq:error-eq-tricks1}
    \end{equation}
    and 
    \begin{equation}
        R^{k+1/2}\tilde{G}_2^{k+1/2} - r(t^{k+1/2})g_2(t^{k+1/2}) = \tilde{G}_2^{k+1/2}\left(e^{k+1/2}_r  + \ov T^{k+1/2}_r \right) + r(t^{k+1/2})e_{g,2}^{k+1/2}. 
        \label{eq:error-eq-tricks2}
    \end{equation}
    Similarly, we have 
    \begin{equation}
        \begin{aligned}
            \f12&\left[\scal{\tilde G_1^{k+1/2}}{\f{Q^{k+1}-Q^k}{\tau}}   -\scal{g_1(t^{k+1/2})}{\p_t q(t^{k+1/2})} \right] \\
            &= \f12\left[\scal{\tilde{G}_1^{k+1/2}}{\f{e_q^{k+1}-e_q^k}{\tau}+T_q^{k+1/2}} + \scal{e_{g,1}^{k+1/2}}{\p_t q(t^{k+1/2})}  \right] ,
        \end{aligned}
        \label{eq:error-eq-tricks3}
    \end{equation}
    and
    \begin{equation}
        \begin{aligned}
            \f12 &\left[\scal{\tilde G_2^{k+1/2}}{\f{P^{k+1}-P^k}{\tau}} - \scal{g_2(t^{k+1/2})}{\p_t p(t^{k+1/2})}\right]\\
            &=\f12\left[  \scal{\tilde{G}_2^{k+1/2}}{\f{e_p^{k+1}-e_p^k}{\tau}+T_p^{k+1/2}} + \scal{e_{g,2}^{k+1/2}}{\p_t p(t^{k+1/2})}   \right].
        \end{aligned}
        \label{eq:error-eq-tricks4}
    \end{equation}
    Plugging \eqref{eq:error-eq-tricks1}, \eqref{eq:error-eq-tricks2},\eqref{eq:error-eq-tricks3}, and \eqref{eq:error-eq-tricks4} into \eqref{eq:step-1-eqs}, we thus obtain
    \begin{equation*}
        \begin{cases}
            \f{e_p^{\kk}-e_p^k}{\tau} + T_p^{k+1/2} &= -\Delta\left(e_q^{k+1/2} +  \ov T^{k+1/2}_q \right)  +\tilde{G}_1^{k+1/2}\left(e^{k+1/2}_r  + \ov T^{k+1/2}_r \right) + r(t^{k+1/2})e_{g,1}^{k+1/2},\\
            \f{e_q^{\kk}-e_q^k}{\tau} + T_q^{k+1/2} &= \Delta \left(e_p^{k+1/2}+\ov T^{k+1/2}_p \right) - \tilde{G}_2^{k+1/2}\left(e^{k+1/2}_r + \ov T^{k+1/2}_r \right) - r(t^{k+1/2})e_{g,2}^{k+1/2}, \\
            \f{e_r^\kk-e_r^k}{\tau} + T^{k+1/2}_r&=\f12\big[\scal{\tilde{G}_1^{k+1/2}}{\f{e_q^{k+1}-e_q^k}{\tau}+T_q^{k+1/2}} + \scal{e_{g,1}^{k+1/2}}{\p_t q(t^{k+1/2})}\\
            & +\scal{\tilde{G}_2^{k+1/2}}{\f{e_p^{k+1}-e_p^k}{\tau}+T_p^{k+1/2}} + \scal{e_{g,2}^{k+1/2}}{\p_t p(t^{k+1/2})}  \big].
        \end{cases}
    \end{equation*}
    Using the decomposition of the error \eqref{eq:decomp-error}, we furthermore obtain 
    \begin{equation}
        \begin{cases}
            \f{\theta_p^{\kk}-\theta_p^k}{\tau} + \Delta\left(\f{\theta_q^\kk+\theta_q^k}{2}\right)  &= -\f{\rho_p^{\kk}-\rho_p^k}{\tau}  -\Delta\left(\rho_q^{k+1/2} +  \ov T^{k+1/2}_q \right)  +\tilde{G}_1^{k+1/2}\left( e^{k+1/2}_r  + \ov T^{k+1/2}_r \right) \\
            &+ r(t^{k+1/2})e_{g,1}^{k+1/2}- T_p^{k+1/2},\\
            \f{\theta_q^{\kk}-\theta_q^k}{\tau}-\Delta \left(\f{\theta_p^\kk+\theta_p^k}{2}\right)  &= -\f{\rho_q^{\kk}-\rho_q^k}{\tau}+ \Delta \left(\rho_p^{k+1/2}+\ov T^{k+1/2}_p \right)- \tilde{G}_2^{k+1/2}\left(e^{k+1/2}_r + \ov T^{k+1/2}_r \right) \\
            &- r(t^{k+1/2})e_{g,2}^{k+1/2}- T_q^{k+1/2}, \\
            \f{e_r^\kk-e_r^k}{\tau} + T^{k+1/2}_r&=\f12\big[\scal{\tilde{G}_1^{k+1/2}}{\f{e_q^{k+1}-e_q^k}{\tau}+T_q^{k+1/2}} + \scal{e_{g,1}^{k+1/2}}{\p_t q(t^{k+1/2})}\\
            & +\scal{\tilde{G}_2^{k+1/2}}{\f{e_p^{k+1}-e_p^k}{\tau}+T_p^{k+1/2}} + \scal{e_{g,2}^{k+1/2}}{\p_t p(t^{k+1/2})}  \big].
        \end{cases}
        \label{eq:step-1-eqs3}
    \end{equation}

    \textit{Step 2. Error estimate formula. }
    We use the following notations to make the results more compact
    \[
        D_\tau^1 \theta_p^{\kk} =  \f{\theta_p^{\kk}-\theta_p^k}{\tau},\quad
        D^1 \theta_p^{\kk} =  \theta_p^{\kk}-\theta_p^k.
    \]

    Taking the inner product of the first equation of the system \eqref{eq:step-1-eqs3} with $-D^1 \theta_q^\kk$ and the second with $ D^1 \theta_p^\kk$, and summing the results, we also have, 
    \begin{equation}
        \begin{aligned}
        \f12D^1\norm{\nabla \theta_q^{k+1}}^2_{0} + \f12D^1 \norm{\nabla\theta_p^{k+1}}^2_{0} &= \scal{D_\tau^1 \rho_p^{\kk}}{D^1\theta_q^\kk} - \scal{D_\tau^1 \rho_q^{\kk}}{D^1 \theta_p^\kk} \\
        &- \scal{\nabla\rho_q^{k+1/2}}{\nabla D^1\theta_q^\kk}  - \scal{\nabla \rho_p^{k+1/2}}{\nabla D^1 \theta_p^\kk} \\
        &- \scal{\nabla \ov T^{k+1/2}_q }{\nabla D^1\theta_q^\kk} - \scal{\nabla \ov T^{K+1/2}_p}{\nabla D^1 \theta_p^\kk } \\
        & - \scal{\tilde{G}_1^{k+1/2}\left(e^{k+1/2}_r + \ov T^{k+1/2}_r \right)}{D^1\theta_q^\kk} \\
        &- \scal{\tilde{G}_2^{k+1/2}\left(e^{k+1/2}_r 
        + \ov T^{k+1/2}_r \right)}{D^1 \theta_p^\kk}  
        \\
        &-\scal{r(t^{k+1/2})e_{g,1}^{k+1/2}- T_p^{k+1/2}}{D^1\theta_q^\kk} \\
        &- \scal{r(t^{k+1/2})e_{g,2}^{k+1/2}+ T_p^{k+1/2}}{D^1 \theta_p^\kk}.
        \end{aligned}
        \label{eq:error-formula3}
    \end{equation}
    Multiplying the third equation of \eqref{eq:step-1-eqs3} by $2 \tau e^{k+1/2}_r$, we have 
    \begin{equation}
        \begin{aligned}
        D^1\abs{e_r^\kk}^2  &+ 2\tau T^{k+1/2}_r e^{k+1/2}_r - \tau {e^{k+1/2}_r} \big[\scal{\tilde{G}_1^{k+1/2}}{D^1_\tau \rho_q^\kk +T^{k+1/2}_q} \\
        &+ \scal{\tilde{G}_2^{k+1/2}}{D^1_\tau \rho_p^\kk+T^{k+1/2}_p} +  \scal{e^{k+1/2}_{g,1}}{\p_t q(t^{k+1/2})} + \scal{e^{k+1/2}_{g,2}}{\p_t p(t^{k+1/2})} \big] \\
        &=   {e^{k+1/2}_r} \left[\scal{\tilde{G}_1^{k+1/2}}{D^1 \theta_q^\kk} + \scal{\tilde{G}_2^{k+1/2}}{D^1 \theta_p^\kk} \right].
        \end{aligned}
        \label{eq:error-formula4}
    \end{equation}
    Using \eqref{eq:error-formula4} in \eqref{eq:error-formula3}, we have 
    \begin{equation}
        \begin{aligned}
        \f12 D^1&\norm{\nabla \theta_q^{k+1}}^2_{0} + \f12D^1 \norm{\nabla\theta_p^{k+1}}^2_{0} + D^1\abs{e_r^{\kk}}^2  \\
        &= \scal{D_\tau^1 \rho_p^{\kk}}{D^1\theta_q^\kk} - \scal{D_\tau^1 \rho_q^{\kk}}{D^1 \theta_p^\kk} \\
        &- \scal{\nabla\left(\rho_q^{k+1/2}+ \ov T^{k+1/2}_q\right)}{ \nabla D^1\theta_q^\kk}  - \scal{\nabla \left(\rho_p^{k+1/2}+\ov T^{k+1/2}_p\right)}{\nabla D^1\theta_p^\kk} \\
        &-\ov T_r^{k+1/2}\left[\scal{\tilde G_1^{k+1/2}}{D^1\theta_q^\kk}+ \scal{\tilde G_2^{k+1/2}}{D^1\theta_p^\kk}\right]\\
        & -  2 \tau T^{k+1/2}_r e^{k+1/2}_r + \tau{e^{k+1/2}_r} \big[\scal{\tilde{G}_1^{k+1/2}}{D^1_\tau \rho_q^\kk+T^{k+1/2}_q} \\
        &+ \scal{\tilde{G}_2^{k+1/2}}{D^1_\tau\rho_p^\kk+T^{k+1/2}_p} +  \scal{e^{k+1/2}_{g,1}}{\p_t q(t^{k+1/2})} + \scal{e^{k+1/2}_{g,2}}{\p_t p(t^{k+1/2})} \big]  
        \\
        &-\scal{r(t^{k+1/2})e_{g,1}^{k+1/2}- T_p^{k+1/2}}{D^1\theta_q^\kk} \\
        &- \scal{r(t^{k+1/2})e_{g,2}^{k+1/2}+ T_p^{k+1/2}}{D^1\theta_p^\kk}.
        \end{aligned}
        \label{eq:error-formula5}
    \end{equation}

    \textit{Step 3. Inequalities for the terms on the right-hand side of \eqref{eq:error-formula5}. } 

    Now, we bound the right-hand side of \eqref{eq:error-formula5}. Using    Lemma \ref{lem:interp-error}, Lemma \ref{lem:error-trunc} and Young's inequality we have
    \begin{equation*}
        \begin{aligned}
            \scal{D^1_\tau \rho_p^{\kk}}{ D^1\theta_q^{k+1}} &\le 4 \norm{D^1_\tau \rho_p^{\kk}}^2_0 + \f1{16} \norm{D^1\theta_q^{k+1}}_0^2 \le C N^{-6} + \f1{16} \norm{D^1\theta_q^{k+1}}_0^2,
        \end{aligned}
    \end{equation*}
    \begin{equation*}
        \begin{aligned}
            -\scal{D_\tau^1\rho_q^{\kk}}{D^1\theta_p^{k+1}} \le 4 \norm{D_\tau^1\rho_q^{\kk}}_0^2 + \f1{16}\norm{D^1\theta_p^{k+1}}_0^2 \le C N^{-6} + \f1{16}\norm{D^1 \theta_p^{k+1}}_0^2.
        \end{aligned}
    \end{equation*}
    Then, from Theorem \eqref{th:preservation-ham}, we have 
    \begin{equation*}
        \begin{aligned}
            - &\scal{\nabla\rho_q^{k+1/2}}{\nabla D^1\theta_q^{k+1}}  - \scal{\nabla \rho_p^{k+1/2}}{\nabla D^1 \theta_p^{k+1}} \\
            &\le  \left(\norm{\nabla \rho_q^{k+1/2}}_{0}^2 \norm{\nabla D^1  \theta_q^{k+1}}^2_0 +\norm{\nabla \rho_p^{k+1/2}}_{0}^2 \norm{ \nabla D^1 \theta_p^{k+1}}^2_0 \right) \\
            &\le C N^{-4} ,
        \end{aligned}
    \end{equation*}
    and
    \begin{equation*}
        \begin{aligned}
            - &\scal{\nabla \ov T^{k+1/2}_q }{ \nabla D^1\theta_q^{k+1}} - \scal{\nabla \ov T^{K+1/2}_p}{ \nabla D^1\theta_p^{k+1}} \\
            &\le \left(\norm{\nabla \ov T^{k+1/2}_q }_0^2  \norm{\nabla D^1 \theta_q^{k+1}}^2_0+ \norm{\nabla \ov T^{K+1/2}_p}_0^2\norm{\nabla D^1\theta_p^{k+1}}^2_0 \right)  \\
            &\le C\tau^4 ,
        \end{aligned}
    \end{equation*}
    For the rest of the terms on the right-hand side of \eqref{eq:error-formula5}, we use Lemma \ref{lem:error-trunc}, and Proposition \ref{prop:H2-bound} together with Lemma \ref{lem:bound-continuous-g}, and Remark \ref{rem:born-r}, to obtain 
    \begin{equation}
        \begin{aligned}
            -&\ov T_r^{k+1/2}\left[\scal{\tilde G_1^{k+1/2}}{D^1\theta_q^\kk}+ \scal{\tilde G_2^{k+1/2}}{D^1\theta_p^\kk}\right]\\
            & \le 4\abs{\ov T_r^{k+1/2}}^2\left(\norm{\tilde G_1^{k+1/2}}_0^2+\norm{\tilde G_1^{k+1/2}}_0^2\right) + \f1{16}\left(\norm{D^1\theta_q^\kk}_0^2 + \norm{D^1 \theta_p^\kk}_0^2 \right)\\
            &\le C \tau^4 + \f1{16}\left(\norm{D^1\theta_q^\kk}_0^2 + \norm{D^1\theta_p^\kk}_0^2 \right),
        \end{aligned}   
        \label{eq:ineq-err-r1}
    \end{equation}
    \begin{equation*}
        \begin{aligned}
            -  2\tau T^{k+1/2}_r e^{k+1/2}_r  \le C\tau \left( \norm{T^{k+1/2}_r}_0^2 + \abs{e^{k+1}_r}^2 + \abs{e^k_r}^2\right)\le C \tau^5 + \tau \abs{e^{k+1}_r}^2 + \tau\abs{e^k_r}^2,
        \end{aligned}
    \end{equation*}
    \begin{equation*}
        \begin{aligned}
            \tau{e^{k+1/2}_r}& \scal{\tilde{G}_1^{k+1/2}}{D^1_\tau\rho_q^\kk + T^{k+1/2}_q}\\
            &\le  \f\tau2 \norm{\tilde{G}_1^{k+1/2}}_0^2 \left( \norm{D^1_\tau\rho_q^\kk}_0^2 + \norm{ T^{k+1/2}_q}_0^2 + \abs{e_r^\kk}^2 + \abs{e_r^k}^2\right) \\
            &\le C \tau \left(N^{-6} + \tau^4 + \abs{e_r^\kk}^2 + \abs{e_r^k}^2 \right),
        \end{aligned}
    \end{equation*}
    \begin{equation*}
        \begin{aligned}
            \tau{e^{k+1/2}_r} &\scal{\tilde{G}_2^{k+1/2}}{D^1_\tau\rho_p^\kk+ T^{k+1/2}_p}\\
            &\le  \f\tau2 \norm{\tilde{G}_2^{k+1/2}}_0^2 \left( \norm{D^1_\tau\rho_p^\kk}_0^2 + \norm{ T^{k+1/2}_p}^2_0+ \abs{e_r^\kk}^2 + \abs{e_r^k}^2\right) \\
            &\le C\tau \left(N^{-6} + \tau^4+ \abs{e_r^\kk}^2 + \abs{e_r^k}^2 \right),
        \end{aligned}
    \end{equation*}

    \begin{equation*}
        \begin{aligned}
            \tau {e^{k+1/2}_r}& \left[\scal{e^{k+1/2}_{g,1}}{\p_t q(t^{k+1/2})} + \scal{e^{k+1/2}_{g,2}}{\p_t p(t^{k+1/2})} \right]\\
            &\le \f\tau2\norm{\p_t q(t^{k+1/2})}^2_0\left( \norm{e^{k+1/2}_{g,1}}^2_0+ \abs{e_r^\kk}^2 + \abs{e_r^k}^2 \right) \\
            &+  \f\tau2\norm{\p_t p(t^{k+1/2})}^2_0\left( \norm{e^{k+1/2}_{g,2}}^2_0+ \abs{e_r^\kk}^2 + \abs{e_r^k}^2 \right) \\
            &\le  C\tau \left(\norm{e^{k+1/2}_{g,1}}^2_0+ \norm{e^{k+1/2}_{g,2}}^2_0 + \abs{e_r^\kk}^2 + \abs{e_r^k}^2  \right),
        \end{aligned}
    \end{equation*}
    and
    \begin{equation}
        \begin{aligned}
            -&\scal{r(t^{k+1/2})e_{g,1}^{k+1/2}- T_p^{k+1/2}}{D^1\theta_q^{k+1}} - \scal{r(t^{k+1/2})e_{g,2}^{k+1/2}+ T_p^{k+1/2}}{D^1\theta_p^{k+1}}\\
            &\le 4\abs{r(t^{k+1/2}}^2 \left(\norm{e_{g,1}^{k+1/2}}^2_0 +\norm{e_{g,2}^{k+1/2}}^2_0 + \norm{T_p^{k+1/2}}^2_0+ \norm{T_q^{k+1/2}}^2_0  \right) \\
            &+ \f1{16}\left( \norm{D^1\theta_q^{k+1}}^2_0 +\norm{D^1\theta_p^{k+1}}^2_0 \right)\\
            &\le C \left(\norm{e_{g,1}^{k+1/2}}^2_0 +\norm{e_{g,2}^{k+1/2}}^2_0 + 2\tau^4\right)+  \f1{16}\left( \norm{D^1\theta_q^{k+1}}^2_0 +\norm{D^1\theta_p^{k+1}}^2_0 \right).
        \end{aligned}
        \label{eq:ineq-err-r6}
    \end{equation}
    \textit{Step 4. Estimating the terms in the inequalities ~\eqref{eq:ineq-err-r1}--\eqref{eq:ineq-err-r6}.} First, we aim to emiminate the terms $\norm{D^1\theta_p^{k+1}}^2_0$ and $\norm{D^1\theta_q^{k+1}}^2_0$ in the above inequalities. Taking the inner product of the first equation of \eqref{eq:step-1-eqs3} with $2\tau \theta_p^\kk$, we obtain  
    \begin{equation*}
        \begin{aligned}
        \scal{D^1_\tau\theta_p^{\kk}}{2\tau \theta_p^\kk}  &= 2\tau \scal{\nabla \theta_q^{k+1/2}}{\nabla  \theta_p^\kk}-2\tau\scal{D^1_\tau\rho_p^{\kk}}{ \theta_p^\kk}  +2\tau\scal{\nabla\left(\rho_q^{k+1/2} +  \ov T^{k+1/2}_q \right)}{ \nabla \theta_p^\kk}  \\
        &+ 2\tau \left(e^{k+1/2}_r+\ov T^{k+1/2}_r\right)\scal{\tilde{G}_1^{k+1/2}}{\theta_p^\kk}  
        + 2\tau \scal{r(t^{k+1/2})e_{g,1}^{k+1/2}- T_p^{k+1/2}}{\theta_p^\kk}.
        \end{aligned}
    \end{equation*}
    Knowing that 
    \[
        \scal{D^1_\tau\theta_p^{\kk}}{2\tau \theta_p^\kk} \ge \norm{D^1\theta^\kk_p}^2_0,
    \]
    we have 
    \begin{equation}
        \begin{aligned}
            \norm{D^1\theta^\kk_p}^2_0  &\le 2\tau \scal{\nabla \theta_q^{k+1/2}}{\nabla  \theta_p^\kk}-2\tau\scal{D^1_\tau\rho_p^{\kk}}{ \theta_p^\kk}  +2\tau\scal{\nabla\left(\rho_q^{k+1/2} +  \ov T^{k+1/2}_q \right)}{ \nabla \theta_p^\kk}  \\
        &+ 2\tau \left(e^{k+1/2}_r+\ov T^{k+1/2}_r\right)\scal{\tilde{G}_1^{k+1/2}}{\theta_p^\kk} + 2\tau \scal{r(t^{k+1/2})e_{g,1}^{k+1/2}- T_p^{k+1/2}}{\theta_p^\kk}.
        \end{aligned}
        \label{eq:estimate-Dtetha}
    \end{equation}
    Let us bound the terms on the right-hand side of \eqref{eq:estimate-Dtetha}. Using  Lemma \ref{lem:interp-error} we find that
    \begin{align*}
        2\tau \scal{\nabla \theta_q^{k+1/2}}{\nabla  \theta_p^\kk} \le \tau \left( \norm{\nabla \theta_q^{k+1}}_0^2 + \norm{\nabla \theta_q^{k}}_0^2+ \norm{\nabla \theta_p^\kk}_0^2\right),\\
        -2\tau\scal{D^1_\tau\rho_p^{\kk}}{ \theta_p^\kk} \le \tau\left( \norm{D^1_\tau\rho_p^{\kk}}_{H^{-1}(\Omega)}^2 + \norm{\nabla \theta_p^\kk}^2_0\right)\le \tau\left( N^{-6} + \norm{\nabla \theta_p^\kk}^2_0\right), \\
        2\tau\scal{\nabla\left(\rho_q^{k+1/2} +  \ov T^{k+1/2}_q \right)}{ \nabla \theta_p^\kk} \le \tau \left( C N^{-4}+C\tau^4 + \norm{\nabla \theta_p^\kk}_0^2\right),\\
        2\tau \scal{r(t^{k+1/2})e_{g,1}^{k+1/2}- T_p^{k+1/2}}{\theta_p^\kk}\le \tau \left(C\norm{e_{g,1}^{k+1/2}}_0^2 + \tau^4 + C\norm{\nabla \theta_p^{k+1}}^2_0 \right),
    \end{align*}
    where we have used the Poincaré inequality to obtain the last inequality. 
    Plugging the previous inequalities into \eqref{eq:estimate-Dtetha}, we obtain
    \begin{equation}
        \begin{aligned}
        \norm{D^1\theta^\kk_p}^2_0  &\le \tau \left( \norm{\nabla \theta_q^{k+1}}_0^2 + \norm{\nabla \theta_q^{k}}_0^2 + C N^{-4}+ C\tau^4 +C\norm{e_{g,1}^{k+1/2}}_0^2  + C\norm{\nabla \theta_p^{k+1}}^2_0 \right).\label{est1}
        \end{aligned}
    \end{equation} 

    Similarly, taking the inner product of the second equation of \eqref{eq:step-1-eqs3} with $2\tau \theta_q^\kk$ and repeating the same steps as before, we obtain 
    \begin{equation}
        \begin{aligned}
        \norm{D^1\theta^\kk_q}^2_0  &\le \tau \left( \norm{\nabla \theta_p^{k+1}}_0^2 + \norm{\nabla \theta_p^{k}}_0^2 + C N^{-4}+ C\tau^4 +C\norm{e_{g,2}^{k+1/2}}_0^2  + C\norm{\nabla \theta_q^{k+1}}^2_0 \right).
        \end{aligned}\label{est2}
    \end{equation} 

    \textit{Step 5. Estimating $\norm{e_{g,1}^{k+1/2}}_0^2$ and $\norm{e_{g,2}^{k+1/2}}_0^2$.} Using the notations
    \[
        S(p,q) = \sqrt{\mathcal{E}_1(p,q) + C},
    \]
    and
    \[
        N_1(p,q) = \f{\delta }{\delta q} \mathcal{E}_1(p,q), \quad N_2(p,q) = \f{\delta }{\delta p} \mathcal{E}_1(p,q)
    \]

    we have that 
    \begin{equation*}
        \begin{aligned}
            e_{g,1}^{k+1/2} &= G_1(P^{k+1/2},Q^{k+1/2}) - g_1(p(t^{k+1/2}),q(t^{k+1/2}))\\
            &= \f{N_1(P^{k+1/2},Q^{k+1/2})}{S(P^{k+1/2},Q^{k+1/2})} - \f{N_1(p(t^{k+1/2}),q(t^{k+1/2}))}{S(p(t^{k+1/2}),q(t^{k+1/2}))}\\
            &= \f{N_1(P^{k+1/2},Q^{k+1/2})}{S(P^{k+1/2},Q^{k+1/2})} - \f{N_1(P^{k+1/2},Q^{k+1/2})}{S(p(t^{k+1/2}),q(t^{k+1/2}))} + \f{N_1(P^{k+1/2},Q^{k+1/2})}{S(p(t^{k+1/2}),q(t^{k+1/2}))} \\
            &- \f{N_1(p(t^{k+1/2}),q(t^{k+1/2}))}{S(p(t^{k+1/2}),q(t^{k+1/2}))}\\
            &= \f{N_1(P^{k+1/2},Q^{k+1/2})\left[\mathcal{E}_1(p(t^{k+1/2}),q(t^{k+1/2})) - \mathcal{E}_1(P^{k+1/2},Q^{k+1/2}) \right]}{S(P^{k+1/2},Q^{k+1/2})S(p(t^{k+1/2}),q(t^{k+1/2}))\left[S(P^{k+1/2},Q^{k+1/2})+S(p(t^{k+1/2}),q(t^{k+1/2}))\right]} \\
            &+ \f{N_1(P^{k+1/2},Q^{k+1/2})-N_1(p(t^{k+1/2}),q(t^{k+1/2}))}{S(p(t^{k+1/2}),q(t^{k+1/2}))}.
        \end{aligned}
    \end{equation*}
    \AP{From the smoothness assumption \eqref{eq:assumption-sol}, Lemma \ref{lem:bound-continuous-g}, and Remark~\ref{rem:linf}, we have }
    \begin{equation*}
        \norm{e_{g,1}^{k+1/2} }^2_0 \le C\left[\norm{P^{k+1/2}-p(t^{k+1/2})}^2_0+\norm{Q^{k+1/2}-q(t^{k+1/2})}^2_0\right].
    \end{equation*}
    Then, using the notation \eqref{eq:decomp-error} and Lemma \ref{lem:error-trunc}, we obtain 
    \begin{equation*}
        \begin{aligned}
        \norm{e_{g,1}^{k+1/2} }^2_0 &\le C \left[ \norm{\theta^{k+1}_p}_0^2 + \norm{\theta^{k}_p}_0^2 + \norm{\theta^{k+1}_q}_0^2 + \norm{\theta^{k}_q}_0^2 + \tau^3 + N^{-4}\right]\\
        &\le     C \left[ \norm{\nabla \theta^{k+1}_p}_0^2 + \norm{\nabla \theta^{k}_p}_0^2 + \norm{\nabla \theta^{k+1}_q}_0^2 + \norm{\nabla \theta^{k}_q}_0^2 + \tau^3 + N^{-4}\right].
        \end{aligned}
    \end{equation*}
    Similarly, we have  
    \begin{equation*}
        \norm{e_{g,2}^{k+1/2} }^2_0 \le  C \left[ \norm{\nabla \theta^{k+1}_q}_0^2 + \norm{\nabla \theta^{k}_q}_0^2 + \norm{\nabla \theta^{k+1}_p}_0^2 + \norm{\nabla \theta^{k}_p}_0^2 + \tau^3 + N^{-4}\right].
    \end{equation*}
    \textit{Step 6. Discrete Gronwall Lemma.} The above  two estimates together with \eqref{est1} and \eqref{est2} imply
    \begin{equation*}
        \begin{aligned}
        \f12D^1&\norm{\nabla \theta_q^{k+1}}^2_{0} + \f12D^1 \norm{\nabla\theta_p^{k+1}}^2_{0} + D^1\abs{e_r^{\kk}}^2  \\
        &\le \tau C\big[\norm{\nabla \theta^{k+1}_p}_0^2 + \norm{\nabla \theta^{k}_p}_0^2 + \norm{\nabla \theta^{k+1}_q}_0^2 + \norm{\nabla \theta^{k}_q}_0^2 + \abs{e_r^{k+1}}^2 + \abs{e_r^{k}}^2\big] + C\left[\tau^4+N^{-4}\right].
        \end{aligned}
    \end{equation*}
    Therefore, by the use of Gronwall's Lemma, we can conclude that
    \begin{equation*}
        \f12\norm{\nabla \theta_q^{k+1}}^2_{0} + \f12\norm{\nabla\theta_p^{k+1}}^2_{0} + \abs{e_r^{\kk}}^2 \le C \exp\left(\left[1-C\tau \right]^{-1} t^{\kk}\right)\left( \tau^4 + N^{-4}\right).
    \end{equation*}
    \hfill\qed
\end{proof}

\section{Numerical experiments}\label{sec:num}
In this section we numerically confirm our theoretical convergence result given in Theorem \ref{thm:main} and illustrate the long time  energy conservation of the SAV method. 
\AP{In the following, the numerical results have been obtained from Algorithm 2. However, we want to emphasize that Algorithm 1 leads to the same results and has a comparable computational cost for $d=1$.}

Our numerical findings suggest the favorable energy preservation of the SAV method compared to classical splitting methods in certain applications such as for non-linearities with non-integer exponents which arise for instance in context of optical dark and power law solitons with surface plasmonic interactions \cite{crutcher_derivation_2011}. For the comparison we use the  classical first order Lie and second order Strang splitting which are known for their near energy preservation over long times,  see, e.g., \cite{Faou-geometric-2012}. 

In the numerical examples we plot the deviation of the exact Hamiltonian and the modified Hamiltonian  $\tilde H$, i.e., ${e}_{\tilde H}= \abs{H(t^k)-\tilde H^k}$, the error between the exact Hamiltonian and the discrete non-modified Hamiltonian
$e_H = \abs{H(u(t^k))- H(U^k)}$, as well as the $L^2$  error ${e}_u =\norm{\abs{U^k}-\abs{u(t^k)}}_{L^2(\Omega)}$. We choose the potential $V=0$ in the Schr\"odinger equation \eqref{eq:schro}.


\subsection{First test case: cubic nonlinearity}
In a first example we consider the nonlinear Schr\"odinger equation \eqref{eq:schro} with a cubic nonlinearity i.e.
\[
    f(\abs{u}^2) = \beta \abs{u}^2
\]
on the spatial domain $\Omega = [-32,32] $. In Figure  \ref{fig:order-cubic}  we choose a mesh size $h = 1/32$ and approximate the soliton solution \cite{Bao-Cubic-2013, antoine-SAV-2020}
\[
    u(x,t) = \f{a}{\sqrt{-\beta}}\text{sech}\left(a(x-vt) \right)\exp\left(ivx-0.5(v^2-a^2)t \right),
\] 
with the parameters $a=1$, $\beta=-1$ and $v=1$ up to $T=10$.  Figure \ref{fig:order-cubic} numerically confirms the second-order convergence of the SAV method. The numerical findings also suggest that the error constant of the Strang splitting method is slightly better than the one of the SAV method in this example.
\begin{figure}
  \begin{minipage}{.5\textwidth}
    \centering
    \includegraphics[width=.99\linewidth]{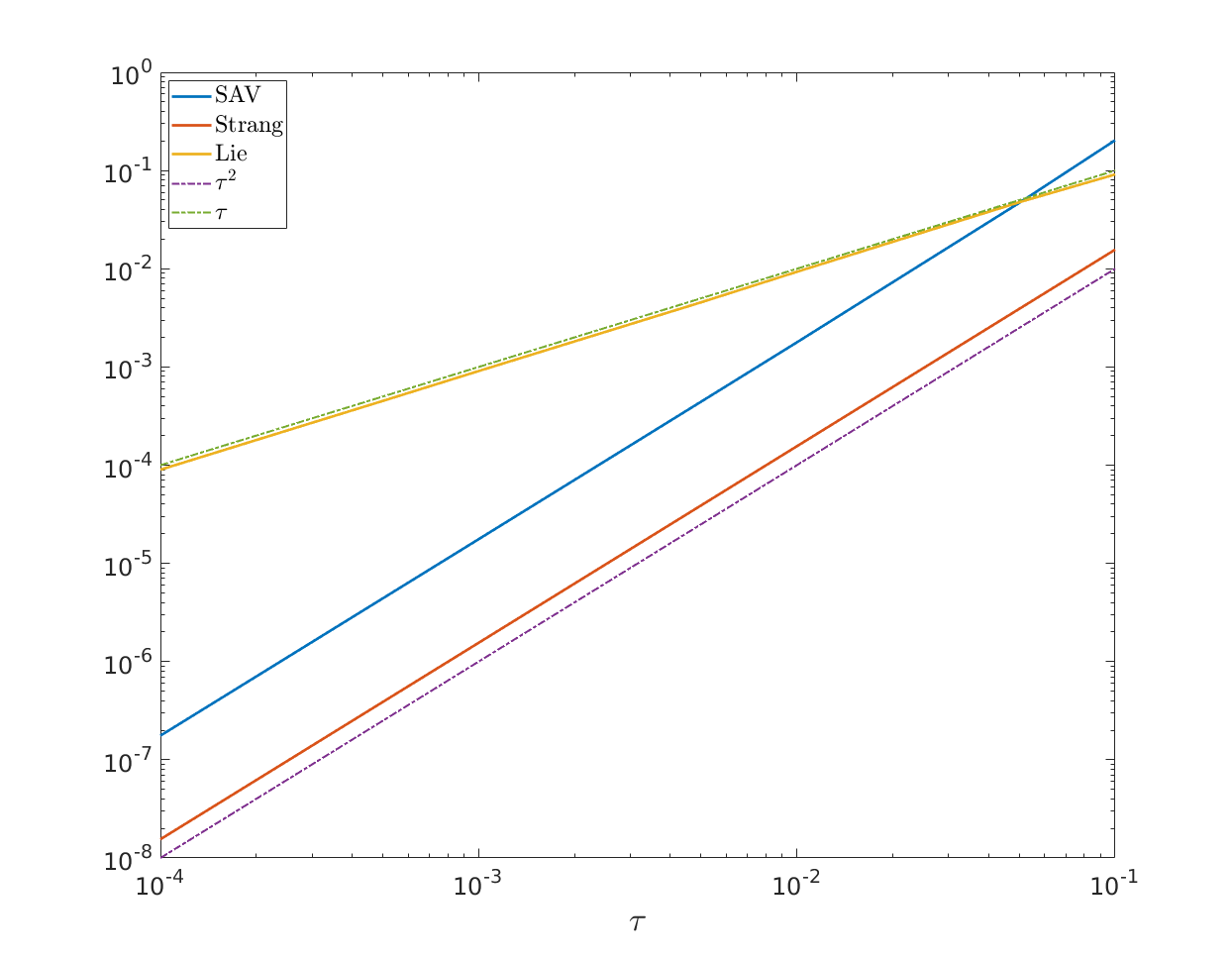}
  \end{minipage}%
  \begin{minipage}{.5\textwidth}
      \centering
      \includegraphics[width=.99\linewidth]{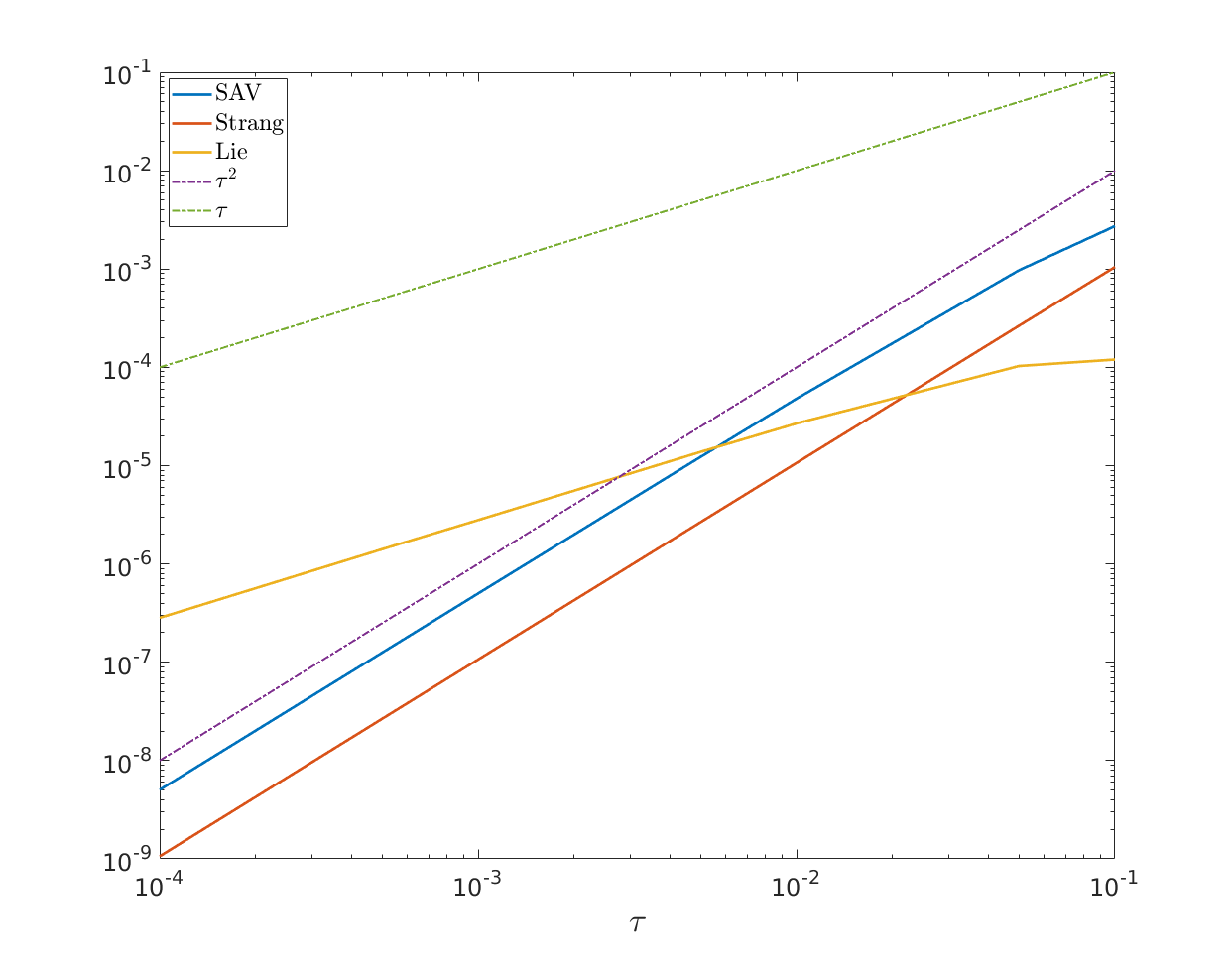}
  \end{minipage}
  \caption{Error ${e}_u$ (left) and $e_H$ (right) versus step size  $\tau$ at time $T=10$.}
  \label{fig:order-cubic}
\end{figure}
In Figure \ref{fig:cubic-evolv} we simulate the solitary wave 
\begin{equation*}
    u(t,x) = \f{\sqrt{2}e^{i t}}{\cosh(x)}
\end{equation*} 
on the domain $[-\f\pi{0.11}, \f \pi{0.11}]$ with $N=256$ collocation points and time step size $\tau = 0.01$. We illustrate the evolution of the errors $e_H$, $e_u$ and $e_{\tilde{H}}$ over long times, i.e., up to $T=1000$. Our numerical findings  confirm the conservation of the modified Hamiltonian by the SAV method, see Figure \ref{fig:cubic-evolv}.  We also observe that the SAV method preserves well the exact energy and $L^2$ norm over long times. Even though, the error $e_H$ of the Strang splitting seems favorable in this example, we have to stress that the modified Hamiltonian is closer to the value of the real Hamiltonian (see error $e_{\tilde H}$ on Figure \ref{fig:cubic-evolv}). 
\begin{figure}[h!] 
  \begin{minipage}{.5\textwidth}
    \centering
    \includegraphics[width=.99\linewidth]{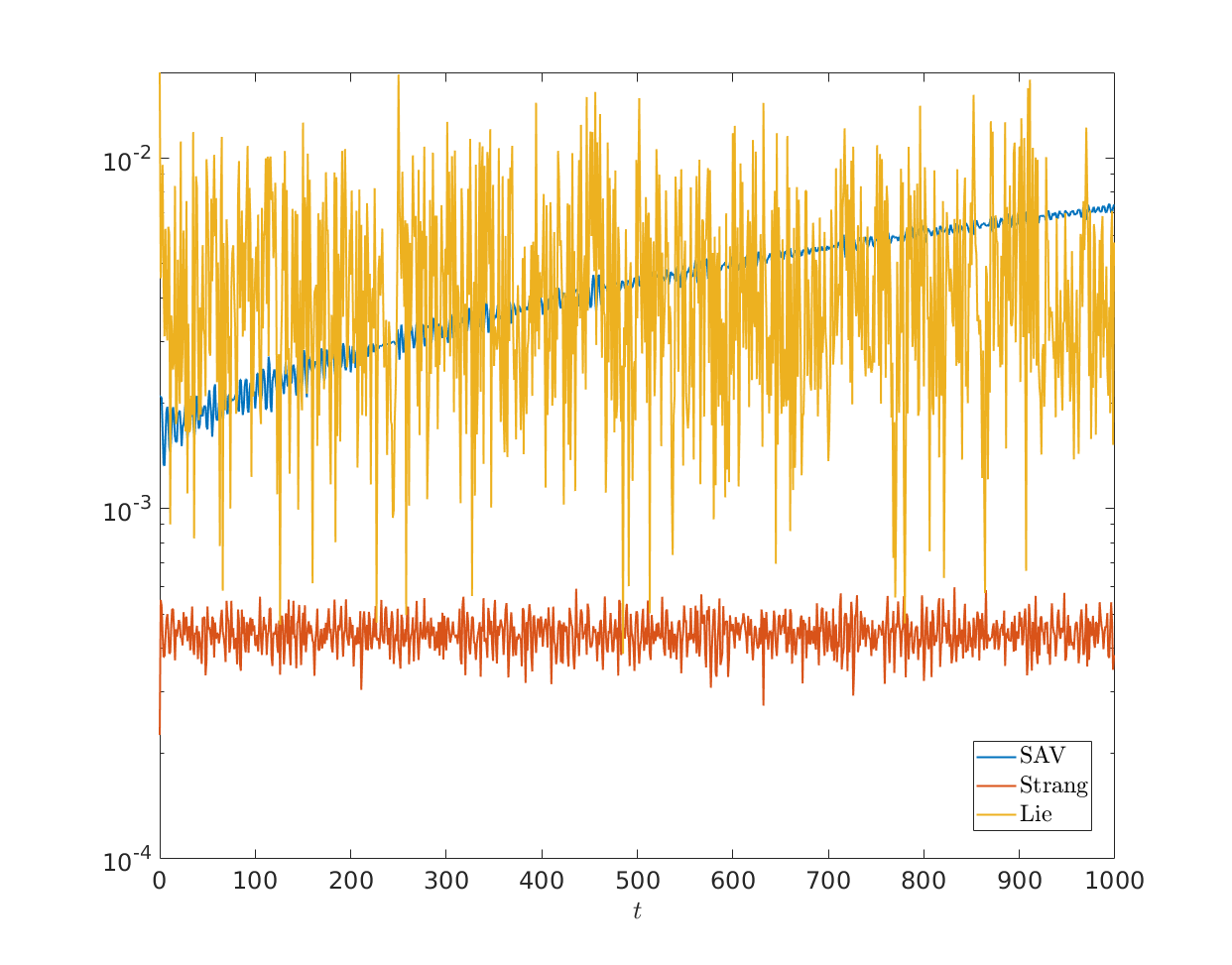}
  \end{minipage}%
  \begin{minipage}{.5\textwidth}
      \centering
      \includegraphics[width=.99\linewidth]{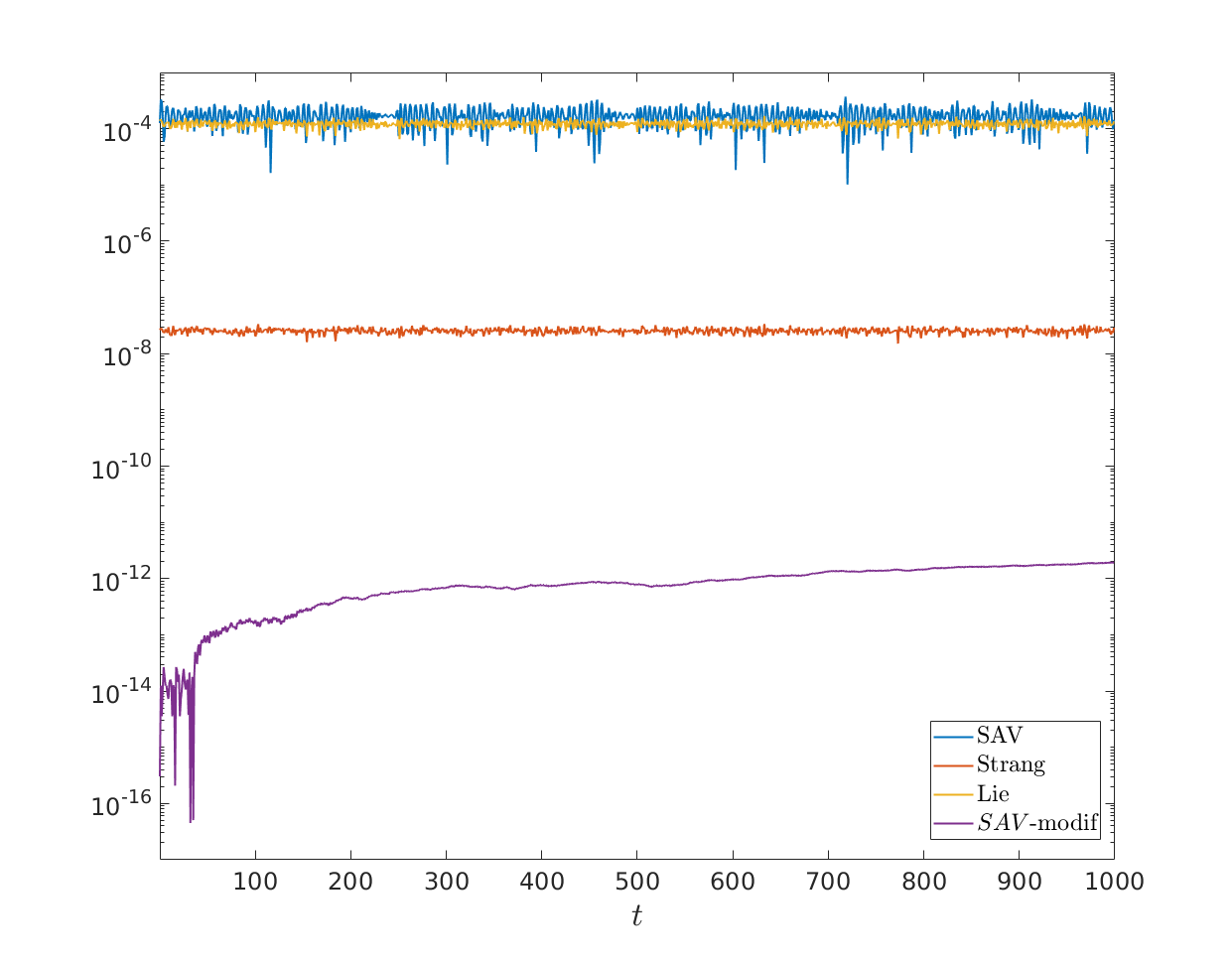}
  \end{minipage}
  \caption{Left Figure: error ${e}_u$ (left) through time. Right Figure: $e_H$ (blue, red, yellow) and $e_{\tilde H}$ (purple) through time.}
  \label{fig:cubic-evolv}
\end{figure}




\subsection{Second test case: cubic nonlinearity with non-smooth initial condition}
In this example we analyse the error behaviour of the SAV scheme in case of non-smooth initial data. For this purpose we solve the NLS equation with cubic nonlinearity with initial data of various regularity. More precisely, we choose $f(\abs{u}^2) = \beta \abs{u}^2$ with $\beta = 1$ and consider $u^0 \in H^\alpha$ with $\alpha = 3/2,2,3,5$ on the spatial  domain $\Omega = [-\pi, \pi]$ with  $N = 1024$ gridpoints. The discrete initial data of various regularity is generated  as proposed in \cite{schratz-low-2018}.


Figure \ref{fig:order-alpha} shows the convergence behaviour of the SAV scheme, and the two splitting methods for the initial data of different regularity. We find that if $\alpha < 3$, the SAV method does not maintain its second order convergence rate and for $\alpha = 2$, the SAV scheme reduces to first order. Decreasing the regularity of the initial condition even more, the convergence worsens and becomes less than  order $1$. A similar order reduction is observed for the splitting schemes, however, for the latter  the error starts to oscillate  for $\alpha < 3$.   Again, the error $e_H$ is favorable for the Strang splitting for all $\alpha$. However, the modified Hamiltonian is closer to the real Hamiltonian (see Figure \ref{fig:error-time-23} for $\alpha = \f23$).

\begin{figure}[h!] 
  \begin{minipage}[b]{0.5\linewidth}
    \centering
    \includegraphics[width=.98\linewidth]{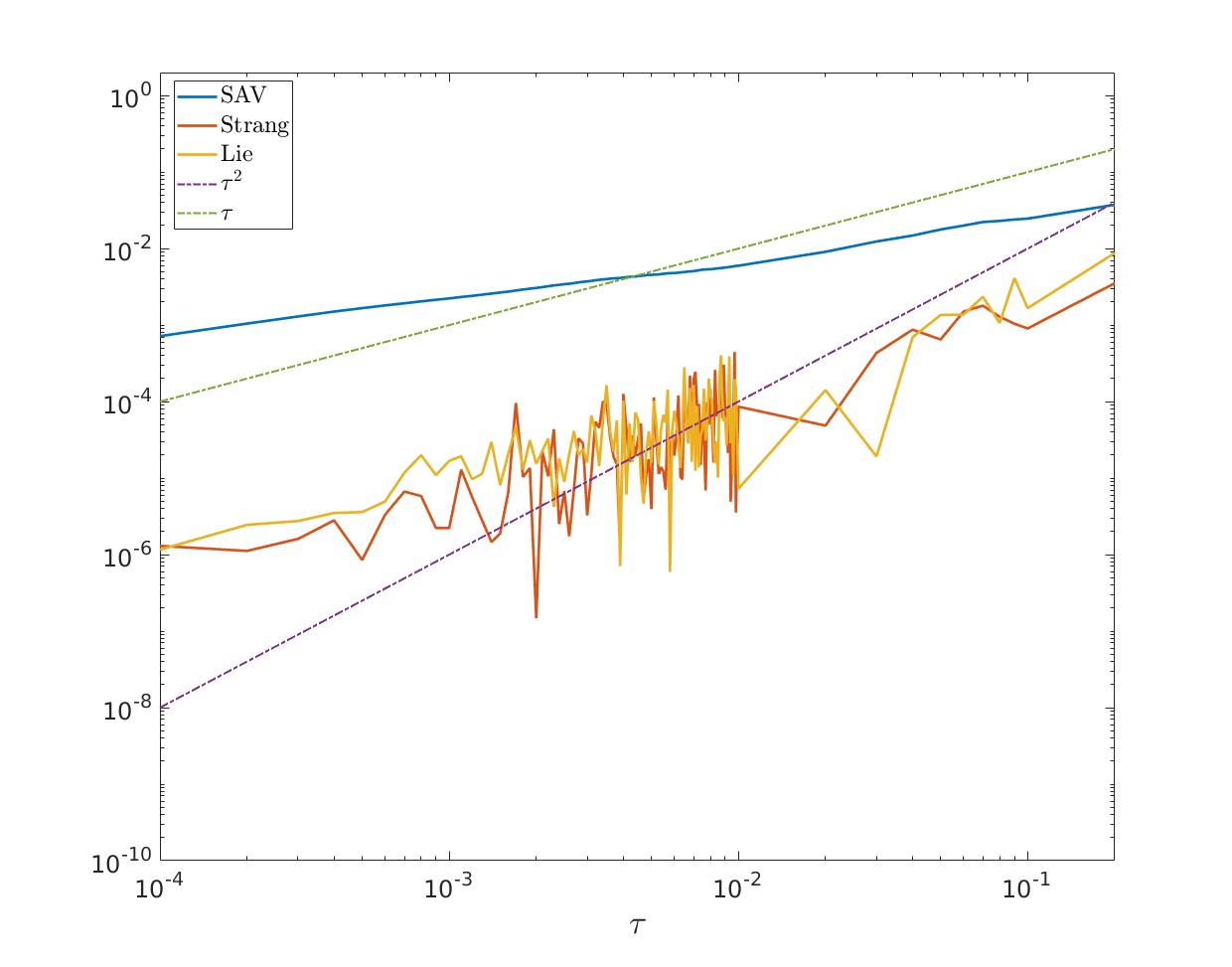} 
    \vspace{4ex}
  \end{minipage}
  \begin{minipage}[b]{0.5\linewidth}
    \centering
    \includegraphics[width=.98\linewidth]{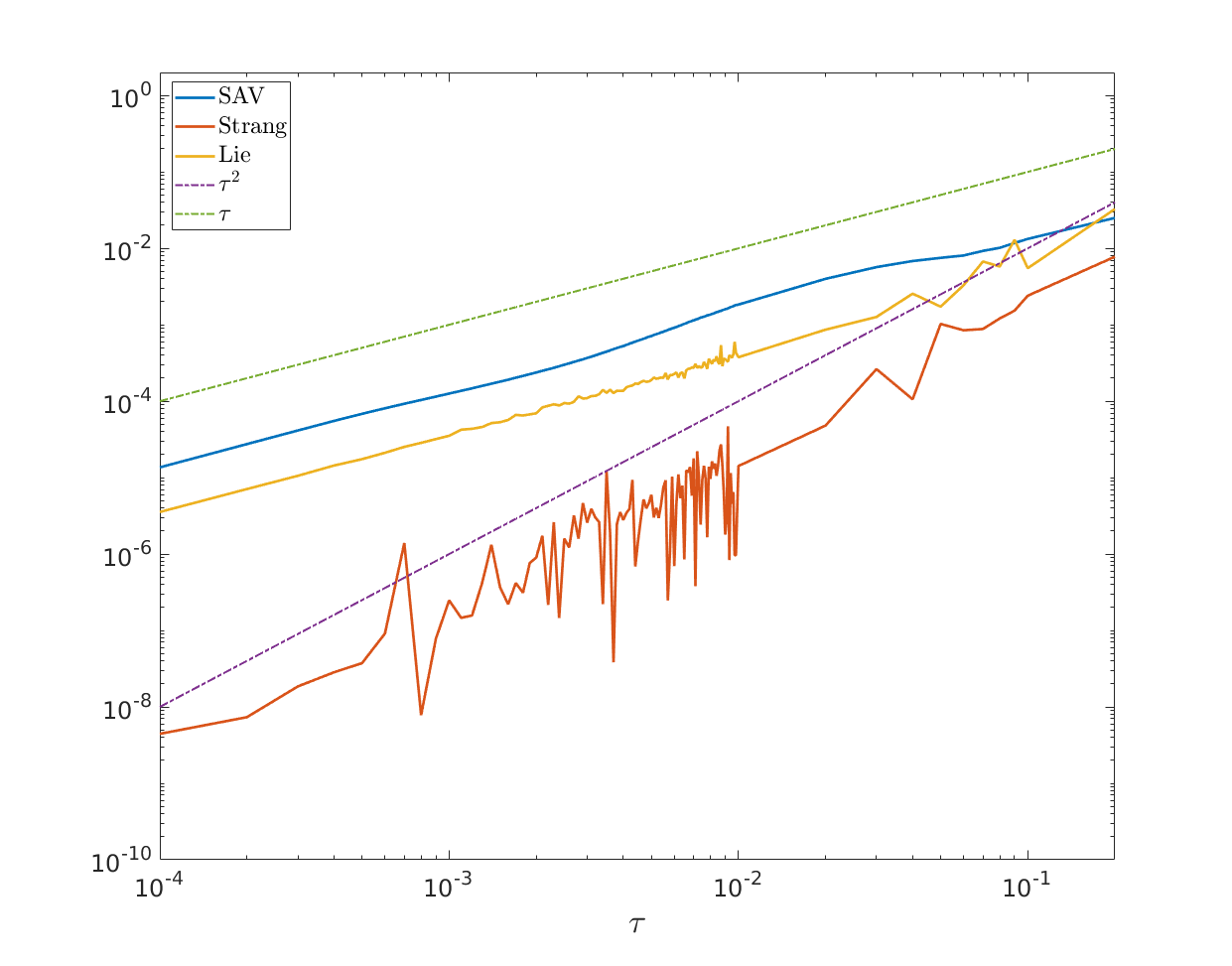} 
    \vspace{4ex}
  \end{minipage} 
  \begin{minipage}[b]{0.5\linewidth}
    \centering
    \includegraphics[width=.98\linewidth]{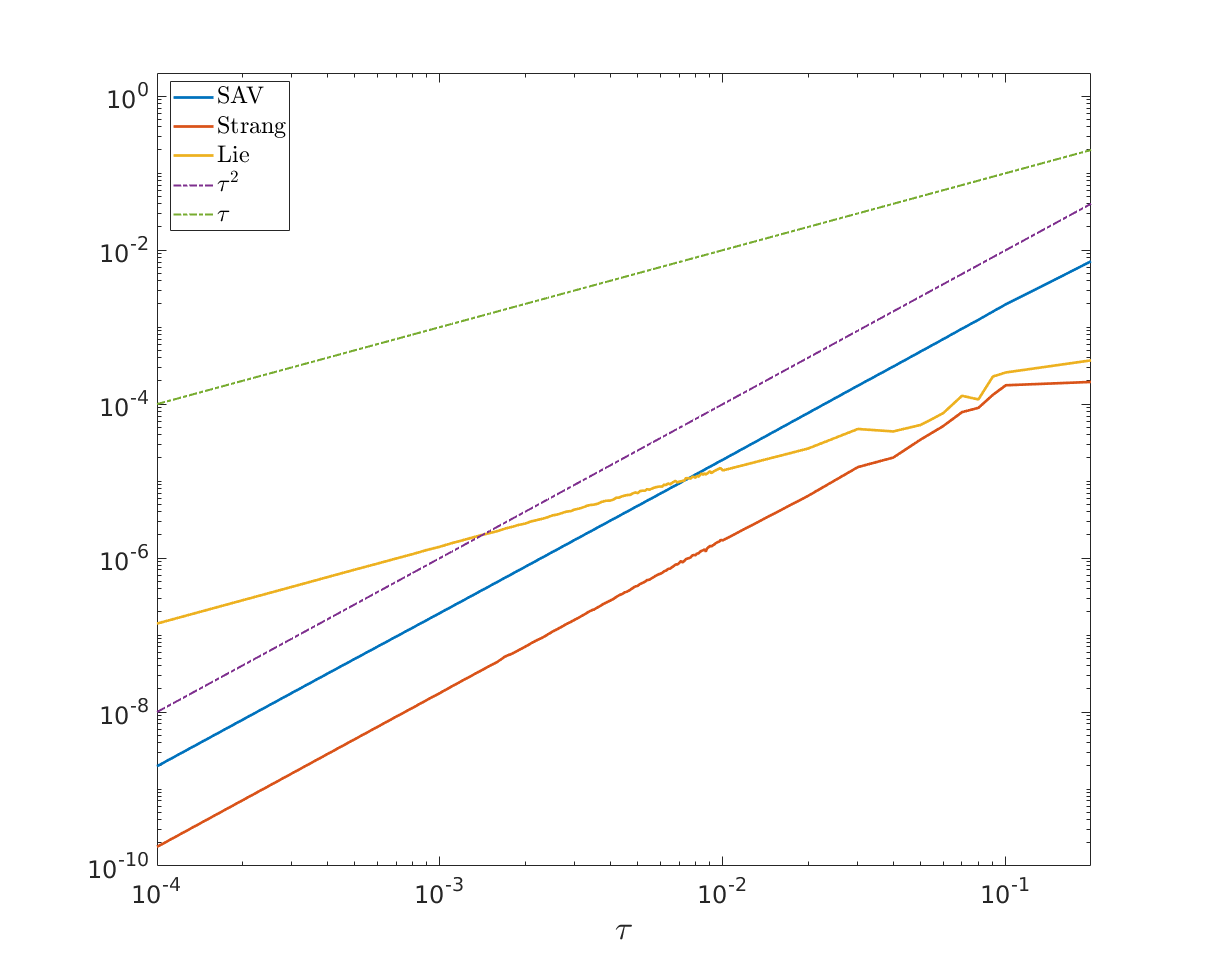} 
    \vspace{4ex}
  \end{minipage}
  \begin{minipage}[b]{0.5\linewidth}
    \centering
    \includegraphics[width=.98\linewidth]{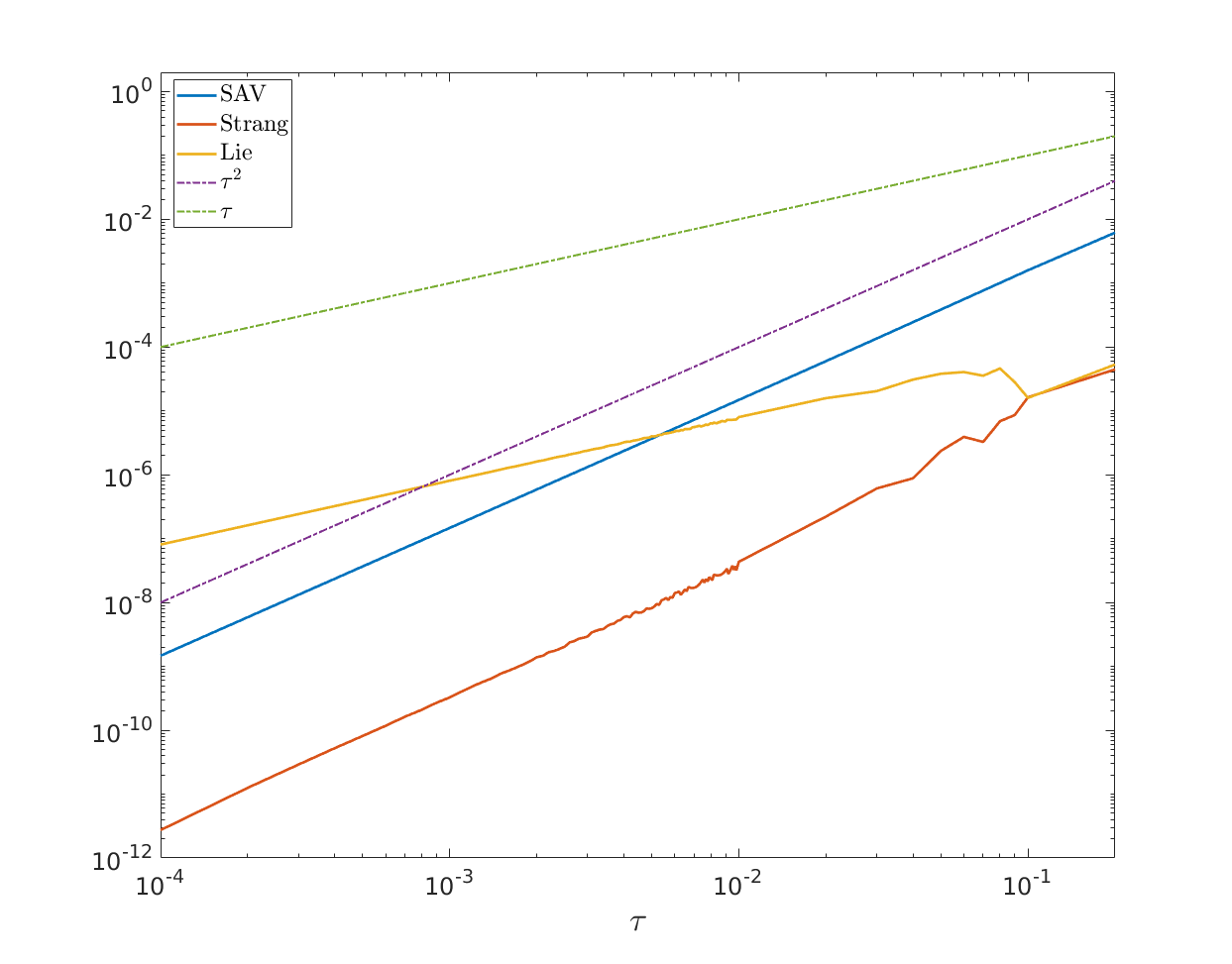} 
    \vspace{4ex}
  \end{minipage} 
  \caption{Error $e_H$ versus step size $\tau$ for the nonlinear Schrödinger equation starting from different initial conditions in $H^\alpha$ ($\alpha=\f23$ top-left, $\alpha=2$ top-right, $\alpha=3$ bottom left and $\alpha=5$ bottom-right). The dotted lines represent order $\tau$ (green) and $\tau^2$ (purple), respectively. }
   \label{fig:order-alpha} 
\end{figure}

\begin{figure}
    \centering
    \includegraphics[width=0.5\linewidth]{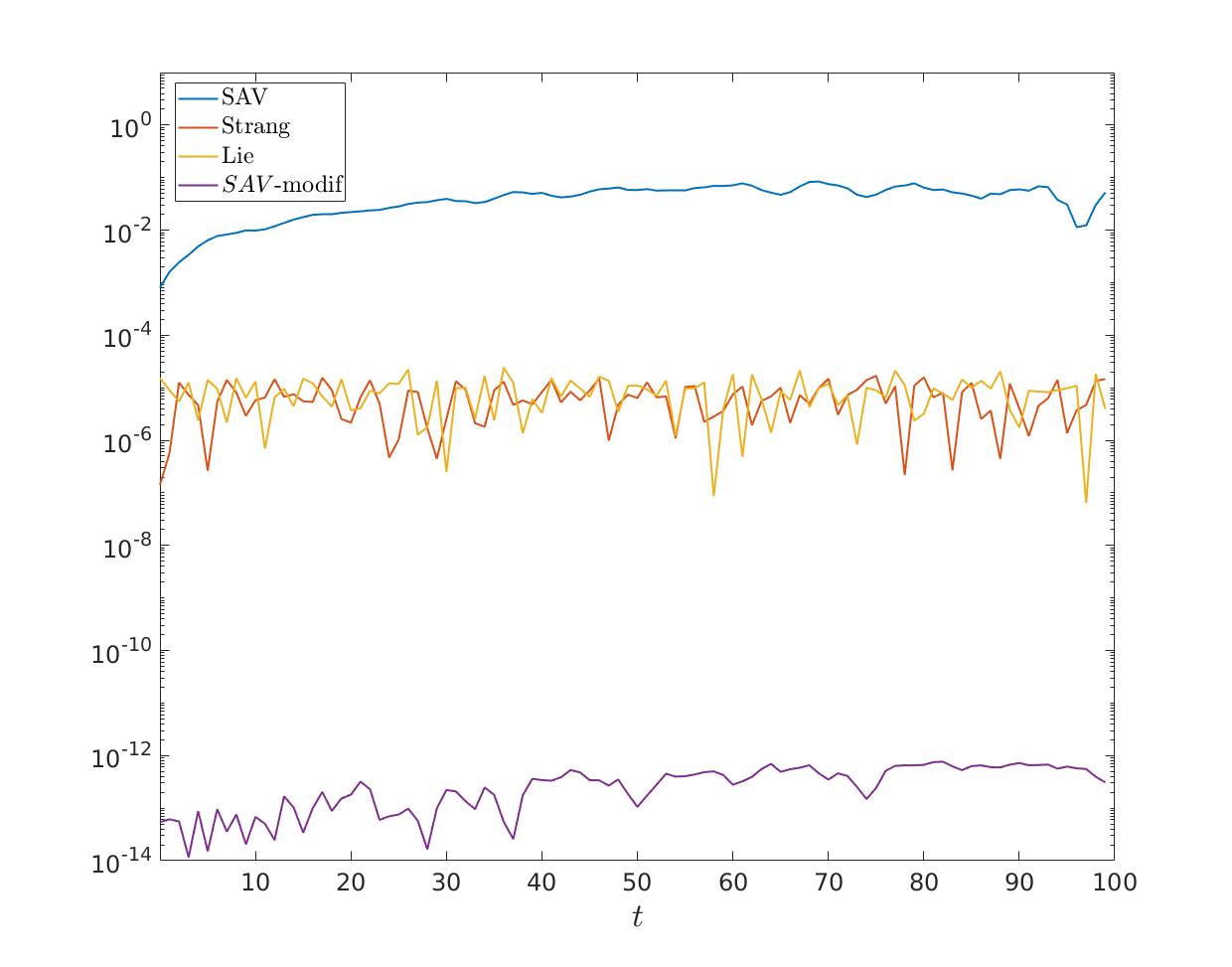}
    \caption{Error $e_H$ (blue, red, yellow) through time and $e_{\tilde H}$ (purple) through time for initial condition in $H^{\f23}$.}
    \label{fig:error-time-23}
\end{figure}

\subsection{Third test case: non-integer exponent}
In this example we consider the periodic nonlinear Schr\"odinger equation \eqref{eq:schro} with nonlinearities with non-integer exponents (\cite{crutcher_derivation_2011})
\[
    f(\abs{u}^2) = \beta \abs{u}^{4/\gamma},\quad \gamma >0
\]
where the Hamiltonian takes the form
\[
    H(u) = \int_\Omega \f12 \abs{\nabla u}^2 + \beta \frac{\gamma}{(4+2\gamma)}\abs{u}^{\f4\gamma+2} \dd x.
\]
We carry out simulations for various exponents $\gamma = 2,8/3,4,8$  up to time $T=10$ with smooth initial value
\[
    u(0,x) = \sin(x)\in C^\infty([-\pi,\pi]).
\]
The error $e_H$ for different exponents $\gamma$ is plotted in  Figure \ref{fig:order-exponent}. Our numerical findings suggest that  as $\gamma$ increases the splitting methods suffer from sever order reduction. This loss of convergence of splitting methods was also observed in \cite{schratz-low-2018}. The SAV method, on the other hand, retains its second order energy convergence for non-integer exponents.

\begin{figure}[h!] 
  \begin{minipage}[b]{0.5\linewidth}
    \centering
    \includegraphics[width=.98\linewidth]{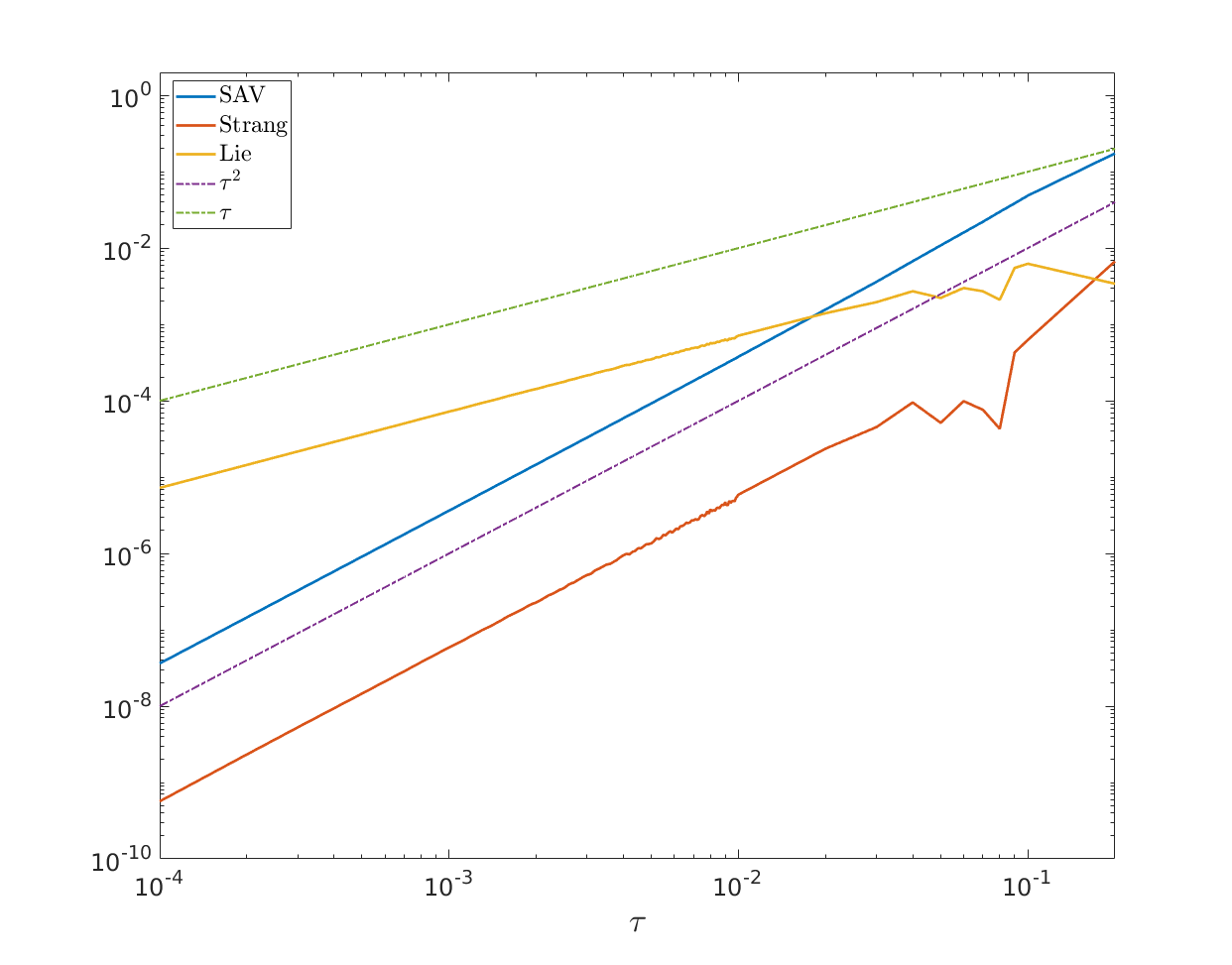} 
    \vspace{4ex}
  \end{minipage}
  \begin{minipage}[b]{0.5\linewidth}
    \centering
    \includegraphics[width=.98\linewidth]{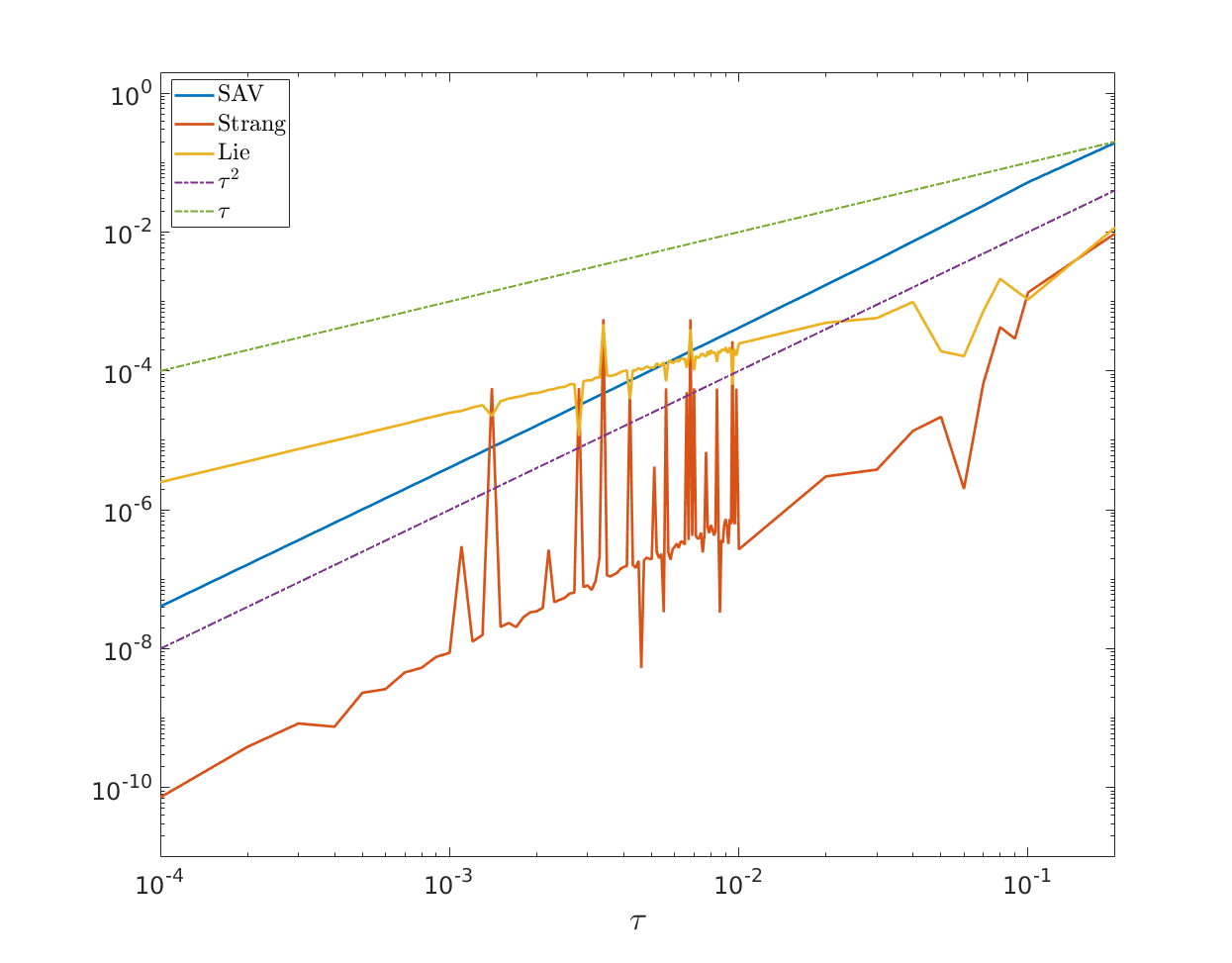} 
    \vspace{4ex}
  \end{minipage} 
  \begin{minipage}[b]{0.5\linewidth}
    \centering
    \includegraphics[width=.98\linewidth]{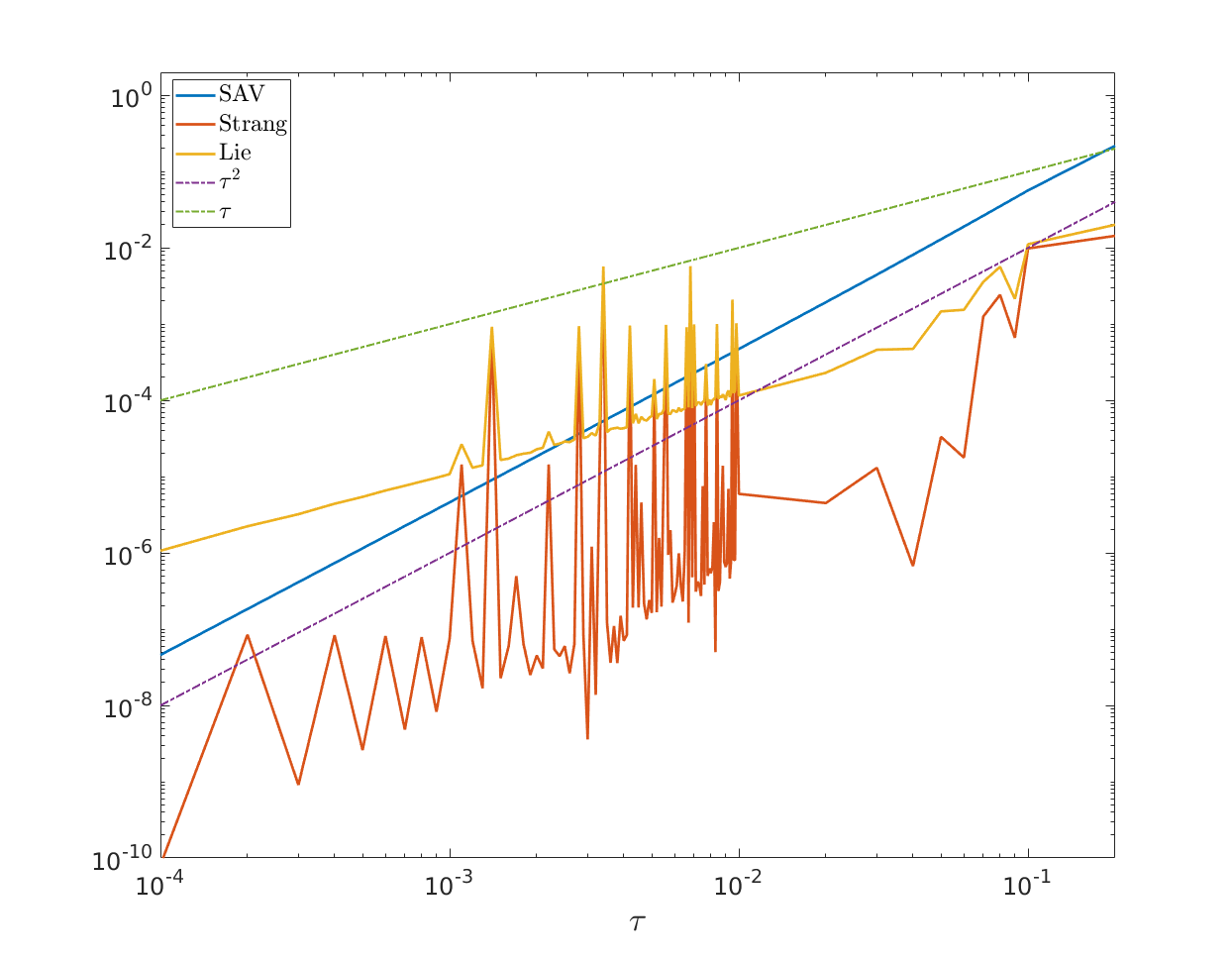} 
    \vspace{4ex}
  \end{minipage}
  \begin{minipage}[b]{0.5\linewidth}
    \centering
    \includegraphics[width=.98\linewidth]{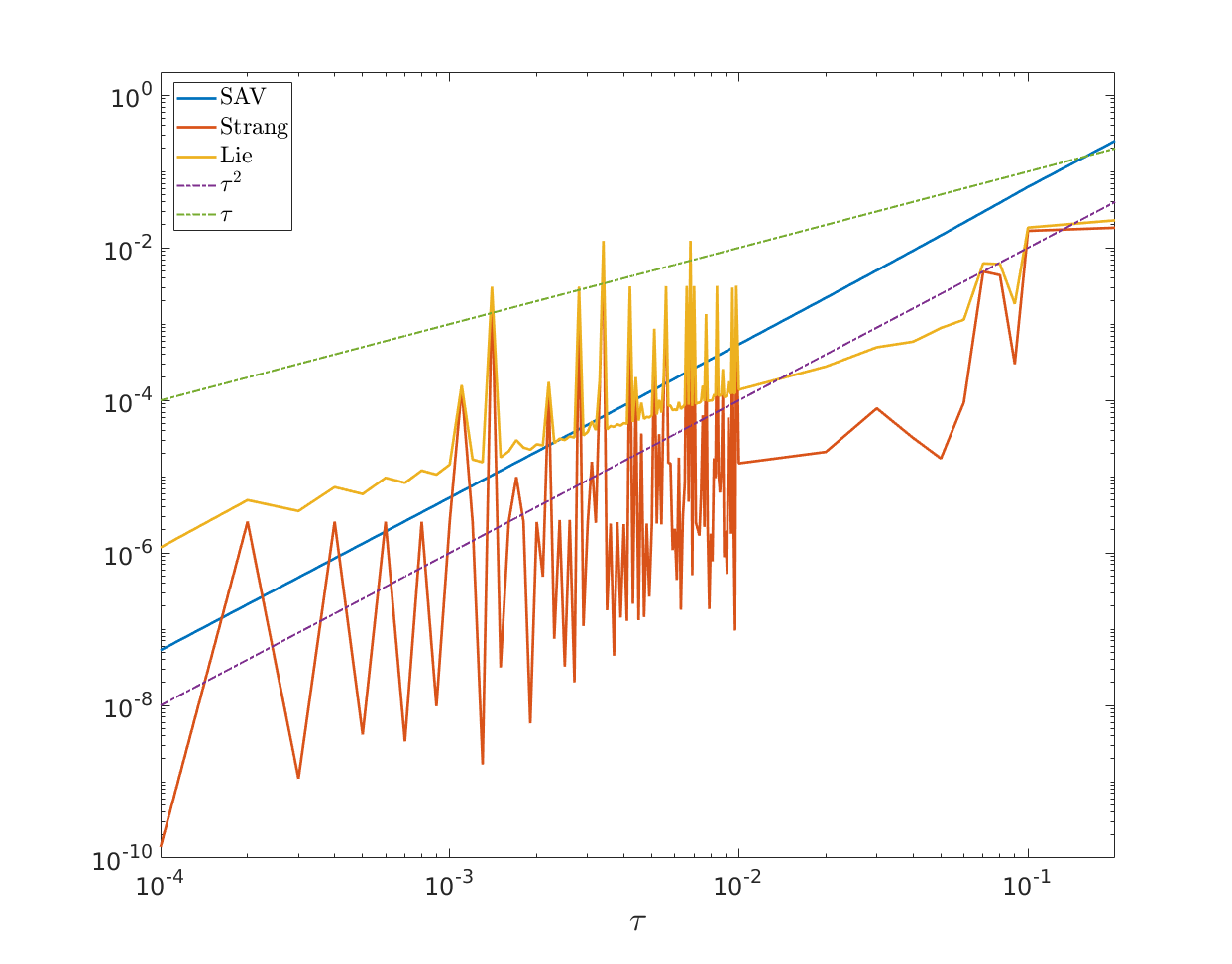} 
    \vspace{4ex}
  \end{minipage} 
  \caption{Error $e_H$ versus step size $\tau$ for the nonlinear Schrödinger equation with different non-integer exponents ($\gamma=2$ top-left, $\gamma=\f83$ top-right, $\gamma=4$ bottom left and $\gamma=8$ bottom-right). The dotted lines represent order $\tau$ (green) and $\tau^2$ (purple), respectively. }
   \label{fig:order-exponent} 
\end{figure}

\subsection{Computing ground states}
\AP{We use the SAV scheme to simulate ground states of Bose-Einstein condensates, however, unlike the work of Antoine \textit{et al.}~\cite{antoine-SAV-2020}, we propose here to use a different strategy. Indeed, in~\cite{antoine-SAV-2020}, the authors use the SAV scheme presented in Section~\ref{sec:schemes} and observe the capacity of the scheme to preserve the initial mass and Hamiltonian for various strengths of the nonlinearity. In the present work, we propose to use a different method and compare the numerical results with reference methods that are designed to simulate the stationary states of the NLS equation for large nonlinearities.
}

\AP{
To do so, we reformulate the problem into the solving of a gradient flow equation to compute these stationary states: this method is known has the \textit{gradient flow with discrete normalization} method~\cite{bao-ground, Bao-normalized}. }

\AP{
The SAV scheme is well adapted to this formulation since its original purpose was the simulation of the gradient flow equations. Details of the reformulation and the adaptation of the SAV scheme to the case can be found in Appendix~\ref{app:reformulation}. 
\begin{sloppypar}
We here present numerical results obtained choosing $d=1$, $V(x) = x^2/2$, $\beta = 400$, and ${u^0(x) = \f{\exp(-x^2/2)}{\pi^{1/4}} /\norm{u^0}_0}$. We validate our results with the Backward Euler PseudoSpectral (BEPS in short) scheme implemented in the GPELab code~\cite{ANTOINE20142969,ANTOINE201595}. 
\end{sloppypar}
We denote by $\cae(u)$ the energy associated to the renormalized system and $\tilde \cae(u)$ its modified SAV energy (see Appendix~\ref{app:reformulation} for details).
}

\AP{
Figure~\ref{fig:ground-state} (left) compares the stationary states obtained with the two schemes for $h=1/8$ in space. We clearly see that they both reach the same steady state. Figure~\ref{fig:ground-state} (right) depicts the evolution of the energy during the simulation. We observe that the SAV scheme preserves the monotonic decay of the energy. The steady state reached at the end of the simulation has an energy $\tilde \cae(\phi)\approx 22.90$. However, using the solution $\phi_g$ obtained for the SAV scheme and computing the "real" energy, we obtain $\cae(\phi)\approx 21.36$ which is the value obtained with the BEPS scheme. 
}

\begin{figure}[h!] 
  \begin{minipage}[b]{0.5\linewidth}
    \centering
    \includegraphics[width=.98\linewidth]{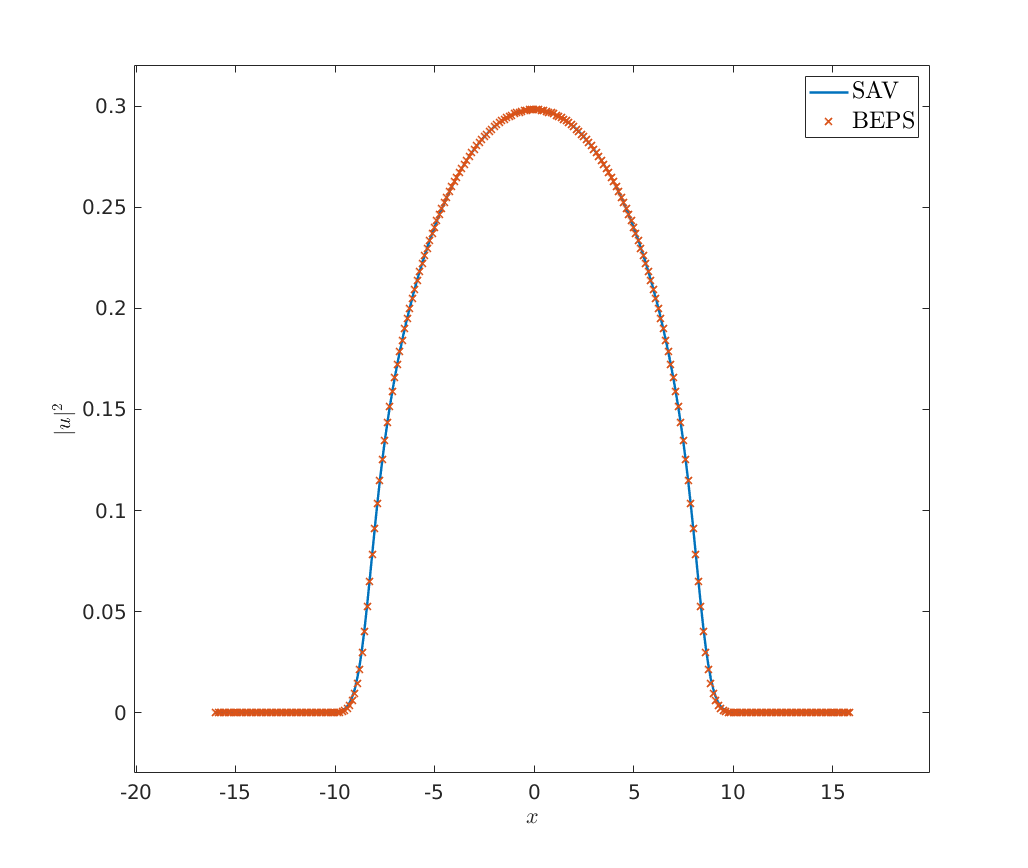} 
    \vspace{4ex}
  \end{minipage}
  \begin{minipage}[b]{0.5\linewidth}
    \centering
    \includegraphics[width=.98\linewidth]{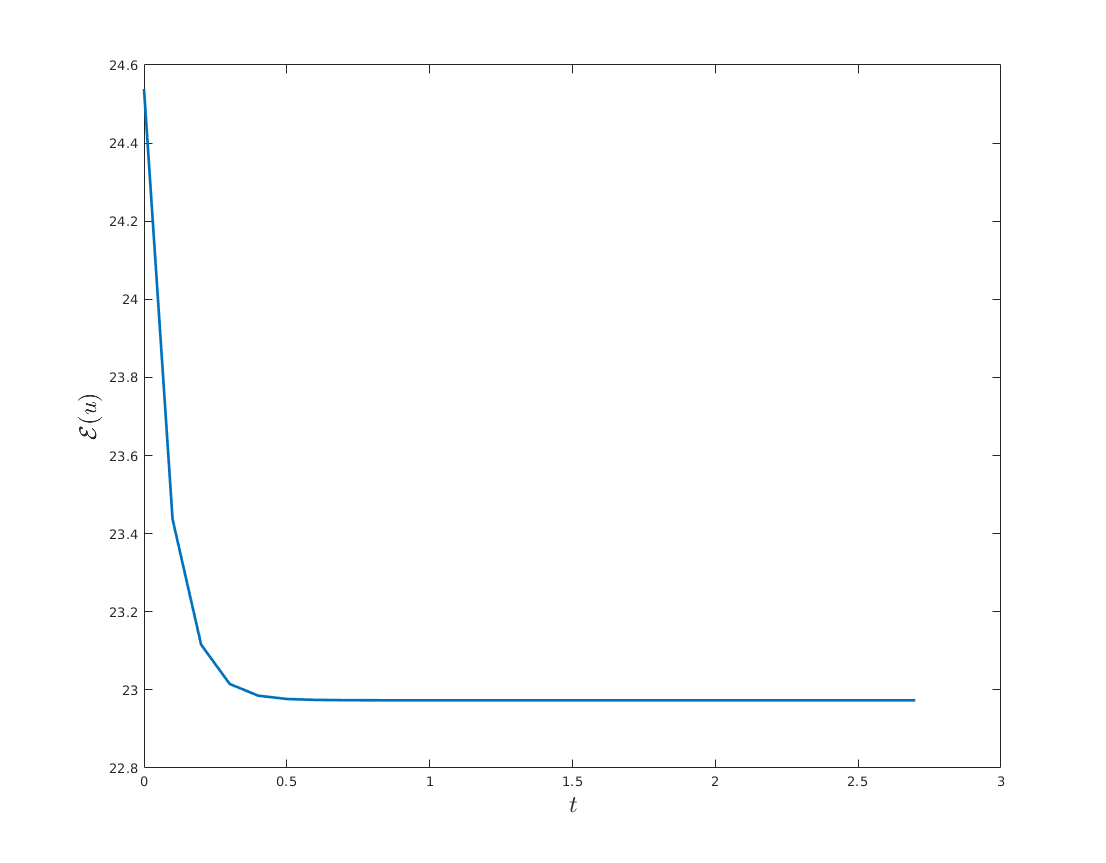} 
    \vspace{4ex}
  \end{minipage} 

  \caption{(Left) Comparison of stationary solutions of the NLS equation with a large cubic nonlinearity obtained with the SAV scheme (blue) and the BEPS scheme from GPELab (red). (Right) Evolution of the energy $\tilde \cae(u)$ for the SAV scheme during the simulation.}
   \label{fig:ground-state} 
\end{figure}

\begin{figure}
    \centering
    \includegraphics[width = 0.5\linewidth]{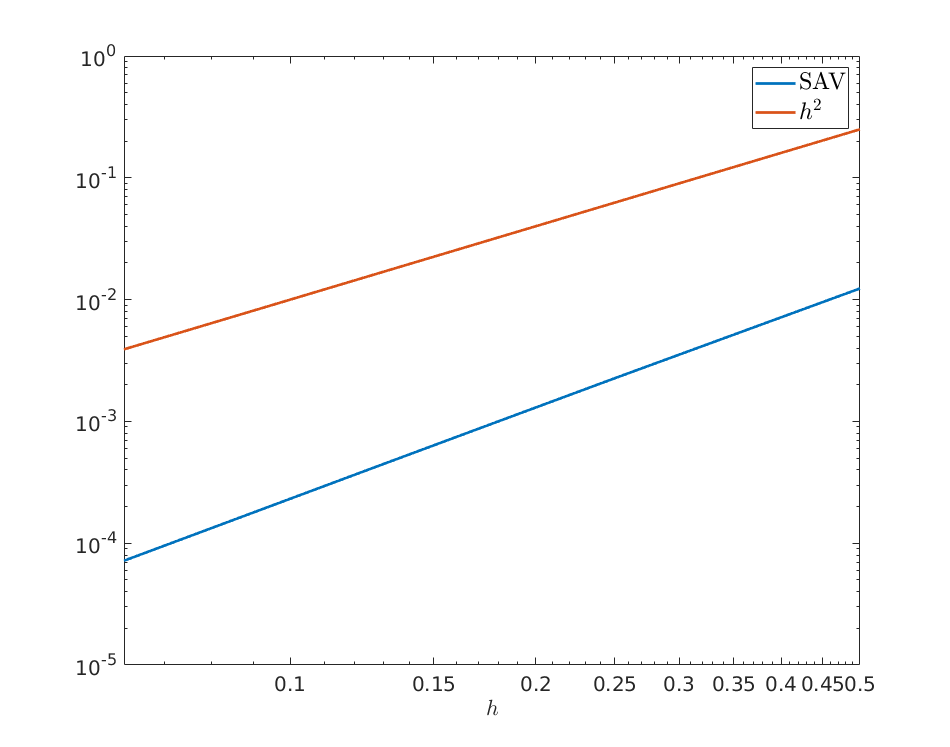}
    \caption{Error $e_u$ versus grid size $h$ for the simulation of ground states with a large cubic nonlinearity.}
    \label{fig:error-ground-state}
\end{figure}

\AP{
We numerically evaluate the order of convergence in space of the SAV scheme for the simulation of ground states. We choose our reference solution to be the result of a simulation with $h=1/32$. Then, we vary $h$ from $1/2$ to $1/16$. 
Figure~\ref{fig:error-ground-state} shows that the scheme remains second order convergent in space as predicted by our error analysis.
}

\appendix
\section{Gradient flow with discrete normalization for computing ground state} \label{app:reformulation}
\AP{A common method to compute stationary states of the NLS equation~\eqref{eq:schro} with a cubic nonlinearity is to write 
\[
    u(t,x) = \phi(x) \exp(-i\mu t),
\]
where $\mu$ is defined as the chemical potential of the condensate
\[
    \mu(\phi) = \int_\Omega\left(\f12 \abs{\nabla \phi} + \beta \abs{\phi}^4 + V(x)\abs{\phi}^2\right) \,\dd x.
\]
Therefore, using the previous reformulation in Equation~\eqref{eq:schro}, we obtain
\[
    \mu \phi(x) = -\f12 \Delta \phi(x) + \beta \abs{\phi(x)}^2\phi(x) + V(x) \phi(x).
\]
Denoting by $S=\{\phi | \norm{\phi}_{L^2(\Omega)} = 1\}$ the unit sphere, the ground state $\phi_g\in S$  of the Bose-Einstein condensate is then defined by the solution minimizing the energy functional  
\[
    \mathcal{E}(\phi) = \int_\Omega\left(\f12 \abs{\nabla \phi} + \f12\beta \abs{\phi}^4 + V(x)\abs{\phi}^2\right) \,\dd x < +\infty.
\]
For the proof of the existence of such state and other mathematical properties we refer the reader to \cite{Bao-Cai}.
}

\AP{
In the following, we adapt the Scalar Auxiliary Variable method to compute the stationary solutions of Equation~\eqref{eq:schro}. Therefore, endowing the equation with the normalization constraint, and using the projected gradient method~\cite{bao-ground}, the complete system reads
\[
\begin{cases}
    \p_t \phi = \f12 \Delta \phi - V(x)u - \beta\abs{u}^2 u(t,x),\\
    \norm{\phi}_{L^2(\Omega)}^2 = 1.
 \end{cases}
\]
Our SAV scheme can be easily adapted to this case, leading to the discrete system
\[
\begin{cases}
    \f{\phi^{+}-\phi^k}{\tau} =\f12 D^{(2)} \phi^{k+1/2} - r^{k+1/2}\tilde G^{k+1/2},\\
    r^{k+1}-r^k = \f12\scal{\tilde G^{k+1/2}}{\phi^{+}-\phi^k}\\
    \phi^{k+1} = \f{\phi^+}{\norm{\phi^+}_L^2(\Omega)},
\end{cases}
\]
with $\phi^{k+1/2} = \f{\phi^++\phi^k}{2}$, $\tilde G^{k+1/2}$ a second order approximation of $\f{\delta \cae_1[t^{k+1/2}]}{\delta \phi^{k+1/2}}$.
We precise that the associated modified SAV energy is 
\[
    \tilde \cae(\phi) = \int_\Omega \f12 \abs{\nabla \phi} \,\dd x + r(t)^2 < +\infty.
\]
}

\AP{
Both Algorithm 1 and Algorithm 2 from Section~\ref{sec:schemes} can be applied to compute the solution of the SAV system. Furthermore, using the same calculation as in Section~\ref{sec:apriori}, we can easily prove that the scheme dissipates the energy and preserves the normalization constraint.  }



%
%
%


\bibliographystyle{siam}  \bibliography{biblio}

\end{document}